\documentclass[12pt,reqno]{amsart}

\usepackage[all,cmtip]{xy}
\usepackage{tikz-cd}
\usepackage{enumitem}
\usepackage{parskip}
\usepackage{nicematrix}
\setlength{\textheight}{23.1cm}
\setlength{\textwidth}{16cm}
\setlength{\topmargin}{-0.8cm}
\setlength{\parskip}{0.3\baselineskip}
\hoffset=-1.4cm

\usepackage{amssymb}

\usepackage{hyperref}

\numberwithin{equation}{section}

\newtheorem{theorem}{Theorem}[section]
\newtheorem{corollary}[theorem]{Corollary}
\newtheorem{lemma}[theorem]{Lemma}
\newtheorem{proposition}[theorem]{Proposition}

\theoremstyle{definition}
\newtheorem{definition}[theorem]{Definition}
\newtheorem{example}[theorem]{Example}

\newtheorem{remark}[theorem]{Remark}

\newcommand{\GmodX}[3]{#1 \backslash\mkern-6mu\backslash #2^{ss}_{#1}(#3)}
\newcommand{\opsgmodx}[1]{\GmodX{\lambda_{s}}{X(w_{s,r})}{\mathcal{L}(#1\omega_{r})}}

\begin{document}	
	\title{ GIT quotient of Schubert varieties modulo one dimensional torus}	
	\author{Arkadev Ghosh }
	\address{Chennai Mathematical Institute, Plot H1, SIPCOT IT Park, Siruseri, Kelambakkam, 603103, India.}
	\email{arkadev@cmi.ac.in}	
	\author{S. S. Kannan}
	\address{Chennai Mathematical Institute, Plot H1, SIPCOT IT Park, Siruseri, Kelambakkam, 603103, India.}	
	\email{kannan@cmi.ac.in}	
	\subjclass[2010]{14M15, 14L35, 14F25}	
	\keywords{Semi-stable points, minuscule fundamental weights, Schubert variety}				
	\maketitle 		
	\begin{center}
		To the memory of C. S. Seshadri
	\end{center}
	\begin{abstract}
		Let $G$ be a simple algebraic group of adjoint type of rank $n$ over $\mathbb{C}$. Let $T$ be a maximal torus of $G$, and $B$ be a Borel subgroup of $G$ containing $T$. Let $W=N_{G}(T)/T$ be the Weyl group of $G$. Let $S=\{\alpha_{1},\ldots,\alpha_{n}\}$ be the set of simple roots of $G$ relative to $(B,T)$. Let $\lambda_{s}$ be the one parameter subgroup of $T$ dual to $\alpha_{s}$. In this paper, we give a criterion for Schubert varieties admitting semistable points for the $\lambda_{s}$-linearized line bundles $\mathcal{L}(\chi)$ associated to every dominant character $\chi$ of $T$. If $\omega_{r}$ is a minuscule fundamental weight and $m\omega_{r}\in X(T)$, then we prove that there is a unique minimal dimensional Schubert variety $X(w_{s,r})$ in $G/P_{S\setminus\{\alpha_{r}\}}$ such that $X(w_{s,r})^{ss}_{\lambda_{s}}(\mathcal{L}(m\omega_{r}))\neq \phi$. Further, we prove that if $G=PSL(n,\mathbb{C})$, and $n\nmid rs$, $m=\frac{n}{(rs,n)}$, and $p=\lfloor\frac{rs}{n}\rfloor$ then $\GmodX{\lambda_{s}}{X(w_{s,r})}{\mathcal{L}(m\omega_{r})}\simeq (\mathbb{P}(M(s-p,r-p)),\mathbb{O}(a))$ for some $a\in\mathbb{N}$.
	\end{abstract}		
	\section{Introduction}
	Let $G$ be a simple  algebraic group of adjoint type of rank $n$ over $\mathbb{C}$.  Let $T$ be a maximal torus of $G$ and $B$ be a Borel subgroup of $G$ containing $T$. Let $\pi: \hat{G}\rightarrow G$ be the simply connected covering. Let $\hat{T}=\pi^{-1}(T)$, $\hat{B}=\pi^{-1}(B)$. Let $W\,=\,N_{G}(T)/T$ denote the 
	Weyl group of $G$ with respect to $T$.  The Lie algebra of $G$ will be denoted by  $\mathfrak{g}$.
	Let $\mathfrak{b}\subset \mathfrak{g}$ and $\mathfrak{h}\subset \mathfrak{b}$ be the Lie algebras of $B$ and
	$T$ respectively. The set of roots of $G$ with respect to $T$ will be denoted by $R$. Let $R^{+}\subseteq R$ be the set of positive roots with respect to $B$. Let $S=\{\alpha_{1},\ldots,\alpha_{n}\}\subseteq R^{+}$ be the set of simple roots with respect to $B$. Let $\{\omega_{r}:1\leq r\leq n\}$ be the fundamental weights associated to $S$. \\
	In \cite{Hausmann}, Hausmann and Knutson identified the GIT quotient of the Grassmannian $G_{2,n}$ by $T$ with the moduli space of polygons in $\mathbb{R}^{3}$. Also, they showed that GIT quotient of $G_{2,n}$ by $T$ can be realized as the GIT quotient of an $n$-fold product of projective lines by the diagonal action of $PSL(2,\mathbb{C})$. More generally, using Gel'fand - Macpherson correspondence GIT quotient of $G_{r.n}$ by $T$ can be identified with the GIT  quotient of $(\mathbb{P}^{r-1})^{n}$ by the diagonal action of $PSL(r,\mathbb{C})$.   \\	
	In \cite{Skorobogatov}, Skorobogatov gave a combinatorial description of set of semistable (resp. stable) points in $G_{r,n}$ with respect to $T$-linearized line bundle $\mathcal{L}(\omega_{r})$. As a corollary he showed that when $r$ and $n$ are coprime, semistability is same as stability.
	In \cite{Torus quptient I} and \cite{Torus quotient II}, second named author characterized the parabolic subgroups $P$  of a simple algebraic group $G$ containing $B$ for which there exists an ample line bundle $\mathcal{L}$ on $G/P$ such that $(G/P)^{ss}_{T}(\mathcal{L})=(G/P)^{s}_{T}(\mathcal{L})$. He also independently proved that $(G_{r,n})_{T}^{s}(\mathcal{L}(\omega_{r}))=(G_{r,n})_{T}^{ss}(\mathcal{L}(\omega_{r}))$ if and only if $r$ and $n$ are coprime.
	In \cite{s kannan}, second named author and Sardar studied torus quotient of Schubert varieties in $G_{r,n}$.
	They showed that $G_{r,n}$ has a unique minimal dimensional Schubert variety $X(w)$ admitting
	semistable points with respect to the $T$-lineararized bundle $\mathcal{L}(\omega_{r})$, and gave a combinatorial
	characterization of the Weyl group element $w$. In \cite{KP}, second named author and Pattanayak
	extended the results to the case when $G$ is of type $B$, $C$, or $D$ and when $P$ is a maximal
	parabolic subgroup of $G$.\\		
	In view of the study of all above authors, it is a natural problem to study the GIT quotients of $G_{r,n}$ with respect to $H$-linearized line bundle $\mathcal{L}(\omega_{r})$ for any subtorus $H$ of $T$. Let $X(T)$ (respectively, $Y(T)$) be the group of characters (respectively, group of one parameter subgroups of $T$) and let $\langle \cdot,\cdot\rangle: X(T)\times Y(T)\rightarrow \mathbb{Z}$ be the canonical pairing.
	For $1\leq i\leq n$, let $\lambda_{i}\in X(T)$ be the one parameter subgroup of $T$ dual to $\alpha_{i}$. That is, $\langle\alpha_{j},\lambda_{i}\rangle=\delta_{i,j}$ for all $1\leq i,j\leq n$.\\
	In \cite{Torus quptient I}, second named author used one parameter subgroup $\lambda_{s}$ to prove that $(G_{r,n})^{ss}_{T}(\mathcal{L}(\omega_{r}))=(G_{r,n})^{s}_{T}(\mathcal{L}(\omega_{r}))$ if and only if $(r,n)=1$ (see \cite[Theorem 3.3]{Torus quptient I}). In  \cite{Torus quotient II}, author used $\lambda_{s}$ in the computation to prove that, $G$ is of type different from $A_{n}$, if $Q$ is a parabolic subgroup of $G$ containing $B$, then there exists an ample line bundle $\mathcal{L}$ such that $(G/Q)^{ss}_{T}(\mathcal{L})=(G/Q)^{s}_{T}(\mathcal{L})$ if and only if $Q=B$. In  \cite{KP}, authors used one parameter subgroups $\lambda_{s}$ to prove that there is a unique minimal dimensional Schubert variety $X(w)$ in a minuscule flag variety admitting semistable points with respect to the $T$-linearized line bundle $\mathcal{L}(\omega_{r})$. In \cite{s kannan},  authors used $\lambda_{s}$ to describe all minimal dimensional Schubert variety admitting semistable point with respect to $T$-linearized line bundle $\mathcal{L}(\omega_{r})$ on $G/P_{S\setminus\{\alpha_{r}\}}$ for $G$ is of type $B$, $C$, $D$ and all maximal parabolic subgroups $P_{S\setminus\{\alpha_{r}\}}$.
	
	 So, it is a natural problem to study $\GmodX{\lambda_{s}}{(G/P)}{\mathcal{L}}$ for any $G,P,\mathcal{L}$ as above. This paper is an attempt to solve the problems for the case of the one parameter subgroup $\lambda_{s}$ that are analogous to the case of $T$. For instance, it is showed in \cite{Torus quotient II} that for any semisimple algebraic group $G$ and a parabolic subgroup $P$ of $G$ containing $B$, and an ample line bundle $\mathcal{L}(\chi)$ on $G/P$ associated to a dominant character $\chi$ of $T$, $(G/P)^{ss}_{T}(\mathcal{L}(\chi))=(G/P)^{s}_{T}(\mathcal{L}(\chi))$ if and only if $\langle w(\chi),\lambda_{s}\rangle\neq 0$ for all $w\in W$ and all $1\leq s\leq n$ (see \cite[Lemma 4.1, p.38]{Torus quotient II}). In this paper, for any $1\leq s\leq n$, we prove that $(G/P)^{ss}_{\lambda_{s}}(\mathcal{L}(\chi))=(G/P)^{s}_{\lambda_{s}}(\mathcal{L}(\chi))$ if and only if $\langle w(\chi),\lambda_{s}\rangle\neq 0$ for all $w\in W$ (see Corollary \ref{ss=s for G/P I}). In \cite{s kannan}, it is proved that for $G=PSL(n,\mathbb{C})$ and $1\leq r\leq n-1$, there is a unique minimal dimensional Schuert variety in $G_{r,n}$ admitting semistable point for the $T$-linearized line bundle $\mathcal{L}(n\omega_{r})$ (see \cite[Lemma 2.7, p.88]{s kannan}).
	In view of this result, there is a natural question : for any $1\leq s\leq n$ and for any minuscule fundamental weight $\omega_{r}$, is there a unique minimal dimensional Schubert variety $X(w_{s,r})\subseteq G/P_{S\setminus \{\alpha_{r}\}}$ admitting semistable point for $\lambda_{s}$? In this paper, for any minuscule fundamental weight $\omega_{r}$ and $m\in \mathbb{N}$ is such that $m\omega_{r}$ is in the root lattice, we prove that there is a unique minimal dimensional Schubert variety $X(w_{s,r})$ admitting semistable point for the $\lambda_{s}$-linearized line bundle $\mathcal{L}(m\omega_{r})$. It is proved in \cite{KP} that for any $G,P,\mathcal{L}(\chi)$ as above and a Schubert variety $X(w)\subseteq G/P$, $X(w)^{ss}_{T}(\mathcal{L}(\chi))\neq\phi$ if and only if $w(\chi)\leq 0$. In this paper, we prove the following :   $X(w)^{ss}_{\lambda_{s}}(\mathcal{L}(\chi))\neq\phi$ if and only if $\langle w(\chi),\lambda_{s}\rangle\leq0$ (see Lemma \ref{main lemma}).	
	
	Let $\omega_{r}$ be a minuscule fundamental weight.
	Let $m\in \mathbb{N}$ be a positive integer such that $m\omega_{r}$ is in the root lattice. In this paper we prove the following results.
	\begin{lemma} $($see Corollary \ref{qt is point, special case} $)$	If $X(w_{s,r})^{ss}_{\lambda_{s}}(\mathcal{L}(m\omega_{r}))\neq X(w_{s,r})^{s}_{\lambda_{s}}(\mathcal{L}(m\omega_{r}))$, then the quotient $\opsgmodx{m}$ is a point.
	\end{lemma}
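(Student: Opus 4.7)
The strategy is to exploit the one-dimensionality of $\lambda_s\cong\mathbb{G}_m$ together with the minimality of $w_{s,r}$ to force the quotient to collapse to a single point.

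First, for a $\mathbb{G}_m$-action on a projective variety, the hypothesis $X(w_{s,r})^{ss}_{\lambda_s}(\mathcal{L}(m\omega_r))\ne X(w_{s,r})^{s}_{\lambda_s}(\mathcal{L}(m\omega_r))$ produces a $\lambda_s$-fixed semistable point: either a strictly semistable point has the full $\lambda_s$ as its stabilizer, or its $\lambda_s$-orbit is non-closed in the semistable locus and the orbit closure meets the $\lambda_s$-fixed locus inside $X^{ss}_{\lambda_s}$. Since $T$ centralizes $\lambda_s$, the $\lambda_s$-fixed locus is a $T$-stable closed subvariety; by Borel's fixed point theorem, the connected component meeting $X^{ss}_{\lambda_s}$ contains a $T$-fixed point $e_v$ for some $v\le w_{s,r}$ in the Bruhat order on $W/W_{P_{S\setminus\{\alpha_r\}}}$. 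On this component, the $\lambda_s$-character of $\mathcal{L}(m\omega_r)$ is constant, and its vanishing is necessary for the component to contain a semistable point. Thus $\langle v(m\omega_r),\lambda_s\rangle=0$, and combining Lemma \ref{main lemma} with the minimality of $w_{s,r}$ (which forces $\langle v(m\omega_r),\lambda_s\rangle>0$ for every $v<w_{s,r}$) forces $v=w_{s,r}$ and
\[
\langle w_{s,r}(m\omega_r),\lambda_s\rangle=0.
\]

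Second, I use this vanishing to analyze the invariant ring $R=\bigoplus_{k\ge 0}H^0(X(w_{s,r}),\mathcal{L}(km\omega_r))^{\lambda_s}$. Every $T$-weight $\mu=km\omega_r-\sum_i c_i\alpha_i$ of the Demazure module $H^0(X(w_{s,r}),\mathcal{L}(km\omega_r))^{*}$ pairs with $\lambda_s$ to give $km\langle\omega_r,\lambda_s\rangle-c_s$, which is non-negative and vanishes precisely when $c_s=km\langle\omega_r,\lambda_s\rangle$. The plan is to show the $\lambda_s$-weight-zero subspace is one-dimensional in each degree, spanned by the extremal weight vector at $w_{s,r}(km\omega_r)$. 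Granting this, $R$ is one-dimensional in every relevant graded piece, hence is a polynomial ring in one generator, so $\opsgmodx{m}=\mathrm{Proj}(R)$ is a single point.

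The main obstacle is the one-dimensionality of the $\lambda_s$-weight-zero subspace. For $k=1$, minusculeness of $\omega_r$ makes the weights of $V_{w_{s,r}}(m\omega_r)$ exactly $\{v(m\omega_r):v\le w_{s,r}\}$, each with multiplicity one, so the strict positivity $\langle v(m\omega_r),\lambda_s\rangle>0$ for $v<w_{s,r}$ immediately gives the claim. For $k>1$ the weight multiplicities in the Demazure module can exceed one, and ruling out additional weights $\mu$ of $V_{w_{s,r}}(km\omega_r)$ with $\langle\mu,\lambda_s\rangle=0$ will have to proceed via standard monomial theory for Demazure modules, essentially using the hypothesis that $m\omega_r$ lies in the root lattice to rigidify the set of allowed coefficients $c_i$.
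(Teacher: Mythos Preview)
Your overall strategy matches the paper's: first deduce $\langle w_{s,r}(m\omega_r),\lambda_s\rangle=0$, then show that the $\lambda_s$-invariant ring is one-dimensional in each degree. But there is a genuine gap in the execution.

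Your first paragraph is correct but needlessly elaborate: once you know $X(w_{s,r})^{ss}_{\lambda_s}\ne X(w_{s,r})^{s}_{\lambda_s}$, Lemma~\ref{ss=s,min}(ii) gives $\langle w_{s,r}(\omega_r),\lambda_s\rangle=0$ directly, with no need for Borel fixed points or orbit-closure arguments.

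The real problem is your weight analysis. Your claim for $k=1$ that ``minusculeness of $\omega_r$ makes the weights of $V_{w_{s,r}}(m\omega_r)$ exactly $\{v(m\omega_r):v\le w_{s,r}\}$, each with multiplicity one'' is false whenever $m>1$: the representation with highest weight $m\omega_r$ is \emph{not} minuscule, and the Demazure module has many non-extremal weights with higher multiplicities. So your $k=1$ case is already broken, and for $k>1$ you only gesture at standard monomial theory without carrying anything out.

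What you are missing is a convexity argument that handles all degrees uniformly. Every $T$-weight $\mu$ of $H^0(X(w),\mathcal{L}(d\chi))$ (here $\chi=m\omega_r$, $w=w_{s,r}$) lies in the rational convex hull of the extremal weights $\{-\tau(d\chi):\tau\le w\}$; this is what standard monomial theory (or the Demazure character formula) actually gives you. Writing $-\mu=\sum_i c_i\,\tau_i(d\chi)$ with $c_i\in\mathbb{Q}_{\ge 0}$, $\sum c_i=1$, $\tau_i\le w$, and pairing with $\lambda_s$, the minimality of $w$ forces $\langle\tau_i(\chi),\lambda_s\rangle>0$ for every $\tau_i<w$, while $\langle w(\chi),\lambda_s\rangle=0$. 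Hence $\langle\mu,\lambda_s\rangle=0$ forces all $c_i$ with $\tau_i<w$ to vanish, so $\mu=-w(d\chi)$. That weight is extremal, hence its weight space is one-dimensional, spanned by $p_w^d$. Therefore $\bigoplus_{d\ge 0}H^0(X(w),\mathcal{L}(d\chi))^{\lambda_s}=\mathbb{C}[p_w]$ and the quotient is a point. This is exactly the paper's Lemma~\ref{quotient is point}; your proposal circles around it but never lands on the convexity step that makes it work.
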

	\begin{proposition}$($see Proposition \ref{quotient is projective space}$)$
		If $\alpha_{s}$ is cominuscule, and $X(w_{s,r})^{ss}_{\lambda_{s}}(\mathcal{L}(m\omega_{r}))=X(w_{s,r})^{s}_{\lambda_{s}}(\mathcal{L}(m\omega_{r}))$, then $\opsgmodx{m}$
		is a projective space.
	\end{proposition}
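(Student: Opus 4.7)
The plan is to combine the cominuscule hypothesis on $\alpha_s$ with a Bia{\l}ynicki--Birula style analysis of the $\lambda_s$-action on $X(w_{s,r})$. The cominuscule condition gives $\langle \beta, \lambda_s\rangle \in \{-1, 0, 1\}$ for every root $\beta$, yielding the three-term grading $\mathfrak{g} = \mathfrak{g}_{-1} \oplus \mathfrak{g}_0 \oplus \mathfrak{g}_1$ with $\mathfrak{g}_{\pm 1}$ abelian. In particular, $\lambda_s$ acts on the tangent space of $G/P_{S \setminus \{\alpha_r\}}$ at every $T$-fixed point with weights in $\{-1, 0, 1\}$, and for each $v \leq w_{s,r}$ the $\lambda_s$-weight $\langle v(m\omega_r), \lambda_s\rangle$ of the corresponding extremal weight vector in $H^0(X(w_{s,r}), \mathcal{L}(m\omega_r))$ differs from $\langle m\omega_r, \lambda_s\rangle$ by a non-negative integer multiple of $m$ (since for minuscule $\omega_r$ the difference $\omega_r - v(\omega_r)$ is a non-negative integral combination of simple roots).

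Next I catalogue the $T$-fixed points of $X(w_{s,r})$ together with their $\lambda_s$-weights. Lemma \ref{main lemma} gives $\langle w_{s,r}(m\omega_r), \lambda_s\rangle \leq 0$, and the hypothesis that semistability equals stability forces strict inequality, for otherwise the $T$-fixed point $w_{s,r}P$ would be properly semistable. Minimality of $X(w_{s,r})$ then forces $\langle v(m\omega_r), \lambda_s\rangle > 0$ for every $v < w_{s,r}$, since otherwise $X(v)$ would already carry a semistable point and contradict the unique-minimal-dimensional-Schubert-variety property established earlier.

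With this weight structure in hand, Hilbert--Mumford identifies the semistable locus as the set of points whose $\lambda_s$-limit as $t \to 0$ lies in the unique negative fixed component and whose limit as $t \to \infty$ lies in a positive component; the Bia{\l}ynicki--Birula decomposition then refines this to a single open dense affine stratum. The cominuscule assumption on $\alpha_s$ is what forces the tangent weights on the attracting BB stratum at $w_{s,r}P$ to be all of one sign and of the same absolute value, so this stratum is $\lambda_s$-equivariantly isomorphic to an affine space on which $\lambda_s$ acts by a single scalar weight. The geometric quotient $\opsgmodx{m}$ of the stable locus, which is the complement of the origin in this affine space under the linear $\lambda_s$-action, is then a projective space, proving the proposition. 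The main obstacle is verifying that the attracting BB stratum at $w_{s,r}P$ really does have a single uniform tangent weight under $\lambda_s$; this requires an explicit tangent computation at $w_{s,r}P$ using the cominuscule structure furnished by $\alpha_s$ together with an identification of exactly which positive roots contribute to the tangent space of $X(w_{s,r})$ there, which I expect to carry out combinatorially via the minuscule Weyl orbit of $\omega_r$.
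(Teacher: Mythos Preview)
Your approach is essentially the paper's, recast in Bia\l ynicki--Birula language: both reduce to the big Bruhat cell $Bw_{s,r}P/P \cong U_{w} = \prod_{\beta \in R^{+}(w_{s,r}^{-1})} U_{\beta}$ (this is exactly your ``attracting stratum at $w_{s,r}P$''), observe that $\lambda_{s}$ acts there by uniform scaling, and conclude that the quotient is projective space. The obstacle you single out---that every $\beta \in R^{+}(w_{s,r}^{-1})$ satisfies $\langle \beta, \lambda_{s}\rangle = 1$---is precisely the crux, but the paper does not resolve it via the minuscule orbit of $\omega_{r}$ as you propose. Instead it uses Corollary~\ref{unique pick}: minimality of $w_{s,r}$ forces $w_{s,r}^{-1} \in W^{S\setminus\{\alpha_{s}\}}$ (if $w_{s,r}^{-1}(\alpha_{q})<0$ for some $q\neq s$, then $X(s_{q}w_{s,r})$ would already admit semistable points), so every $\beta \in R^{+}(w_{s,r}^{-1})$ has $\alpha_{s}$ in its support, and the cominuscule hypothesis then pins the coefficient of $\alpha_{s}$ to $1$. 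This is a two-line argument once Corollary~\ref{unique pick} is in hand, and you should use it rather than a case analysis of minuscule weights.

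The paper then finishes more algebraically than you propose: it pulls back the extremal-weight sections $p_{s_{\beta}w}$ and $p_{w}$ to the coordinate functions $X_{\beta}$ and a nonzero constant on $U_{w}$ respectively (Lemma~\ref{restriction of line bundle to cell}), checks that $p_{s_{\beta}w}^{a_{1}} p_{w}^{a_{2}}$ is $\lambda_{s}$-invariant for $a_{1} = -\langle w(m\omega_{r}), \lambda_{s}\rangle$ and $a_{2} = m - a_{1}$, and identifies $\bigoplus_{d} H^{0}(X(w), \mathcal{L}(dm\omega_{r}))^{\lambda_{s}}$ with $\bigoplus_{d} Sym^{da_{1}}(V)$ where $V = \bigoplus_{\beta} \mathbb{C} X_{\beta}$. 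Taking $\mathrm{Proj}$ gives $(\mathbb{P}^{k-1},\mathcal{O}(a_{1}))$; in particular the polarization comes out of the computation, whereas your geometric picture would have to recover it separately.
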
	
	Since $C_{G}(T)=T$ and $N_{G}(T)/T$ is finite, there is no natural action of a positive dimensional connected reductive group arising from a subgroup of $G$ on $\displaystyle\bigoplus_{d\in \mathbb{Z}_{\geq 0}}H^{0}(G/P,\mathcal{L}(d\chi))^{T}$. The advantage for $\lambda_{s}$ is that the reductive group $C_{G}(\lambda_{s})$ acts on $\displaystyle\bigoplus_{d\in \mathbb{Z}_{\geq 0}}H^{0}(G/P,\mathcal{L}(d\chi))^{\lambda_{s}}$ naturally. Consider the induced action of $C_{G}(\lambda_{s})$ on $\GmodX{\lambda_{s}}{(G/P)}{\mathcal{L}(\chi)}$. Therefore, it is an interesting problem to study $\GmodX{\lambda_{s}}{(G/P)}{\mathcal{L}(\chi)}$ as a $C_{G}(\lambda_{s})$-variety. For the special case of $\hat{G}=SL(n,\mathbb{C})$, and for $1\leq r,s\leq n-1$, let $w=w_{s,r}$ and $P_{w}$ be the stabilizer of $X(w)$ in $\hat{G}$.  Let $c=\frac{rs}{(rs,n)}$,  $m=\frac{n}{(rs,n)}$, $p=\lfloor\frac{rs}{n}\rfloor$ and $a=c-mp$.  Assume that $X(w)^{ss}_{\lambda_{s}}(\mathcal{L}(m\omega_{r}))=X(w)^{s}_{\lambda_{s}}(\mathcal{L}(m\omega_{r}))$. In this case, the semisimple part of the Levi subgroup of $C_{P_{w}}(n\cdot\lambda_{s})$ is $SL(s-p,\mathbb{C})\times SL(r-p,\mathbb{C})$. So, the natural problem is to study 
	\begin{enumerate}[label=(\roman*)]
		\item the $SL(s-p,\mathbb{C})\times SL(r-p,\mathbb{C})$-module $H^{0}(X(w),\mathcal{L}(dm\omega_{r}))^{\lambda_{s}}$ for any  $d\in \mathbb{Z}_{\geq 0}$ 
		\item to describe $\opsgmodx{m}$ as a $SL(s-p,\mathbb{C})\times SL(r-p,\mathbb{C})$-variety.
	\end{enumerate}	In this direction, we prove the following (see Theorem \ref{git quotient type An}).
	\begin{theorem}
		$\opsgmodx{m}=(\mathbb{P}(M(s-p,r-p)),\mathbb{O}(a))$, where $M(s-p,r-p)$ is the vector space of all $s-p\times r-p$ matrices with entries in $\mathbb{C}$.\label{main result}
	\end{theorem}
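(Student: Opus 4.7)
The plan is to reduce to a smaller Grassmannian naturally isomorphic to $X(w_{s,r})$, realise $M(s-p,r-p)$ as an affine chart in it, and match the graded ring of $\lambda_s$-invariant sections with the homogeneous coordinate ring of $\mathbb{P}(M(s-p,r-p))$ with respect to $\mathcal{O}(a)$.

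\emph{Step 1: Realising $X(w_{s,r})$ as a Grassmannian.} In type $A_{n-1}$ the flag variety $G/P_{S\setminus\{\alpha_r\}}$ is $G_{r,n}$ and Schubert varieties are indexed by $r$-subsets of $\{1,\ldots,n\}$. Using Lemma \ref{main lemma} and the minimal-dimension characterisation of $w_{s,r}$, I would identify $w_{s,r}$ with the subset $I_{s,r}=\{1,\ldots,p\}\cup\{s+1,\ldots,s+r-p\}$, giving
\[
X(w_{s,r})=\{V\in G_{r,n}:\mathrm{span}(e_1,\ldots,e_p)\subseteq V\subseteq \mathrm{span}(e_1,\ldots,e_{s+r-p})\}.
\]
Quotienting $V$ by $\mathrm{span}(e_1,\ldots,e_p)$ induces an isomorphism $X(w_{s,r})\cong G(r-p,A\oplus B)$, where $A:=\mathrm{span}(\bar e_{p+1},\ldots,\bar e_s)$ has dimension $s-p$ and $B:=\mathrm{span}(\bar e_{s+1},\ldots,\bar e_{s+r-p})$ has dimension $r-p$. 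Under this identification, $n\lambda_s$ acts on $A$ with weight $n-s$ and on $B$ with weight $-s$.

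\emph{Step 2: The open chart and its $\lambda_s$-action.} Inside $G(r-p,A\oplus B)$, the open cell $U=\{V'\colon V'\cap A=0\}$ is identified with $\mathrm{Hom}(B,A)=M(s-p,r-p)$ through the graph map $f\mapsto\{w+f(w):w\in B\}$. A direct computation shows $\lambda_s(u)$ sends the graph of $f$ to the graph of $u^n f$, so $\lambda_s$ acts on $U\cong M(s-p,r-p)$ by scaling with weight $n$. On Plücker coordinates this reads as follows: for $I=[1,p]\cup J$ with $J\subseteq A\sqcup B$ of size $r-p$, the coordinate $p_I$ restricts on $U$ to a signed $|J\cap A|\times|J\cap A|$ minor of $f$.

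\emph{Step 3: Matching graded rings.} The standard monomial basis of $\bigoplus_{d\geq 0}H^0(X(w_{s,r}),\mathcal L(dm\omega_r))$ consists of products $\prod_j p_{I_j}$, $I_j=[1,p]\cup J_j$, $|J_j|=r-p$. Using $c=rs/(rs,n)$, $m=n/(rs,n)$ and $a=c-mp$, a direct weight calculation shows that the condition for $\prod_j p_{I_j}$ to be $\lambda_s$-invariant reduces precisely to $\sum_j|J_j\cap A|=da$; by Step 2, restriction to $U$ then yields a homogeneous polynomial of degree $da$ in the matrix entries of $f$. This produces a graded ring homomorphism
\[
\rho\colon\bigoplus_{d\geq 0}H^0(X(w_{s,r}),\mathcal L(dm\omega_r))^{\lambda_s}\longrightarrow\bigoplus_{d\geq 0}\mathrm{Sym}^{da}\bigl(M(s-p,r-p)^*\bigr).
\]
Injectivity follows from density of $U$ in $X(w_{s,r})$. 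For surjectivity, one observes that the distinguished coordinate $p_{[1,p]\cup B}$ restricts to $1$ on $U$, while the size-one minors $f_{ij}$ arise as $p_I\big|_U$ for suitable $I$; multiplying any degree-$da$ monomial in the $f_{ij}$ by the requisite power of $p_{[1,p]\cup B}$ produces a Plücker monomial of total degree $dm$ whose $\lambda_s$-weight is zero and whose restriction to $U$ recovers the given monomial.

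\emph{Step 4: Conclusion.} Taking Proj of the graded rings, $\rho$ induces the required isomorphism $\opsgmodx{m}\cong(\mathbb P(M(s-p,r-p)),\mathcal O(a))$, equivariant for the natural action of the semisimple Levi $SL(s-p,\mathbb{C})\times SL(r-p,\mathbb{C})$ of $C_{P_w}(n\lambda_s)$, which acts on $M(s-p,r-p)=\mathrm{Hom}(B,A)$ by $(g,h)\cdot f=gfh^{-1}$. The principal obstacle is establishing the surjectivity of $\rho$ cleanly in every graded piece; one must verify that the standard-monomial tableau constraints on $G(r-p,A\oplus B)$ admit the preimages constructed above, a combinatorial bookkeeping argument that can be cross-checked against a Hilbert-series calculation using the Cauchy identity for the Grassmannian.
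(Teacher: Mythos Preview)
Your approach is correct and genuinely different from the paper's. The paper proceeds in two stages: first, Proposition~\ref{quotient is projective space} works in abstract root-theoretic language for any minuscule $\omega_r$ and cominuscule $\alpha_s$, identifying the big Bruhat cell $Bw_{s,r}P/P$ with $U_w=\prod_{\beta\in R^+(w^{-1})}U_\beta$, showing $\psi^*(p_{s_\beta w})$ is a nonzero scalar times the coordinate $X_\beta$, and concluding that the invariant ring is $\bigoplus_d\mathbb{C}[U_w]_{da}$, hence the quotient is $(\mathbb{P}^{k-1},\mathcal{O}(a))$ with $k=l(w_{s,r})$. Then, in Theorem~\ref{git quotient type An}, the paper identifies the $SL(s-p)\times SL(r-p)$-module $H^0(\mathbb{P}^{k-1},\mathcal{O}(1))$ indirectly: it argues that neither factor can act trivially (otherwise some semistable point would be fixed), forces the dimension to equal $(s-p)(r-p)$, and pins down the highest weight by exhibiting an explicit invariant monomial. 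By contrast, you use the classical isomorphism $X(w_{s,r})\cong G(r-p,A\oplus B)$ of Schubert varieties with sub-Grassmannians; this makes the big cell equal to $\mathrm{Hom}(B,A)=M(s-p,r-p)$ from the outset, so the $G'$-structure comes for free and no separate representation-theoretic identification is needed. Your route is more direct and geometric for type $A$; the paper's Proposition~\ref{quotient is projective space} has the compensating virtue of applying uniformly to every minuscule/cominuscule pair.

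One remark on your Step~3: the surjectivity you flag as the ``principal obstacle'' is not an obstacle at all. The preimages you construct are simply products of Pl\"ucker coordinates, which are always global sections of the appropriate tensor power of $\mathcal{L}(\omega_r)$ on $X(w_{s,r})$; no tableau constraint needs to be checked, and since $0<a<m$ under the hypothesis $n\nmid rs$, the required power $d(m-a)$ of $p_{[1,p]\cup B}$ is nonnegative. So your graded-ring isomorphism is established without further bookkeeping.
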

		\begin{subsection}{Organization}
		This paper is organized into the following sections. In section 2, we recall some notations and preliminaries on algebraic groups, Lie 
		algebras, standard monomial theory, and Geometric Invariant Theory. In section 3, we prove that a Schubert variety $X(w)$ in $G/P$ admits  semistable point for the action of one parameter subgroup $\lambda_{s}$ if and only if $\langle w(\chi),\lambda_{s}\rangle\leq 0$, where $\chi$ is a dominant character of $P$ (see Lemma \ref{main lemma}). In section 4, we prove that there is unique minimal dimensional Schubert variety $X(w_{s,r})$ admitting semistable points in minuscule $G/P_{S\setminus\{\alpha_{r}\}}$, and is described in section 5. In section 6, we prove that if $\omega_{r}$ is minuscule and $\lambda_{s}$ is cominuscule, then $\opsgmodx{m}$ is either a point or projective space. In section 7, we prove Theorem \ref{main result}.
	\end{subsection} 
	\section{NOTATION AND PRELIMINARIES}
	In this section, we set up some notations and preliminaries. We refer to  \cite{BK05},
	\cite{Hum1}, \cite{Hum2}, \cite{Jan}, \cite{GIT}, and \cite{NEWSTEAD}.
	The simple reflection in $W$ corresponding to $\alpha_{i}$ is denoted by $s_{i}$. For a subset $I$ of $S$, we denote the parabolic subgroup of $G$ generated by $B$ and $\{n_{\alpha}:\alpha\in I\}$ by $P_{I}$, where $n_{\alpha}$ is a representative of $s_{\alpha}$ in $N_{G}(T)$.
	Note that all the standard parabolic subgroups of $G$ containing $B$ are of the form $P_{I}$ for some $I\subseteq S$. Let $W_{I}$ be the subgroup of $W$ generated by $\{s_{\alpha}:\alpha\in I\}$. We note that $W_{I}$ is the Weyl group of $P_{I}$. For $I\subseteq S$, $W^{I}\ =\{w\in W | w(\alpha)\in R^{+},\ $for all $\alpha\in I\}$ is the set of minimal coset representatives of elements of $W/W_{I}$. Further there is a natural order on $W^{I}$, namely the restriction of the Bruhat order on $W$. For $w\in W$, we define $R^{+}(w^{-1})=\{\beta\in R^{+} | w^{-1}(\beta)\in R^{-}\}$. The length of any $w\,\in\, W$ is denoted by $\ell(w)$. Let $w_{0}\in W$ be the longest element of $W$. 	Let $n_{0}\in N_{G}(T)$ be a representative of $w_{0}$. Let $B^{-}:=n_{0}Bn_{0}^{-1}$ be the Borel subgroup of $G$ opposite to $B$ determined by $T$.\\
	
	Let $X(T)$ (respectively, $Y(T))$ denote the group of all characters (respectively, one-parametr subgroups) of $T$. Let $E_{1}:=\ X(T)\otimes \mathbb{R}$, and $E_{2}=\ Y(T)\otimes \mathbb{R}$.\\ Let $\langle.,.\rangle :\ E_{1}\times E_{2}\mapsto\mathbb{R}$ be the canonical non-degenerate form. Let $\overline{C(B)}:=\{\lambda\in E_{2}|\ \langle \alpha,\lambda\rangle\geq0$,\ for all $\alpha\in R^{+}\}$. 
	
	We have $X(T)\otimes \mathbb{R}=Hom_{\mathbb{R}}(\mathfrak{h}_{\mathbb{R}},\mathbb{R})$, the dual of the real form of $\mathfrak{h}$. The positive definite $W$-invariant form on $Hom_{\mathbb{R}}(\mathfrak{h}_{\mathbb{R}},\mathbb{R})$ induced by the Killing form on $\mathfrak{g}$ is denoted by $(-,-)$. For any $\mu\in X(T)\otimes\mathbb{R}$ and $\alpha\in R$, denote \[\langle \mu,\alpha\rangle=\frac{2(\mu,\alpha)}{(\alpha,\alpha)}\]
	There is a natual partial order $\le$ on $X(T)$ defined by $\psi\leq \chi$ if and only if $\chi-\psi$ is a nonnegative integral linear combination of simple roots.
	Let $u_{\alpha}:\mathbb{C}\longrightarrow U_{\alpha}$ be the isomorphism such that $tu_{\alpha}(a)t^{-1}=u_{\alpha}(\alpha(t)a)$, for all $t\in T$, $a\in \mathbb{C}$. \\
	A simple root $\alpha_{i}\in S$ is said to be cominuscule if the coefficient of $\alpha_{i}$ in the expression of highest root is $1$. We call $\lambda_{i}$ to be cominuscule, if $\alpha_{i}$ is cominuscule simple root.\\
	A fundamental weight $\omega$ is said to be minuscule if $\omega$ satisfies $\langle \omega, \beta  \rangle\le  1$ for all $\beta \in R^{+}.$\\
	If $\omega_{i}$ is a minuscule fundamental weight corresponding to the simple root $\alpha_{i},$ then the standard parabolic subgroup $P_{S\setminus\{\alpha_{i}\}}$ of $G$ corresponding to the subset $S\setminus\{\alpha_{i}\}$ of $S$ is called minuscule maximal parabolic subgroup of $G.$\\
	For minuscule fundamental weight $\omega_{i},$ the elements of $W^{S\setminus \{\alpha_{i}\}}$ are called minuscule Weyl group elements.\\
	For notations and results on Standard monomial theory, we refer to \cite{Seshadri}.\\
	We recall the definition of the Hilbert-Mumford numerical function and definition of the semistable and stable  points from \cite{GIT} (also see \cite{NEWSTEAD}).\\
	Let $G$ be a reductive group acting on a projective variety $X$. Let $\lambda$ be a one-parameter subgroup of $G$. Let $\mathcal{L}$ be a $G$-linearized very ample line bundle on $X$.
	\begin{enumerate}
		\item 
		Let $x\in \mathbb{P}(H^{0}(X,\mathcal{L})^{*})$ and $\hat{x}$ be a point in the cone $\hat{X}$ over X which lies on $x$.  Let $\{v_{i}: 1\leq i\leq k\}$ be a basis of $H^{0}(X,\mathcal{L})^{*}$ such that $\lambda(t)\cdot v_{i}=t^{m_{i}}v_{i}$, for $1\leq i\leq k$. Write $\hat{x}=\displaystyle\sum_{i=1}^{k}c_{i}v_{i}$. Then the Hilbert-Mumford numerical function is defined by $\mu^{\mathcal{L}}(x,\lambda):= -\displaystyle \min_{i}\{m_{i}| c_{i}\neq 0\}$.
		\label{definition of HM weight}
		\label{HM criterion}
		\item 
		\begin{enumerate}
			\item The set of semistable points is defined as\\ $X^{ss}_{G}(\mathcal{L})=\{x\in X|\ \exists \  s \ \in H^{0}(X,\mathcal{L}^{\otimes n})^{G}$ for some positive integer $n$ such that $s(x)\neq 0\}$.
			\label{semistability def}
			\item The set of stable points is defined as \\
			$X^{s}_{G}(\mathcal{L})=\{x\in X^{ss}_{G}(\mathcal{L})| $ the orbit $G\cdot x$ is closed in $X^{ss}_{G}(\mathcal{L})$ and the stabilizer $G_{x}$ of $x$ in $G$ is finite$\}$.
		\end{enumerate}
	\end{enumerate}
	We recall Hilbert-Mumford criterion from 
	\begin{theorem}$($see \cite[Theorem 2.1]{GIT}$)$ Let $x\in X$. Then 
		\begin{enumerate}
			\item x is semi-stable if and only if $\mu^{\mathcal{L}}(x,\lambda)\geq 0$ for all one parameter subgroup $\lambda$ of $G$.
			\item x is stable if and only if $\mu^{\mathcal{L}}(x,\lambda)> 0$ for all non trivial one parameter subgroup $\lambda$ of $G$. 
		\end{enumerate}
		\label{HM theprem for G}
	\end{theorem}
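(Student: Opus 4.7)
The plan is to reformulate semistability in terms of orbit closures in the affine cone $\hat{X}$, and then handle the two directions separately. First, I would establish that $x \in X^{ss}_{G}(\mathcal{L})$ if and only if $0 \notin \overline{G \cdot \hat{x}}$ inside $\hat{X}$: a $G$-invariant section $s \in H^{0}(X,\mathcal{L}^{\otimes n})^{G}$ is the same data as a $G$-invariant homogeneous polynomial on $\hat{X}$ vanishing at $0$, so $s(\hat{x}) \neq 0$ separates $\hat{x}$ from $0$ in the orbit closure. The reverse implication rests on Nagata's theorem that, for a reductive group, the ring of invariants separates disjoint closed invariant subvarieties.

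For the necessity of the numerical condition in (1), I fix a one-parameter subgroup $\lambda$ and choose the weight basis $\{v_{i}\}$ of $H^{0}(X,\mathcal{L})^{*}$ with $\lambda(t)\cdot v_{i} = t^{m_{i}} v_{i}$, writing $\hat{x} = \sum c_{i} v_{i}$. If $\mu^{\mathcal{L}}(x,\lambda) < 0$, then by definition every $m_{i}$ with $c_{i} \neq 0$ is strictly positive, so $\lambda(t)\cdot \hat{x} = \sum_{c_{i} \neq 0} c_{i} t^{m_{i}} v_{i} \to 0$ as $t \to 0$. This places $0$ in $\overline{G \cdot \hat{x}}$, contradicting semistability; so semistability forces $\mu^{\mathcal{L}}(x,\lambda) \geq 0$.

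The main obstacle is the sufficiency in (1), which is the substantive content of the theorem. Assuming $\mu^{\mathcal{L}}(x,\lambda) \geq 0$ for every cocharacter $\lambda$ of $G$, I must show $0 \notin \overline{G \cdot \hat{x}}$. The crux is the \emph{fundamental lemma of Hilbert--Mumford}: for a reductive group $G$, if $0 \in \overline{G \cdot \hat{x}}$, then there exists a cocharacter $\lambda$ of $G$ with $\lim_{t \to 0} \lambda(t) \cdot \hat{x} = 0$. This is the classical Mumford--Kempf step; one approximates a sequence $g_{n} \cdot \hat{x} \to 0$ by decomposing $g_{n}$ via the Bruhat/Cartan decomposition, exploits the properness of $G/B$ to pass to a convergent subsequence of the resulting cocharacter data, and then extracts a single cocharacter of $T$, up to $G$-conjugacy, driving the degeneration to $0$. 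For any such $\lambda$ the weight decomposition forces all $m_{i}$ with $c_{i} \neq 0$ to be positive, hence $\mu^{\mathcal{L}}(x,\lambda) < 0$, contradicting the standing hypothesis.

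Part (2) follows from (1) together with a boundary analysis at $\mu = 0$. If $x$ is semistable but not stable, then either the identity component of $G_{x}$ is positive-dimensional and reductive, hence contains a nontrivial cocharacter $\lambda$ fixing $\hat{x}$ up to scalar and therefore with $\mu^{\mathcal{L}}(x,\lambda) = 0$; or the orbit $G \cdot \hat{x}$ is not closed in the semistable cone, in which case applying the fundamental lemma to $\hat{x}$ relative to a closed orbit in its closure produces a nontrivial cocharacter $\lambda$ for which $\lim_{t \to 0} \lambda(t)\cdot\hat{x}$ exists, is nonzero, and lies outside $G \cdot \hat{x}$, forcing $\mu^{\mathcal{L}}(x,\lambda) = 0$. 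Conversely, if some nontrivial $\lambda$ satisfies $\mu^{\mathcal{L}}(x,\lambda) = 0$, the weight analysis produces either an element of $G_{x}$ of infinite order or a semistable limit point outside the orbit, violating stability in either case.
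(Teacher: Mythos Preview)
The paper does not prove this theorem: it is stated as a citation of \cite[Theorem 2.1]{GIT} and used as a black box, with no proof environment following it. So there is no ``paper's own proof'' to compare your proposal against.

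That said, your outline is the standard Hilbert--Mumford argument and is correct in its broad strokes. The reformulation of semistability as $0 \notin \overline{G \cdot \hat{x}}$, the easy direction via the limit $\lambda(t)\cdot\hat{x} \to 0$, and the identification of the hard direction with the Hilbert--Mumford--Kempf lemma are all right. One small point worth tightening in part (2): when you assert that a positive-dimensional stabilizer $G_{x}$ contains a nontrivial cocharacter, you implicitly use that $G_{x}$ is reductive, which in turn requires that the orbit $G\cdot\hat{x}$ be closed in the affine cone (Matsushima's criterion). You have partitioned the non-stable semistable case into ``orbit not closed'' and ``stabilizer infinite,'' but these are not mutually exclusive, and in the overlap the stabilizer need not be reductive; the clean dichotomy is ``orbit not closed'' versus ``orbit closed with positive-dimensional stabilizer,'' and only in the latter does Matsushima give you the torus. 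This is a minor organizational issue rather than a genuine gap.
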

	The following special case is stated separately for future reference.
	\begin{corollary} Let $G,X, \mathcal{L}$ be as above. Let $\lambda :G_{m}\rightarrow G$ be an one parameter subgroup. Then, we have 
		\begin{enumerate}
			\item $x$ is semi-stable if and only if both $\mu^{\mathcal{L}}(x,\lambda)$ and $\mu^{\mathcal{L}}(x,-\lambda)$ are non-negative.
			\item $x$ is stable if and only if both $\mu^{\mathcal{L}}(x,\lambda)$ and $\mu^{\mathcal{L}}(x,-\lambda)$ are positive.
		\end{enumerate}
		\label{HM theorem}
	\end{corollary}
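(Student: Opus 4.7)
The plan is to apply the Hilbert--Mumford criterion (Theorem \ref{HM theprem for G}) to the rank-one torus $\mathbb{G}_m$ acting on $X$ through $\lambda$, and to enumerate the cocharacters of $\mathbb{G}_m$. Every one-parameter subgroup of $\mathbb{G}_m$ has the form $t \mapsto t^n$ for some $n \in \mathbb{Z}$, so as a cocharacter of $G$ it is exactly $\lambda^n$. Thus the criterion reduces to checking the sign of $\mu^{\mathcal{L}}(x, \lambda^n)$ for every $n \in \mathbb{Z}$, and the remaining task is to understand how the numerical function scales under powers of $\lambda$.

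Next I would compute this scaling directly from Definition \ref{definition of HM weight}. Fix a basis $\{v_i\}$ of $H^{0}(X,\mathcal{L})^{*}$ diagonalizing $\lambda$ with weights $m_i$, and write $\hat x = \sum_i c_i v_i$. Since $\lambda^n$ acts on $v_i$ by $t^{nm_i}$, one gets
\[
\mu^{\mathcal{L}}(x,\lambda^n) \;=\; -\min\{\, nm_i : c_i \neq 0 \,\} \;=\; \begin{cases} n\,\mu^{\mathcal{L}}(x,\lambda) & \text{if } n>0,\\ |n|\,\mu^{\mathcal{L}}(x,-\lambda) & \text{if } n<0,\\ 0 & \text{if } n=0, \end{cases}
\]
where the case $n<0$ uses that scaling by a negative constant converts $\min$ into $\max$, matching the formula for $\mu^{\mathcal{L}}(x,-\lambda)$.

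Finally I would conclude by invoking Theorem \ref{HM theprem for G}: semistability requires $\mu^{\mathcal{L}}(x,\lambda^n) \geq 0$ for all $n\in\mathbb{Z}$, which by the display above collapses to the two conditions $\mu^{\mathcal{L}}(x,\lambda) \geq 0$ and $\mu^{\mathcal{L}}(x,-\lambda) \geq 0$, the $n=0$ case being vacuous. Stability is identical with strict inequalities imposed for every non-trivial cocharacter, i.e.\ for every $n \neq 0$, which gives strict positivity of both $\mu^{\mathcal{L}}(x,\pm\lambda)$.

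There is no real obstacle here: the statement is essentially a tautology once one observes that the cocharacter lattice of $\mathbb{G}_m$ is the rank-one lattice $\mathbb{Z}$, and that the Hilbert--Mumford numerical function is positively homogeneous of degree one in the cocharacter. Hence only the two rays generated by $\lambda$ and $-\lambda$ carry any information, and the corollary is immediate from Theorem \ref{HM theprem for G}.
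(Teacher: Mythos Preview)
Your proof is correct and matches the paper's approach: the paper does not actually write out a proof for this corollary, stating only that it is a ``special case'' of Theorem~\ref{HM theprem for G} recorded ``for future reference.'' You have simply supplied the routine details that the paper omits---namely that the cocharacter lattice of $\mathbb{G}_m$ is $\mathbb{Z}$ and that $\mu^{\mathcal{L}}(x,\lambda^n)$ scales positively homogeneously in $n$---so there is nothing to compare.
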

	Let $G, T, B,$ $\overline{C(B)}$ be as above.
	Let $\chi=\displaystyle\sum_{i=1}^{n}m_{i}\omega_{i}$ be a non trivial dominant character of $T$. Let $J=\{\alpha_{i}\in S:m_{i}=0\}$. Let $P=P_{J}$, and $w\in W^{J}$, $b\in B$.\\
	For $w\in W^{J}$, let $X(w):=\overline{BwP/P}$ denote the Schubert variety in $G/P$ corresponding to $w$.\\
		Here, we recall a Lemma  due to C. S. Seshadri. This will be used for computing semistable points.
	\begin{lemma} $($see \cite[Lemma 5.1]{SeS 2}$)$
		Let $x=bwP/P$. 
		Let $\lambda\in \overline{C(B)}$ be a one-parameter subgroup. Then we have \[\mu^{\mathcal{L}(\chi)}(x,\lambda)=-\langle w(\chi),\lambda\rangle.\]
		(The sign here is negative because we are using left action of P on G/P while in \cite[Lemma 5.1]{SeS 2}  the action is on the right)
		\label{CSS}
	\end{lemma}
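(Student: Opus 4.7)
The plan is to embed $G/P$ in the projective space attached to a $G$-module containing $wP/P$ as a $T$-weight vector of known weight, and then show that left multiplication by $b\in B$ only perturbs that vector by adding components whose weights pair more positively with any $\lambda\in \overline{C(B)}$. Concretely, I would use the standard embedding $\iota:G/P\hookrightarrow \mathbb{P}(V)$ with $V=H^{0}(G/P,\mathcal{L}(\chi))^{*}$ and lift to the affine cone. Because $T\subset P$, for any $t\in T$ we have $twP=w(w^{-1}tw)P=wP$, so $wP/P$ is a $T$-fixed point and its lift $\hat{w}\in V$ is a $T$-eigenvector; by the construction of line bundles on $G/P$ from characters of $P$, its weight equals $w(\chi)$ (up to the sign convention on the fiber of $\mathcal{L}(\chi)$ that is flagged in the parenthetical remark of the lemma).

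Next I would reduce the computation at $x=bwP/P$ to that at $wP/P$. Decompose $b=ut$ with $u\in U$ and $t\in T$; since $t$ fixes $wP/P$ as a point, we have $\hat{x}=u\cdot\hat{w}$ in $V$, up to a nonzero scalar that does not affect the Hilbert--Mumford weight. Writing $u=\exp(X)$ with $X\in \mathfrak{u}=\bigoplus_{\alpha\in R^{+}}\mathfrak{g}_{\alpha}$, the Taylor expansion
\[
u\cdot\hat{w}=\hat{w}+X\hat{w}+\tfrac{1}{2}X^{2}\hat{w}+\cdots
\]
exhibits $\hat{x}$ as a sum of $T$-weight vectors whose weights all have the form $w(\chi)+\alpha_{i_{1}}+\cdots+\alpha_{i_{k}}$ with $\alpha_{i_{j}}\in R^{+}$. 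The hypothesis $\lambda\in \overline{C(B)}$ gives $\langle \alpha,\lambda\rangle\ge 0$ for every positive root $\alpha$, hence
\[
\langle w(\chi)+\alpha_{i_{1}}+\cdots+\alpha_{i_{k}},\lambda\rangle \;\ge\; \langle w(\chi),\lambda\rangle.
\]
Since the zero-order term $\hat{w}$ appears with coefficient $1$, the minimum value of $\langle\mu,\lambda\rangle$ over the weights $\mu$ occurring in $\hat{x}$ is exactly $\langle w(\chi),\lambda\rangle$, and by the definition of $\mu^{\mathcal{L}(\chi)}$ we conclude $\mu^{\mathcal{L}(\chi)}(x,\lambda)=-\langle w(\chi),\lambda\rangle$.

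The main obstacle I anticipate is the bookkeeping of sign conventions: depending on whether one models $\mathcal{L}(\chi)$ as $G\times^{P}\mathbb{C}_{\chi}$ or as $G\times^{P}\mathbb{C}_{-\chi}$, and whether one acts on $G/P$ by left or right translation, the weight of $\hat{w}$ could come out as $w(\chi)$ or $-w(\chi)$, which is precisely the issue flagged in the parenthetical remark of the lemma. Once one convention is fixed at the outset, the rest of the argument is a clean weight computation, and in particular the crucial step only uses that $\lambda$ lies in the closure of the dominant chamber so that positive roots cannot decrease the Hilbert--Mumford weight below the contribution of the torus-fixed lift $\hat{w}$.
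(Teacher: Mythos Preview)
The paper does not give its own proof of this lemma: it is stated as a citation of \cite[Lemma~5.1]{SeS 2}, with only the parenthetical remark about the sign convention added. Your proposal is a correct, self-contained proof and is essentially the classical argument underlying Seshadri's lemma: lift $wP/P$ to an extremal weight vector $\hat{w}$ of weight $w(\chi)$ in $V=H^{0}(G/P,\mathcal{L}(\chi))^{*}$, observe that acting by $u\in U$ only adds components whose weights are $w(\chi)$ plus nonnegative sums of positive roots, and use $\lambda\in\overline{C(B)}$ to conclude that the minimum of $\langle\mu,\lambda\rangle$ over the weights occurring in $\hat{x}$ is attained at $w(\chi)$. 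Your handling of the sign issue is also on point and matches the paper's parenthetical caveat.
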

	Following variation of the above Lemma follows from   \cite[Lemma 5.1]{SeS 2} by imitating the proof for $B^{-}$.
	\begin{lemma}
		Let $G,T,B,\chi,J, P$ and $\overline{C(B)}$ be as above. Let $w\in W^{J}$, $x\in B^{-}wP/P$. Then for every $\lambda\in \overline{C(B)}$, we have \[\mu^{\mathcal{L}(\chi)}(x,-\lambda)=\langle w(\chi),\lambda\rangle\]
		\label{variation of Seshadri's lemma}
	\end{lemma}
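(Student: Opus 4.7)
The plan is to mimic the proof of Lemma \ref{CSS} with $B^{-}$ in place of $B$, tracking how the sign flips propagate through the weight decomposition. Since $U^{-}$ is generated by the negative root groups, the key difference will be that translation by $b^{-}\in B^{-}$ introduces weights \emph{below} $w(\chi)$ in the root order instead of above it, and this inversion of the partial order is exactly compensated by evaluating on $-\lambda$ instead of $\lambda$.

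First I would write $x = b^{-}wP/P$ for some $b^{-} \in B^{-}$ and factor $b^{-} = u^{-}t$ with $u^{-}$ in the unipotent radical $U^{-}$ of $B^{-}$ and $t\in T$. Fix a representative $\tilde{w}\in N_{G}(T)$ and take the natural lift $\hat{x} = b^{-}\tilde{w}\,v_{\chi}$ in the affine cone, where $v_{\chi}\in H^{0}(G/P,\mathcal{L}(\chi))^{*}$ is a highest-weight vector of weight $\chi$. Then $\tilde{w}\,v_{\chi}$ is a weight vector of weight $w(\chi)$, and $t\cdot\tilde{w}\,v_{\chi} = (w(\chi))(t)\,\tilde{w}\,v_{\chi}$ is still a nonzero weight vector of weight $w(\chi)$.

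Next I would decompose $\hat{x}$ into $T$-weight spaces. Since every element $u^{-}\in U^{-}$ acts on a weight vector of weight $\mu$ by $u^{-}\cdot v_{\mu} = v_{\mu} + \sum_{n_{i}\geq 0,\,\sum n_{i}>0} v_{\mu - \sum n_{i}\alpha_{i}}$ (a consequence of the commutation relations for the negative root subgroups), every weight $\mu$ that appears in $\hat{x}$ satisfies $\mu \leq w(\chi)$ in the root lattice order, and the weight component at $w(\chi)$ itself is $(w(\chi))(t)\,\tilde{w}\,v_{\chi}$, which is nonzero. Now for $\lambda\in\overline{C(B)}$ we have $\langle\alpha_{i},\lambda\rangle\geq 0$ for every simple root, so $\mu\leq w(\chi)$ forces $\langle\mu,\lambda\rangle \leq \langle w(\chi),\lambda\rangle$, equivalently $\langle\mu,-\lambda\rangle \geq -\langle w(\chi),\lambda\rangle$. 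Consequently the minimum of $\langle\mu,-\lambda\rangle$ over the weights appearing in $\hat{x}$ is attained precisely at $\mu = w(\chi)$ and equals $-\langle w(\chi),\lambda\rangle$. Applying the definition of the Hilbert-Mumford function,
\[
\mu^{\mathcal{L}(\chi)}(x,-\lambda) = -\min_{\mu}\langle\mu,-\lambda\rangle = \langle w(\chi),\lambda\rangle,
\]
as required.

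The only real subtlety to check carefully is that the $w(\chi)$-weight component of $\hat{x}$ is nonzero; this is what forces the minimum to be attained there and not at some strictly smaller weight. The argument above handles it via the $1 + (\text{strictly lowering nilpotent})$ form of the action of $u^{-}$. Everything else is a direct transcription of the original proof in \cite{SeS 2}, with the direction of the Bruhat/root order flipped because negative roots now do the lowering.
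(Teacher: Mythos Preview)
Your argument is correct and is exactly what the paper intends: the paper does not spell out a proof of this lemma but simply states that it follows by imitating Seshadri's proof of Lemma~\ref{CSS} with $B^{-}$ in place of $B$, which is precisely the route you take. One very minor remark: the minimum of $\langle\mu,-\lambda\rangle$ need not be attained \emph{only} at $\mu=w(\chi)$ when $\lambda$ lies on a wall of $\overline{C(B)}$, but since the $w(\chi)$-component of $\hat{x}$ is nonzero the minimum is certainly \emph{achieved} there and equals $-\langle w(\chi),\lambda\rangle$, which is all that is needed.
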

	\section{Semistability criterion}
	In this section, we give criterion for Schubert varities for which semistable points for the action of $\lambda_{s}$ is non empty. We also prove a criterion for semistable = stable. Let $\chi=\displaystyle\sum_{i=1}^{n}m_{i}\omega_{i}$ be a non trivial dominant character of $T$. Let $J$, $P$, $w\in W^{J}$ be as above.
	\begin{lemma} For $1\leq s\leq n$,  $X(w)^{ss}_{\lambda_{s}}(\mathcal{L}(\chi))\neq \phi$ if and only if $\langle w(\chi),\lambda_{s}\rangle\leq 0$. 
		\label{main lemma}
	\end{lemma}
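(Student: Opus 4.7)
The plan is to combine the Hilbert--Mumford numerical criterion (Corollary \ref{HM theorem}) with Seshadri's Lemma \ref{CSS} and its opposite-cell variant Lemma \ref{variation of Seshadri's lemma}, applied to suitable points of $X(w)$. The relevant auxiliary fact is that $\lambda_{s}\in \overline{C(B)}$: writing a positive root as $\alpha=\sum_{j}c_{j}\alpha_{j}$ with $c_{j}\geq 0$ gives $\langle\alpha,\lambda_{s}\rangle=c_{s}\geq 0$. This ensures that both Seshadri-type lemmas are available for $\lambda=\lambda_{s}$.

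For the ``only if'' direction, observe that $X(w)^{ss}_{\lambda_{s}}(\mathcal{L}(\chi))$ is open in $X(w)$, being a union of non-vanishing loci of $\lambda_{s}$-invariant sections. If it is non-empty, then by irreducibility of the Schubert variety $X(w)$ it meets the open dense Bruhat cell $BwP/P$, yielding a semistable point $x=bwP/P$ with $b\in B$. By Lemma \ref{CSS}, $\mu^{\mathcal{L}(\chi)}(x,\lambda_{s})=-\langle w(\chi),\lambda_{s}\rangle$, and Hilbert--Mumford forces this to be non-negative, giving $\langle w(\chi),\lambda_{s}\rangle\leq 0$.

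For the ``if'' direction, assume $\langle w(\chi),\lambda_{s}\rangle\leq 0$, and choose $x$ in the open Richardson variety $R_{e,w}:=BwP/P\cap B^{-}P/P$, which is open and dense in $X(w)$ (of dimension $\ell(w)$) since $e\leq w$ in the Bruhat order. For such $x$, Lemma \ref{CSS} gives $\mu^{\mathcal{L}(\chi)}(x,\lambda_{s})=-\langle w(\chi),\lambda_{s}\rangle\geq 0$, while Lemma \ref{variation of Seshadri's lemma} applied with the identity element of $W$ gives $\mu^{\mathcal{L}(\chi)}(x,-\lambda_{s})=\langle\chi,\lambda_{s}\rangle$. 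This last quantity is non-negative because $\chi$ is a non-negative combination of fundamental weights $\omega_{i}$, each $\omega_{i}$ is a non-negative combination of the simple roots $\alpha_{j}$, and $\langle\alpha_{j},\lambda_{s}\rangle=\delta_{js}\geq 0$. Corollary \ref{HM theorem} then certifies $x$ as semistable, so $X(w)^{ss}_{\lambda_{s}}(\mathcal{L}(\chi))\neq\phi$.

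No serious obstacle is anticipated; the only slightly delicate point is recognising that a generic point of $X(w)$ lies simultaneously in the top Bruhat cell $BwP/P$ and in the big opposite cell $B^{-}P/P$ (i.e.\ in the open Richardson variety $R_{e,w}$), which is precisely what allows both Seshadri-type lemmas to be applied to the same $x$ so as to control the weights of $\lambda_{s}$ and $-\lambda_{s}$ simultaneously.
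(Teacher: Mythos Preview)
Your proof is correct and follows essentially the same strategy as the paper. The ``only if'' direction is identical; for the ``if'' direction, both arguments pick a point in $BwP/P\cap B^{-}P/P$ --- the paper phrases this as choosing $x\in BwP/P$ with $p_{id}(x)\neq 0$ and then computes $\mu^{\mathcal{L}(\chi)}(x,-\lambda_{s})=\langle\chi,\lambda_{s}\rangle$ by hand via weight vectors, whereas you name the Richardson cell explicitly and invoke Lemma~\ref{variation of Seshadri's lemma} directly, which is the packaged form of that same computation.
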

	\begin{proof}
		Write $w(\chi)=\displaystyle\sum_{i=1}^{n}a_{i}\alpha_{i}$ with $a_{i}\in \mathbb{Z}$ for all $1\leq j\leq n$. Then $\langle w(\chi),\lambda_{s}\rangle=a_{s}$. We prove that $X(w)^{ss}_{\lambda_{s}}(\mathcal{L}(\chi))\neq \phi$ if and only if $a_{s}\leq 0$.\\
		(:$\Rightarrow$) Assume that $X(w)^{ss}_{\lambda_{s}}(\mathcal{L}(\chi))\neq \phi$. Since $X(w)^{ss}_{\lambda_{s}}(\mathcal{L}(\chi))$ is non empty open subset of $X(w)$ and $X(w)$ is irreducible, we have $X(w)^{ss}_{\lambda_{s}}(\mathcal{L}(\chi))\cap BwP/P \neq\phi $. So, let $x\in X(w)^{ss}_{\lambda_{s}}(\mathcal{L}(\chi))\cap BwP/P$. Then by Corollary \ref{HM theorem}, we have 
		\begin{equation}
			\mu^{\mathcal{L}(\chi)}(x,\lambda_{s})\geq 0
		\end{equation}
		By Lemma \ref{CSS}, we have
		\begin{equation}
			\mu^{\mathcal{L}(\chi)}(x,\lambda_{s})=-\langle w(\chi),\lambda_{s}\rangle=-a_{s}
		\end{equation}
		Therefore, from $(3.1)$ and $(3.2)$, we have $a_{s}\leq 0$.\\
		($\Leftarrow:$) Conversely, assume that 
		$a_{s}\leq 0$. Then, using Lemma \ref{CSS} for every $x\in BwP/P$, we have \begin{equation}
			\mu^{\mathcal{L}(\chi)}(x,\lambda_{s})=-\langle w(\chi),\lambda_{s}\rangle =-a_{s}\geq 0
		\end{equation} 
		We now show that there is an element $x\in BwP/P$ such that $\mu^{\mathcal{L}(\chi)}(x,-\lambda_{s})\geq 0$.
		Let $\widehat{X(w)}\subseteq H^{0}(G/P,\mathcal{L}(\chi))^{*}$ be the cone over $X(w)$. Note that the $G$-module $H^{0}(G/P,\mathcal{L}(\chi))^{*}$  is an irreducuble  $G$-module with highest weight  $\chi$ and consequently lowest weight is $w_{0}(\chi)$. Let $\{v_{j} :1\leq j \leq k\}$ be a basis of $H^{0}(G/P,\mathcal{L}(\chi))^{*}$  consisting of weights vectors  for the action of $T$, with $v_{1}$ is of weight $\chi$. Let $p_{id}$ denote the dual basis vector in $H^{0}(G/P,\mathcal{L}(\chi))$ corresponding to $v_{1}$.
		Since $(id)P/P\in X(w)$ and $p_{id}|_{BwP/P}\neq 0$, there is an element $x\in BwP/P$ such that $\hat{x}=\displaystyle\sum_{j=1}^{k}c_{j}v_{j}$ with $c_{1}\neq 0$.\\
		Let $\mu_{j}$ be the weight of $v_{j}$ for  $1\leq j\leq k$. Then, for all $1\leq j\leq k$, we have
		\[ (-\lambda_{s})(a)\cdot v_{j}=a^{-\langle \mu_{j},\lambda_{s}\rangle}\cdot v_{j} \ ,\ \ a\in \mathbb{C}^{\ast}\]
		Therefore, we have $\mu^{\mathcal{L}(\chi)}(x,-\lambda_{s})=-\displaystyle\min_{j}\{ -\langle \mu_{j},\lambda_{s}\rangle: c_{j}\neq 0\}$.\\
		Further, since $\chi$ is the highest weight of $H^{0}(G/P,\mathcal{L}(\chi))^{*}$, we have  $\langle\chi,-\lambda_{s}\rangle\leq \langle \mu_{j},-\lambda_{s}\rangle$ for all $1\leq j\leq k$. Since $c_{1}\neq 0$, we have $\mu^{\mathcal{L}(\chi)}(x,-\lambda_{s})=-\langle \chi,-\lambda_{s}\rangle=\langle\chi,\lambda_{s}\rangle$. 
		
		Since $\chi$ is  a positive integral linear combination of simple roots, we have $\langle \chi,\lambda_{s}\rangle \in \mathbb{N}$, and this implies \begin{equation}
			\mu^{\mathcal{L}(\chi)}(x,-\lambda_{s})>0
		\end{equation}
		Therefore,  from equation (3.3) and (3.4), we have $\mu^{\mathcal{L}(\chi)}(x,\pm\lambda_{s})>0$. Hence, the proof follows from $(1)$ of Corollary \ref{HM theorem}.
	\end{proof}	
	\begin{corollary}
		$X(w)^{ss}_{T}(\mathcal{L}(\chi))\neq\phi$ if and only if $X(w)^{ss}_{\lambda_{s}}(\mathcal{L}(\chi))\neq\phi$ for all $1\leq s\leq n$.
	\end{corollary}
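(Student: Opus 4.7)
The plan is to chain together Lemma \ref{main lemma} with the $T$-semistability criterion of Kannan--Pattanayak recalled in the introduction, which asserts that $X(w)^{ss}_{T}(\mathcal{L}(\chi))\neq\phi$ if and only if $w(\chi)\leq 0$ in the root-lattice order on $X(T)$.

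The first step is a numerical dictionary. Writing $w(\chi)=\sum_{i=1}^{n}a_{i}\alpha_{i}$, the defining relation $\langle\alpha_{j},\lambda_{i}\rangle=\delta_{ij}$ yields $\langle w(\chi),\lambda_{s}\rangle=a_{s}$. Hence the condition ``$\langle w(\chi),\lambda_{s}\rangle\leq 0$ for every $1\leq s\leq n$'' is literally the condition ``$a_{s}\leq 0$ for every $s$'', which is in turn precisely $w(\chi)\leq 0$.

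With this in place, the forward direction is immediate: every $T$-invariant section is a fortiori $\lambda_{s}$-invariant, so $X(w)^{ss}_{T}(\mathcal{L}(\chi))\subseteq X(w)^{ss}_{\lambda_{s}}(\mathcal{L}(\chi))$, and nonemptiness propagates from $T$ to each $\lambda_{s}$. For the converse, assume $X(w)^{ss}_{\lambda_{s}}(\mathcal{L}(\chi))\neq\phi$ for every $1\leq s\leq n$. Lemma \ref{main lemma} then gives $\langle w(\chi),\lambda_{s}\rangle\leq 0$ for every $s$, so $w(\chi)\leq 0$ by the identification just established, and invoking the Kannan--Pattanayak criterion concludes $X(w)^{ss}_{T}(\mathcal{L}(\chi))\neq\phi$.

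The argument is genuinely a two-line combination of the two semistability criteria, so no real obstacle arises. The only subtlety worth flagging is that, if one insisted on a fully self-contained derivation avoiding \cite{KP}, the $\Leftarrow$ direction of the $T$-criterion would have to be re-proved; this would require producing some $x\in BwP/P$ such that $0$ lies in the convex hull of the $T$-weights appearing in a lift $\hat{x}\in H^{0}(G/P,\mathcal{L}(\chi))^{*}$, which is strictly stronger than the pair of computations $\mu^{\mathcal{L}(\chi)}(x,\pm\lambda_{s})$ used in Lemma \ref{main lemma} and would rely on a fuller standard-monomial-theoretic analysis of the weight support of $\hat{x}$.
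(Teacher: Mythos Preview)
Your proof is correct and follows essentially the same route as the paper: both arguments combine Lemma~\ref{main lemma} with the Kannan--Pattanayak criterion \cite[Lemma~2.1]{KP} via the observation that $w(\chi)\leq 0$ is equivalent to $\langle w(\chi),\lambda_{s}\rangle\leq 0$ for all $s$. The only cosmetic difference is that for the forward implication you invoke the trivial inclusion $X(w)^{ss}_{T}(\mathcal{L}(\chi))\subseteq X(w)^{ss}_{\lambda_{s}}(\mathcal{L}(\chi))$ directly, whereas the paper runs the chain of equivalences in both directions; either way the content is identical.
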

	\begin{proof}
		By \cite[Lemma 2.1]{KP}, we have $X(w)^{ss}_{T}(\mathcal{L}(\chi))\neq \phi$ if and only if $w(\chi)\leq 0$. On the other hand, $w(\chi)\leq 0$ if and only if $\langle w(\chi),\lambda_{s}\rangle\leq0$ for all $1\leq s\leq n$. Now, the assertion of the corollary follows from Lemma \ref{main lemma}.
	\end{proof}
	\begin{lemma} Fix $1\leq s\leq n$. Let $w\in W^{J}$. Then $X(w)^{ss}_{\lambda_{s}}(\mathcal{L}(\chi))=X(w)^{s}_{\lambda_{s}}(\mathcal{L}(\chi))$ if and only if $\langle v(\chi),\lambda_{s}\rangle\neq 0$ for all $v\leq w$ in $W^{J}$.
		\label{ss=s}
	\end{lemma}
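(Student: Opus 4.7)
The plan is to combine Seshadri's lemma (Lemma \ref{CSS}) with its opposite-Bruhat counterpart (Lemma \ref{variation of Seshadri's lemma}), using the two complementary Bruhat decompositions of $G/P$ by $B$ and $B^{-}$. Since $\lambda_{s}$ is a one-dimensional torus, Corollary \ref{HM theorem} tells us that a semistable point $x$ fails to be stable precisely when $\mu^{\mathcal{L}(\chi)}(x,\lambda_{s})=0$ or $\mu^{\mathcal{L}(\chi)}(x,-\lambda_{s})=0$, so the task reduces to detecting the vanishing of these Hilbert--Mumford numbers on $X(w)$.

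For the ($\Leftarrow$) direction, I would assume $\langle v(\chi),\lambda_{s}\rangle\neq 0$ for every $v\leq w$ in $W^{J}$ and take an arbitrary $x\in X(w)^{ss}_{\lambda_{s}}(\mathcal{L}(\chi))$. Using the two decompositions $G/P=\bigsqcup_{v\in W^{J}}BvP/P=\bigsqcup_{u\in W^{J}}B^{-}uP/P$, write $x\in BvP/P\cap B^{-}uP/P$ for unique $v,u\in W^{J}$. Since $x\in X(w)$, we have $v\leq w$, and the classical Richardson variety fact forces $u\leq v$, so $u\leq w$ as well. Lemma \ref{CSS} then gives $\mu^{\mathcal{L}(\chi)}(x,\lambda_{s})=-\langle v(\chi),\lambda_{s}\rangle$ and Lemma \ref{variation of Seshadri's lemma} gives $\mu^{\mathcal{L}(\chi)}(x,-\lambda_{s})=\langle u(\chi),\lambda_{s}\rangle$. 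Semistability forces $\langle v(\chi),\lambda_{s}\rangle\leq 0$ and $\langle u(\chi),\lambda_{s}\rangle\geq 0$, and combining with the hypothesis (applied to both $v$ and $u$) upgrades both to strict inequalities; so $x$ is stable by Corollary \ref{HM theorem}.

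For the ($\Rightarrow$) direction, I would argue by contrapositive: assume $\langle v_{0}(\chi),\lambda_{s}\rangle=0$ for some $v_{0}\leq w$ in $W^{J}$. Repeating the construction of the ($\Leftarrow$) half of the proof of Lemma \ref{main lemma} with $v_{0}$ in place of $w$ produces a point $x\in Bv_{0}P/P$ with $\mu^{\mathcal{L}(\chi)}(x,\lambda_{s})=-\langle v_{0}(\chi),\lambda_{s}\rangle=0$ and $\mu^{\mathcal{L}(\chi)}(x,-\lambda_{s})=\langle\chi,\lambda_{s}\rangle>0$. This $x$ is semistable but not stable, and since $x\in X(v_{0})\subseteq X(w)$, this shows $X(w)^{ss}_{\lambda_{s}}(\mathcal{L}(\chi))\neq X(w)^{s}_{\lambda_{s}}(\mathcal{L}(\chi))$.

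The main obstacle beyond the two Seshadri-type lemmas already in the paper is the Richardson-type claim that $BvP/P\cap B^{-}uP/P\neq\phi$ implies $u\leq v$ in $W^{J}$; this is the only nontrivial input from the geometry of flag varieties, and it is what allows me to bound $u$ by $w$ and thereby apply the nonzero-pairing hypothesis to the $-\lambda_{s}$ side as well. Everything else is a direct bookkeeping with the Hilbert--Mumford criterion.
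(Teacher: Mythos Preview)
Your proposal is correct and follows essentially the same route as the paper: the $(\Leftarrow)$ direction is identical, with the paper citing Deodhar \cite[Theorem 1.1(iii)]{Deodhar} for what you call the Richardson-type claim (that $BvP/P\cap B^{-}uP/P\neq\emptyset$ forces $u\leq v$). For $(\Rightarrow)$ the paper takes a slightly shorter path than yours: rather than invoking the construction from Lemma~\ref{main lemma}, it simply takes the $T$-fixed point $x=v_{0}P/P$, notes that $p_{v_{0}}$ is a $\lambda_{s}$-invariant section not vanishing at $x$ (so $x$ is semistable), and then reads off $\mu^{\mathcal{L}(\chi)}(x,\lambda_{s})=0$ from Lemma~\ref{CSS}.
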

	\begin{proof}
		($:\Rightarrow$) Assume, on the contrary that there exists an element $v\leq w$ in $W^{J}$ such that  $\langle v(\chi), \lambda_{s}\rangle=0$.
		Take $x=vP/P$. Then $p_{v}\in H^{0}(X(w),\mathcal{L}(\chi))^{\lambda_{s}}$ and $p_{v}(x)\neq0$. Thus $x\in X(w)$ is a semistable point.
		From Lemma \ref{CSS}, we have $\mu^{\mathcal{L}}(x,\lambda_{s})=-\langle v(\chi),\lambda_{s}\rangle=0$.\\
		Therefore, $x$ is not a stable point (using Corollary \ref{HM theorem} (2)).\\
		($\Leftarrow$:) Conversely, assume that  $\langle v(\chi),\lambda_{s}\rangle\neq 0$ for all $v\leq w$ in $W^{J}$. Let $x\in X(w)^{ss}_{\lambda_{s}}(\mathcal{L}(\chi))$. We have $X(w)=\coprod_{v\leq w}BvP/P$.
		So, let $x\in Bv_{1}P/P$ for some $v_{1}\leq w$, $v_{1}\in W^{J}$.
		Using (1) of Corollary \ref{HM theorem}, we have  \begin{equation}
			\mu^{\mathcal{L}(\chi)}(x,\pm\lambda_{s})
			\geq 0
		\end{equation}
		\textbf{Claim 1:} $\mu^{\mathcal{L}(\chi)}(x,\lambda_{s})>0$\\
		\textbf{Proof of claim 1:}
		Using Lemma \ref{CSS}, we have  $\mu^{\mathcal{L}(\chi)}(x,\lambda_{s})=-\langle v_{1}(\chi),\lambda_{s}\rangle$. This is non zero, by hypothesis. Then inequality (3.5) implies  $\mu^{\mathcal{L}(\chi)}(x,\lambda_{s})>0$. 
		\\
		\textbf{Claim  2: }$\mu^{\mathcal{L}(\chi)}(x,-\lambda_{s})>0$\\
		\textbf{Proof of claim 2:}
		From  of \cite[Theorem 1.1(iii), p.500]{Deodhar}, we have $y\in B^{-}v_{2}P/P$ for some $v_{2}\in W^{J}$ such that $v_{2}\leq v_{1}$. Since $v_{2}\leq v_{1}\leq w$, by the given hypothesis 
		$\langle v_{2}(\chi),\lambda_{s}\rangle\neq 0$.
		Since $\lambda_{s}\in \overline{C(B)}$, using Lemma \ref{variation of Seshadri's lemma}, we have $\mu^{\mathcal{L}(\chi)}(x,-\lambda_{s})=\langle v_{2}(\chi),\lambda_{s}\rangle\neq 0$. 
		Now using inequality (3.5), we conclude that $\mu^{\mathcal{L}(\chi)}(x,-\lambda_{s})>0$. Thus, from Claim 1 and Claim 2, we have $\mu^{\mathcal{L}(\chi)}(x,\pm\lambda_{s})
		> 0$. Then Corollary \ref{HM theorem} (2) implies that $x$ is a stable point.	
	\end{proof}
	\begin{corollary} $(G/P)^{ss}_{\lambda_{s}}(\mathcal{L}(\chi))=(G/P)^{s}_{\lambda_{s}}(\mathcal{L}(\chi))$ if and only if $\langle v(\chi),\lambda_{s}\rangle\neq 0$ for all $v\in W^{J}$.
		\label{ss=s for G/P I}
	\end{corollary}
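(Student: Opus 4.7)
The plan is to apply Lemma \ref{ss=s} to the maximal element of $W^{J}$. Let $w_{0}^{J}$ denote the longest element of $W^{J}$, the set of minimal coset representatives of $W/W_{J}$. A standard fact about Bruhat order on $W^{J}$ is that $w_{0}^{J}$ is the maximum element: every $v \in W^{J}$ satisfies $v \leq w_{0}^{J}$.

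First I would observe that $X(w_{0}^{J}) = G/P$. This is because the Bruhat decomposition gives $G/P = \coprod_{v \in W^{J}} BvP/P$, and since $X(w_{0}^{J}) = \overline{Bw_{0}^{J}P/P}$ contains $BvP/P$ for every $v \leq w_{0}^{J}$, which is every $v \in W^{J}$, the Schubert variety $X(w_{0}^{J})$ exhausts $G/P$.

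With this identification, the statement of Lemma \ref{ss=s} applied to $w = w_{0}^{J}$ reads: $(G/P)^{ss}_{\lambda_{s}}(\mathcal{L}(\chi)) = (G/P)^{s}_{\lambda_{s}}(\mathcal{L}(\chi))$ if and only if $\langle v(\chi), \lambda_{s}\rangle \neq 0$ for all $v \leq w_{0}^{J}$ in $W^{J}$. Since the condition $v \leq w_{0}^{J}$ is automatic on $W^{J}$, this is precisely the condition stated in the corollary, and both implications follow at once.

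There is no substantive obstacle here; the result is a direct specialization of the preceding lemma, and the only minor verification needed is that $w_{0}^{J}$ is indeed the Bruhat-maximal element of $W^{J}$, which is a standard consequence of the description $W^{J} = \{w \in W \mid w(\alpha) \in R^{+} \text{ for all } \alpha \in J\}$ together with the properties of the longest element of a Coxeter group.
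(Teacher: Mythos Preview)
Your proof is correct and follows essentially the same approach as the paper: both apply Lemma \ref{ss=s} to $w = w_{0}^{J}$, using that $X(w_{0}^{J}) = G/P$ and that $w_{0}^{J}$ is the Bruhat-maximal element of $W^{J}$. The paper's proof is slightly terser, citing \cite[p.44]{Coxeter group} for the maximality of $w_{0}^{J}$, while you sketch the Bruhat-decomposition argument for $X(w_{0}^{J}) = G/P$ directly.
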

	\begin{proof}
		Let $w_{0}^{J}\in W^{J}$ denote the minimal coset representative of the longest element $w_{0}$. Then we have $X(w_{0}^{J})=G/P$. Further, we have $v\leq w_{0}^{J}$ for all $v\in W^{J}$ (see  \cite[p.44]{Coxeter group}). Then the corollary follows from Lemma \ref{ss=s}. 
	\end{proof}
	\begin{corollary}
		Let $G,P,\chi$ be as above. Then, we have $(G/P)^{ss}_{T}(\mathcal{L}(\chi))=(G/P)^{s}_{T}(\mathcal{L}(\chi))$ if and only if $(G/P)^{ss}_{\lambda_{s}}(\mathcal{L}(\chi))=(G/P)^{s}_{\lambda_{s}}(\mathcal{L}(\chi))$ for all $1\leq s\leq n$.
	\end{corollary}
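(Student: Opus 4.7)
The plan is to translate both sides of the claimed equivalence into purely combinatorial conditions on $\chi$, using the two characterizations that are already on the table, and then to observe that these conditions coincide once one exploits the $W_{J}$-invariance of $\chi$.

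First I would rewrite the right-hand side using Corollary \ref{ss=s for G/P I} applied one $s$ at a time: the statement
\[
(G/P)^{ss}_{\lambda_{s}}(\mathcal{L}(\chi)) \;=\; (G/P)^{s}_{\lambda_{s}}(\mathcal{L}(\chi)) \quad \text{for every } 1\le s\le n
\]
is equivalent to
\[
\langle v(\chi),\lambda_{s}\rangle \neq 0 \quad \text{for every } v\in W^{J} \text{ and every } 1\le s\le n.
\]
For the left-hand side I would invoke \cite[Lemma 4.1, p.38]{Torus quotient II}, which asserts that $(G/P)^{ss}_{T}(\mathcal{L}(\chi)) = (G/P)^{s}_{T}(\mathcal{L}(\chi))$ is equivalent to $\langle w(\chi),\lambda_{s}\rangle\neq 0$ for every $w\in W$ and every $1\le s\le n$.

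The remaining step is the observation that these two criteria, although phrased differently, cut out the same condition on $\chi$. Since $\chi=\sum_{i=1}^{n}m_{i}\omega_{i}$ with $m_{i}=0$ whenever $\alpha_{i}\in J$, the character $\chi$ is fixed by $W_{J}$. Given $w\in W$, write $w=vu$ with $v\in W^{J}$ and $u\in W_{J}$; then $w(\chi)=v(\chi)$. Consequently the $W$-orbit of $\chi$ coincides with the $W^{J}$-orbit, and the nonvanishing condition quantified over $W$ is identical to the one quantified over $W^{J}$. Combining this with the two reformulations above yields the desired equivalence in both directions.

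There is no substantive obstacle here, as both combinatorial characterizations are available as black boxes; the only point requiring a word of justification is the $W_{J}$-invariance of $\chi$, which follows directly from the definition of $J$ as the set of simple roots on which $\chi$ vanishes.
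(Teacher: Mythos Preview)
Your proof is correct and follows essentially the same route as the paper: both invoke \cite[Lemma 4.1, p.38]{Torus quotient II} for the $T$-side and Corollary~\ref{ss=s for G/P I} for the $\lambda_{s}$-side, then match the two combinatorial criteria. The only difference is that you make explicit the (easy) reason the two criteria agree---namely the $W_{J}$-invariance of $\chi$ forcing $\{w(\chi):w\in W\}=\{v(\chi):v\in W^{J}\}$---whereas the paper leaves this step implicit.
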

	\begin{proof}
		By \cite[Lemma 4.1, p.38]{Torus quotient II}, we have $(G/P)^{ss}_{T}(\mathcal{L}(\chi))=(G/P)^{s}_{T}(\mathcal{L}(\chi))$ if and only if $\langle w(\chi),\lambda_{s}\rangle\neq 0$ for all $w\in W$ and for all $1\leq s\leq n$. Thus, the corollary follows from Corollary \ref{ss=s for G/P I}.
	\end{proof}
	\begin{corollary}$(G/P)^{ss}_{\lambda_{s}}(\mathcal{L}(\chi))=(G/P)^{s}_{\lambda_{s}}(\mathcal{L}(\chi))$ if and only if $\langle v(\chi),\lambda_{s}\rangle\neq 0$  for all minimal $v$ in $W^{J}$ such that $X(v)^{ss}_{\lambda_{s}}(\mathcal{L}(\chi))\neq \phi$.
		
		\label{ss=s for G/P II}
	\end{corollary}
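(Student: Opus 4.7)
The plan is to deduce Corollary~\ref{ss=s for G/P II} from Corollary~\ref{ss=s for G/P I} by propagating the nonvanishing condition from the minimal semistable Schubert cells up to all of $W^{J}$ via a monotonicity lemma for $v \mapsto \langle v(\chi),\lambda_{s}\rangle$ along the Bruhat order. The forward direction is immediate: if $(G/P)^{ss}_{\lambda_{s}}(\mathcal{L}(\chi)) = (G/P)^{s}_{\lambda_{s}}(\mathcal{L}(\chi))$, then Corollary~\ref{ss=s for G/P I} yields $\langle v(\chi),\lambda_{s}\rangle \neq 0$ for every $v \in W^{J}$, hence in particular for those minimal $v$ with $X(v)^{ss}_{\lambda_{s}}(\mathcal{L}(\chi)) \neq \phi$.

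For the converse, I would first prove the following monotonicity: whenever $u \leq v$ in the Bruhat order of $W$, one has $\langle v(\chi),\lambda_{s}\rangle \leq \langle u(\chi),\lambda_{s}\rangle$. The argument decomposes $u \leq v$ into a chain of Bruhat covers $u = v_{0} < v_{1} < \cdots < v_{k} = v$, each of the form $v_{i+1} = v_{i} s_{\beta_{i}}$ with $\beta_{i} \in R^{+}$ and $v_{i}(\beta_{i}) \in R^{+}$ (the standard presentation of a Bruhat cover as right multiplication by a reflection, via the strong exchange condition). Using the identity $v_{i+1}(\chi) = v_{i}(\chi) - \langle \chi,\beta_{i}^{\vee}\rangle v_{i}(\beta_{i})$, one computes
\[
\langle v_{i+1}(\chi),\lambda_{s}\rangle - \langle v_{i}(\chi),\lambda_{s}\rangle = -\langle \chi,\beta_{i}^{\vee}\rangle\,\langle v_{i}(\beta_{i}),\lambda_{s}\rangle.
\]
The factor $\langle \chi,\beta_{i}^{\vee}\rangle$ is non-negative because $\chi$ is dominant, and $\langle v_{i}(\beta_{i}),\lambda_{s}\rangle$ is non-negative because $v_{i}(\beta_{i}) \in R^{+}$ while $\lambda_{s} \in \overline{C(B)}$; so each cover step is non-increasing, and the claim follows by induction.

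With monotonicity in hand, fix any $u \in W^{J}$ and assume the hypothesis of the corollary. If $X(u)^{ss}_{\lambda_{s}}(\mathcal{L}(\chi)) = \phi$, then Lemma~\ref{main lemma} gives $\langle u(\chi),\lambda_{s}\rangle > 0$. Otherwise, choose $v^{*} \leq u$ in $W^{J}$ minimal subject to $X(v^{*})^{ss}_{\lambda_{s}}(\mathcal{L}(\chi)) \neq \phi$; a short transitivity check shows $v^{*}$ is in fact minimal in the entire set $\{v \in W^{J} : X(v)^{ss}_{\lambda_{s}}(\mathcal{L}(\chi)) \neq \phi\}$, since any strictly smaller element with nonempty semistable locus would still lie below $u$ and contradict minimality there. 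The hypothesis then gives $\langle v^{*}(\chi),\lambda_{s}\rangle \neq 0$, and combined with $\langle v^{*}(\chi),\lambda_{s}\rangle \leq 0$ from Lemma~\ref{main lemma} this forces $\langle v^{*}(\chi),\lambda_{s}\rangle < 0$; monotonicity then yields $\langle u(\chi),\lambda_{s}\rangle \leq \langle v^{*}(\chi),\lambda_{s}\rangle < 0$. Either way $\langle u(\chi),\lambda_{s}\rangle \neq 0$ for every $u \in W^{J}$, and Corollary~\ref{ss=s for G/P I} delivers the equality of semistable and stable loci. The principal step requiring care is the monotonicity lemma; once the standard cover-by-reflection presentation is invoked, the sign computation is routine.
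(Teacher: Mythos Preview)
Your proof is correct and follows essentially the same route as the paper: both split into the forward direction (immediate from Corollary~\ref{ss=s for G/P I}) and the converse, where for arbitrary $\tau\in W^{J}$ one either has $X(\tau)^{ss}_{\lambda_{s}}(\mathcal{L}(\chi))=\phi$ (so $\langle\tau(\chi),\lambda_{s}\rangle>0$ by Lemma~\ref{main lemma}) or else $\tau$ dominates some minimal $v$ with $\langle v(\chi),\lambda_{s}\rangle<0$, and monotonicity of $v\mapsto\langle v(\chi),\lambda_{s}\rangle$ along Bruhat order finishes. The only difference is that the paper invokes the known fact $\tau\geq v\Rightarrow\tau(\chi)\leq v(\chi)$ from \cite[Lemma~1.18]{LW} and then pairs with $\lambda_{s}$, whereas you supply a self-contained proof of the needed pairing inequality via Bruhat covers; your argument is slightly more elementary but otherwise the structure is identical.
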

	\begin{proof}
		($\Leftarrow:$)	By Corollary \ref{ss=s for G/P I}, it suffices to prove that $\langle v(\chi),\lambda_{s}\rangle\neq 0$  for all $v\in W^{J}$. Let $\tau\in W^{J}$ be an arbitrary element. If $X(\tau)^{ss}_{\lambda_{s}}(\mathcal{L}(\chi))= \phi$, then by Lemma \ref{main lemma}, we have $\langle \tau(\chi),\lambda_{s}\rangle>0$. Otherwise, $\tau\geq v$ for some minimal $v$ such that 
		$X(v)^{ss}_{\lambda_{s}}(\mathcal{L}(\chi))\neq \phi$. By hypothesis, $\langle v(\chi),\lambda_{s}\rangle\neq 0$. Hence, Lemma \ref{main lemma}  implies $\langle v(\chi),\lambda_{s}\rangle<0$. Further since $\tau\geq v$, we have $\tau(\chi)\leq v(\chi)$ (see \cite[Lemma 1.18 ]{LW}). Therefore, $\langle \tau(\chi),\lambda_{s}\rangle\leq \langle v(\chi),\lambda_{s}\rangle< 0$.
		\\
		(:$\Rightarrow$) Proof follows  from the above Corollary \ref{ss=s for G/P I}.	
	\end{proof}
	\begin{remark}
		For $G=SL(n,\mathbb{C})$, and $r,s$ be integers be such that $1\leq r,s\leq n-1$, we have $(G_{r,n})^{ss}_{\lambda_{s}}(\mathcal{L}(n\omega_{r}))=(G_{r,n})^{s}_{\lambda_{s}}(\mathcal{L}(n\omega_{r}))$ if and only if $n$ does not divide $rs$.
	\end{remark}
	\begin{proof}
		By \cite[Lemma 4.2, p.38]{Torus quotient II}, we have $\langle w(\omega_{r}),\lambda_{s}\rangle\neq 0$ for all $w\in W$ if and only if $n$ does not divide $rs$. Now, the proof follows from Corollary \ref{ss=s for G/P I}.
	\end{proof}
	\section{Minimal Schubert variety admitting semistable points in the minuscule case}
	\begin{lemma}
		Let $\omega_{r}$ be a minuscule fundamental weight $($i.e., $0\leq\langle w_{r},\beta\rangle\leq 1$ for all $\beta\in R^{+}$ $)$. Let $P=P_{S\setminus \{\alpha_{r}\}}$. Write $w_{0}^{S\setminus\{\alpha_{r}\}}(\omega_{r})=\omega_{r}-\displaystyle\sum_{j=1}^{n}a_{j}\alpha_{j}$, $a_{j}\in \mathbb{Z}_{\geq0}$ for all $1\leq j\leq n$.
		Fix $1\leq s \leq n$ and $c$ an integer such that $0\leq c \leq a_{s}$.	Then there is a unique minimal element $\tau_{s,c}\in W^{S\setminus\{\alpha_{r}\}}$ such that $\langle \omega_{r}-\tau_{s,c}(\omega_{r}), \lambda_{s}\rangle=c$.
		\label{main lemma2}
	\end{lemma}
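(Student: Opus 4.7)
The plan is to exploit the distributive-lattice structure on $W^{S\setminus\{\alpha_{r}\}}$ arising from the minuscule hypothesis and to construct $\tau_{s,c}$ explicitly as the element associated to a principal order ideal in the minuscule poset.

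First I would record the main consequence of minuscularity. The orbit map $w \mapsto w(\omega_{r})$ identifies $W^{S\setminus\{\alpha_{r}\}}$ bijectively with $W \cdot \omega_{r}$, and each Bruhat cover $v \lessdot w$ in $W^{S\setminus\{\alpha_{r}\}}$ realises a weight change $w(\omega_{r}) = v(\omega_{r}) - \alpha_{i}$ for a unique simple root $\alpha_{i}$. (Writing $w = s_{\gamma} v$ with $\gamma \in R^{+}$, minuscularity forces $\langle v(\omega_{r}), \gamma \rangle = 1$, and a short argument then identifies $\gamma$ itself with a simple root.) Writing $\omega_{r} - w(\omega_{r}) = \sum_{j=1}^{n} n_{j}(w)\,\alpha_{j}$, the coefficient $n_{j}(w)$ equals the number of occurrences of $s_{j}$ in any reduced expression of $w$; in particular $n_{s}(w) = \langle \omega_{r} - w(\omega_{r}), \lambda_{s}\rangle$ is monotone non-decreasing in Bruhat order, changing by $0$ or $1$ along each cover.

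Next I would invoke the classical identification of $(W^{S\setminus\{\alpha_{r}\}}, \leq)$ with the distributive lattice $J(P_{r})$ of order ideals of the minuscule poset $P_{r}$. Each $p \in P_{r}$ carries a simple-root label $\ell(p) \in S$, and under the correspondence $w \leftrightarrow I_{w}$ one has $n_{s}(w) = |\{p \in I_{w} : \ell(p) = \alpha_{s}\}|$; in particular $a_{s} = |\ell^{-1}(\alpha_{s})|$. The key structural input I use is that $P_{r}^{s} := \ell^{-1}(\alpha_{s})$ forms a chain in $P_{r}$ --- a well-known feature of minuscule posets, verifiable case by case (for example, for the Grassmannian the poset $P_{r}$ is a rectangle with labels constant along diagonals, so each $P_{r}^{s}$ is a single diagonal).

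Enumerate $P_{r}^{s}$ as $p_{1} \prec \cdots \prec p_{a_{s}}$ and define $\tau_{s,c}$ to be the element of $W^{S\setminus\{\alpha_{r}\}}$ corresponding to the principal order ideal $\langle p_{c}\rangle := \{q \in P_{r} : q \preceq p_{c}\}$ (with $\tau_{s,0} = e$ corresponding to the empty ideal). Since $P_{r}^{s}$ is a chain, $\langle p_{c}\rangle \cap P_{r}^{s} = \{p_{1}, \ldots, p_{c}\}$, so $n_{s}(\tau_{s,c}) = c$. Conversely, if $w \in W^{S\setminus\{\alpha_{r}\}}$ satisfies $n_{s}(w) = c$, then $I_{w} \cap P_{r}^{s}$ is an initial segment of the chain $P_{r}^{s}$ of cardinality $c$, hence equals $\{p_{1}, \ldots, p_{c}\}$; in particular $p_{c} \in I_{w}$, which forces $\langle p_{c}\rangle \subseteq I_{w}$ and thus $\tau_{s,c} \leq w$. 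This gives both existence and uniqueness of the minimum. The main obstacle is the chain property of $P_{r}^{s}$, which is the only nontrivial input; a fully self-contained proof would handle it either type-by-type (types $A_{n}$, $D_{n}$, $E_{6}$, $E_{7}$) or via the general machinery of d-complete posets.
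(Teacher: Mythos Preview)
Your argument is correct, and it takes a genuinely different (and cleaner) route than the paper. The paper works directly with reduced expressions of $w_{0}^{S\setminus\{\alpha_{r}\}}$: invoking Stembridge's result that minuscule elements are fully commutative, it locates the latest position $l_{1}$ in some reduced word whose suffix has $\alpha_{s}$-count equal to~$1$, then the latest $l_{2}<l_{1}$ whose suffix has $\alpha_{s}$-count~$2$, and so on, declaring the resulting suffixes to be $\tau_{s,1},\tau_{s,2},\dots$; the uniqueness claim is asserted rather than argued. Your approach instead packages the same combinatorics through the identification of $(W^{S\setminus\{\alpha_{r}\}},\leq)$ with the lattice $J(P_{r})$ of order ideals of the heap poset, and this makes both existence and uniqueness of the minimum completely transparent: once $P_{r}^{s}=\ell^{-1}(\alpha_{s})$ is a chain, any ideal with exactly $c$ elements of label $\alpha_{s}$ must contain the principal ideal $\langle p_{c}\rangle$, and that principal ideal is itself such an ideal. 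One small sharpening: you describe the chain property of $P_{r}^{s}$ as the main obstacle requiring a type-by-type check, but it is in fact automatic from the heap construction---two occurrences of the same generator in a reduced word can never be made adjacent by commutation moves (else the word would not be reduced), so they are always comparable in the heap. Thus no case analysis is needed, and your argument is in fact uniform.
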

	\begin{proof}
		From \cite[Proposition 2.1, p.725]{Stem}, any two reduced expressions of $w_{0}^{S\setminus\{\alpha_{r}\}}$ differ only by commuting relations. Therefore, for every integer  $1\leq j\leq n$, the number of times $s_{j}$ appears in a reduced expression of $w_{0}^{S\setminus\{\alpha_{r}\}}$ is exactly $a_{j}$.\\
		Let $k=l(w_{0}^{S\setminus\{\alpha_{r}\}})$. So, let $1\leq l_{1}\leq k$ be the largest integer such that there is a reduced expression $w_{0}^{S\setminus\{\alpha_{r}\}}=s_{i_{1}}s_{i_{2}}\cdots s_{i_{l_{1}-1}}s_{i_{l_{1}}}\cdots s_{i_{k}}$ such that \[\langle \omega_{r}-s_{i_{l_{1}}}\cdots s_{i_{k}}(\omega_{r}), \lambda_{s}\rangle=1\]
		Let $1\leq l_{2}< l_{1}$ be the largest integer such that there is a reduced expression $w_{0}^{S\setminus\{\alpha_{r}\}}=s_{i_{1}}s_{i_{2}}\cdots s_{i_{l_{2}-1}}s_{i_{l_{2}}}\cdots s_{i_{l_{1}-1}}s_{i_{l_{1}}}\cdots s_{i_{k}}$ such that \[\langle \omega_{r}-s_{i_{l_{2}}}\cdots s_{i_{l_{1}}}\cdots s_{i_{k}}(\omega_{r}), \lambda_{s}\rangle=2\]
		Proceeding like this, we can obtain a decreasing sequence $k\geq l_{1}>l_{2}>l_{3}>\cdots >l_{a_{s}}\geq 1$ of positive integers such that for any integer $1\leq c\leq a_{s}$, \[\tau_{s,c}=s_{i_{l_{c}}}\cdots s_{i_{l_{c-1}}}\cdots s_{i_{l_{2}}}\cdots s_{i_{l_{1}-1}}s_{i_{l_{1}}}\cdots s_{i_{k}}\] is the unique minimal element of $W^{S\setminus\{\alpha_{r}\}}$ such that 
		\[\langle \omega_{r}-\tau_{s,c}(\omega_{r}), \lambda_{s}\rangle=c.\]
	\end{proof}
	\begin{remark} For $0\leq c_{1}\leq c_{2}\leq a_{s}$, we have $\tau_{s,c_{1}}\leq \tau_{s,c_{2}}$.
		\label{remark in min element}
	\end{remark}
	\begin{corollary} 
		Let $\omega_{r}$ be a minuscule fundamental weight. Let  $P=P_{S\setminus\{\alpha_{r}\}}$. 	Let $m\in \mathbb{N}$ be the least positive integer such that $m\omega_{r}$ is in the root lattice. Fix $1\leq s\leq n$. Then there is a unique minimal element $w_{s,r}\in W^{S\setminus\{\alpha_{r}\}}$ such that $X(w_{s,r})^{ss}_{\lambda_{s}}(\mathcal{L}(m\omega_{r}))\neq\phi$.
		\label{unique min}
	\end{corollary}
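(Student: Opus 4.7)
The plan is to translate the semistability condition into an arithmetic inequality on the integer $c_w := \langle \omega_r - w(\omega_r), \lambda_s \rangle$, and then deduce the unique minimum directly from Lemma \ref{main lemma2}.

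First I would apply Lemma \ref{main lemma} with $\chi = m\omega_r$. Since $m > 0$, the conclusion is that $X(w)^{ss}_{\lambda_s}(\mathcal{L}(m\omega_r)) \neq \phi$ iff $\langle w(\omega_r), \lambda_s \rangle \leq 0$, equivalently $c_w \geq \langle \omega_r, \lambda_s \rangle$. Because $w(\omega_r) \leq \omega_r$ in the root order, $\omega_r - w(\omega_r)$ is a nonnegative integral combination of simple roots, so $c_w \in \mathbb{Z}_{\geq 0}$; consequently the condition becomes $c_w \geq c_0 := \lceil \langle \omega_r, \lambda_s \rangle \rceil$.

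Next I would check that $0 \leq c_0 \leq a_s$, so that Lemma \ref{main lemma2} applies at $c = c_0$. The lower bound uses dominance of $\omega_r$: writing $\omega_r = \sum_j b_j \alpha_j$ we have $b_j \geq 0$, whence $\langle \omega_r, \lambda_s \rangle = b_s \geq 0$. For the upper bound, $W_{S\setminus\{\alpha_r\}}$ is the stabilizer of $\omega_r$ in $W$, so $w_0^{S\setminus\{\alpha_r\}}(\omega_r) = w_0(\omega_r) = -\omega_{r^*}$ where $\alpha_{r^*} = -w_0(\alpha_r)$; dominance of $\omega_{r^*}$ gives $\langle w_0^{S\setminus\{\alpha_r\}}(\omega_r), \lambda_s \rangle = -b_s^{(r^*)} \leq 0$. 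Combined with the identity $a_s = \langle \omega_r, \lambda_s \rangle - \langle w_0^{S\setminus\{\alpha_r\}}(\omega_r), \lambda_s \rangle$, this yields $a_s \geq \langle \omega_r, \lambda_s \rangle$, and since $a_s \in \mathbb{Z}$, $a_s \geq c_0$.

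Having $0 \leq c_0 \leq a_s$, Lemma \ref{main lemma2} produces $\tau_{s, c_0} \in W^{S\setminus\{\alpha_r\}}$, the unique minimal element with $c_{\tau_{s, c_0}} = c_0$. I set $w_{s,r} := \tau_{s, c_0}$. Existence is clear, since $c_{w_{s,r}} = c_0 \geq c_0$. For the unique-minimum assertion, take any $w \in W^{S\setminus\{\alpha_r\}}$ with $c_w \geq c_0$; then Lemma \ref{main lemma2} forces $w \geq \tau_{s, c_w}$, and Remark \ref{remark in min element} forces $\tau_{s, c_w} \geq \tau_{s, c_0} = w_{s,r}$ because $c_w \geq c_0$, so $w \geq w_{s,r}$.

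The main technical point is the upper bound $c_0 \leq a_s$, which rests on the identification of $w_0^{S\setminus\{\alpha_r\}}(\omega_r)$ with the lowest weight $-\omega_{r^*}$ of $V(\omega_r)$; everything else is a formal repackaging of Lemma \ref{main lemma}, Lemma \ref{main lemma2}, and Remark \ref{remark in min element}.
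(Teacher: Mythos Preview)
Your proposal is correct and follows essentially the same approach as the paper: both translate the semistability condition via Lemma~\ref{main lemma} into an inequality on the integer coefficient of $\alpha_s$ in $\omega_r - w(\omega_r)$, identify the threshold value (your $c_0 = \lceil \langle \omega_r, \lambda_s\rangle\rceil$ coincides with the paper's $q$, the least positive integer with $qm \geq m_s$), invoke Lemma~\ref{main lemma2} at that threshold to produce $w_{s,r}=\tau_{s,c_0}$, and then use the monotonicity from Remark~\ref{remark in min element} for uniqueness. Your uniqueness step is phrased slightly more directly (showing $w\geq w_{s,r}$ for every $w$ with the property, rather than only for minimal competitors), but the content is the same; the only small omission is that you implicitly use $c_w\leq a_s$ when applying Lemma~\ref{main lemma2} to an arbitrary $w$, which follows from $w\leq w_0^{S\setminus\{\alpha_r\}}$.
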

	\begin{proof}
		By Lemma \ref{main lemma}, it suffices to prove that there is a unique minimal element $w_{s,r}\in W^{S\setminus\{\alpha_{r}\}}$  such that $\langle w_{s,r}(m\omega_{r}), \lambda_{s}\rangle\leq 0$. \\
		Write $m\omega_{r}=\displaystyle\sum_{j=1}^{n}m_{j}\alpha_{j}$, where $m_{j}\in \mathbb{N}$ for all $1\leq j\leq n$.\\
		With the notation as in the Lemma \ref{main lemma2}, \[w_{0}^{S\setminus\{\alpha_{r}\}}(m\omega_{r})=m\omega_{r}-\displaystyle\sum_{j=1}^{n}ma_{j}\alpha_{j}=\displaystyle\sum_{j=1}^{n}(m_{j}-ma_{j})\alpha_{j}\] Since $w_{0}^{S\setminus\{\alpha_{r}\}}(m\omega_{r})=w_{0}(m\omega_{r})\leq 0$, we have $a_{s}m\geq m_{s}$. 	Let $q\in\mathbb{N}$ be the least positive integer such that $qm\geq m_{s}$. Then, we have $1\leq q\leq a_{s}$. By Lemma \ref{main lemma2} above, there is a unique minimal element $\tau_{s,q}\in W^{S\setminus\{\alpha_{r}\}}$  such that \begin{equation}
			\langle\omega_{r}-\tau_{s,q}(\omega_{r}), \lambda_{s}\rangle=q
		\end{equation}
		Therefore, $\langle \tau_{s,q}(m\omega_{r}), \lambda_{s}\rangle=\langle m \omega_{r}, \lambda_{s}\rangle-qm$= $m_{s}-qm\leq 0$. 
		\\
		Take $w_{s,r}=\tau_{s,q}$.
		Now we prove the uniqueness of $w_{s,r}$.
		Let $v\in W^{{S\setminus\{\alpha_{r}\}}}$ be a minimal element such that $\langle v(m\omega_{r}), \lambda_{s}\rangle\leq 0$. Let $v(\omega_{r})=\omega_{r}-\displaystyle\sum_{j=1}^{n}b_{j}\alpha_{j}$, $b_{j}\in \mathbb{Z}_{\geq0}$, for $1\leq j\leq n$.
		We have $\langle v(m\omega_{r}), \lambda_{s}\rangle=m_{s}-mb_{s}\leq 0$. This implies $b_{s}m\geq m_{s}$. Since $q$ is the least positive integer such that $qm\geq m_{s}$, we get \begin{equation}
			b_{s}\geq q
		\end{equation}
		
		Also $v\leq w_{0}^{S\setminus \{\alpha_{r}\}}$. Since $m\omega_{r}$ is dominant we have $ v(m\omega_{r})\geq w_{0}^{S\setminus \{\alpha_{r}\}}(m\omega_{r})$ (see \cite[Lemma 1.18, p.183]{LW}). This implies $m_{s}-mb_{s}\geq m_{s}-ma_{s}$. Hence \begin{equation}
			b_{s}\leq a_{s}
		\end{equation}	
		Therefore, from (4.2) and (4.3), we have  $1\leq q\leq b_{s}\leq a_{s}$. Note that $v$ is a minimal element such that 
		$\langle \omega_{r}-v(\omega_{r}),\lambda_{s}\rangle=b_{s}$. Therefore, by uniqueness of $\tau_{s,b_{s}}$ in Lemma \ref{main lemma2}, we have  $\tau_{s,b_{s}}=v$. Then  Remark \ref{remark in min element} above implies that $w_{s,r}\leq v$. Therefore, using minimality of $v$, we have $w_{s,r}=v$. Hence, $X(w_{s,r})$ is the unique minimal dimensional Schubert variety admitting semistable point.
	\end{proof}
	\section{Description  of $X(w_{s,r})$}
	In this section, we will give the description of the $w_{s,r}$ for all minuscule fundamental weights $\omega_{r}$ and for all $1\leq s\leq n$. Let $m$ be a positive integer such that $m\omega_{r}$ is in the root lattice.
	\begin{lemma}
		Let $w\in W^{S\setminus\{\alpha_{r}\}}$ be such that $w^{-1}(\alpha_{q})<0$, for some $\alpha_{q}\in S$. Then we have $s_{q}w\in W^{S\setminus\{\alpha_{r}\}}$ and $l(s_{q}w)=l(w)-1$. 
		\label{schubert divisor}
		\begin{proof}
			Note that $w(\alpha)>0$ for all $\alpha\in S\setminus \{\alpha_{r}\}$. Since $w^{-1}(\alpha_{q})<0$, $w(\alpha)\neq \alpha_{q}$ for all $\alpha\in S\setminus\{\alpha_{r}\}$. Therefore, $s_{q}w(\alpha)>0$ for all $\alpha\in S\setminus\{\alpha_{r}\}$ (using   \cite[Lemma B, p. 50]{Hum1}) and hence, $s_{q}w\in W^{S\setminus\{\alpha_{r}\}}$. From \cite[Lemma 1.6, p.12]{Hum3}, we have $l(s_{q}w)=l(w)-1$.
		\end{proof}
	\end{lemma}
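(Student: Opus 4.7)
The statement has two assertions, and I would tackle them independently using standard Coxeter group facts.

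For the membership $s_{q}w \in W^{S \setminus \{\alpha_{r}\}}$, I would unfold the definition and check $(s_{q}w)(\alpha) > 0$ for every $\alpha \in S \setminus \{\alpha_{r}\}$. Since $w \in W^{S \setminus \{\alpha_{r}\}}$, we already have $w(\alpha) \in R^{+}$ for such $\alpha$. The key intermediate observation is that $w(\alpha)$ cannot equal $\alpha_{q}$: otherwise $\alpha = w^{-1}(\alpha_{q})$, but the right-hand side lies in $R^{-}$ by hypothesis while $\alpha \in S \subset R^{+}$, a contradiction. So $w(\alpha)$ is a positive root distinct from $\alpha_{q}$, and then the standard fact that $s_{q}$ permutes $R^{+} \setminus \{\alpha_{q}\}$ (Humphreys, Lemma B, p.~50) gives $s_{q}w(\alpha) > 0$.

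For the length statement $\ell(s_{q}w) = \ell(w) - 1$, I would invoke the standard Coxeter-theoretic criterion: for a simple reflection $s_{q}$ and arbitrary $w \in W$, we have $\ell(s_{q}w) < \ell(w)$ if and only if $w^{-1}(\alpha_{q}) \in R^{-}$, and in that case the length drops by exactly one (Humphreys, Lemma 1.6, p.~12, applied to $w^{-1}$, or equivalently the strong exchange condition). Since $w^{-1}(\alpha_{q}) < 0$ is precisely our hypothesis, this yields $\ell(s_{q}w) = \ell(w) - 1$ directly.

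Neither step is really an obstacle; the only subtlety is the first one, making sure the excluded value $\alpha_{q}$ among positive roots does not occur in the image $w(S \setminus \{\alpha_{r}\})$, and this is exactly where the hypothesis $w^{-1}(\alpha_{q}) < 0$ is used. Everything else is a one-line appeal to standard references, so the proof should be short.
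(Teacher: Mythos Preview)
Your proposal is correct and follows essentially the same argument as the paper's proof: both use the observation that $w(\alpha)\neq\alpha_{q}$ together with the fact that $s_{q}$ permutes $R^{+}\setminus\{\alpha_{q}\}$ (Humphreys, Lemma~B) to get $s_{q}w\in W^{S\setminus\{\alpha_{r}\}}$, and then invoke the standard length criterion (Humphreys, Lemma~1.6) for $\ell(s_{q}w)=\ell(w)-1$. There is no meaningful difference between the two.
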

	\begin{lemma}
		For any reduced expression $w_{s,r}=s_{i_{1}}\cdots s_{i_{k}}$, we have $i_{1}=s$.
		\label{reduced expression}
	\end{lemma}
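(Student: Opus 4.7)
The plan is to argue by contradiction using the minimality of $w_{s,r}$ established in Corollary \ref{unique min}. Suppose a reduced expression $w_{s,r} = s_{i_{1}} \cdots s_{i_{k}}$ has $i_{1} \neq s$, and set $w' := s_{i_{1}} w_{s,r} = s_{i_{2}} \cdots s_{i_{k}}$. I would first use the length drop $\ell(w') = \ell(w_{s,r}) - 1$ to conclude $w_{s,r}^{-1}(\alpha_{i_{1}}) < 0$, so that Lemma \ref{schubert divisor} yields $w' \in W^{S\setminus\{\alpha_{r}\}}$ together with $w' < w_{s,r}$ in the Bruhat order.

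The next step is a direct computation of the pairing $\langle w'(m\omega_{r}), \lambda_{s}\rangle$. Using the simple reflection formula $s_{i_{1}}(\mu) = \mu - \langle \mu, \alpha_{i_{1}}\rangle \alpha_{i_{1}}$ applied to $\mu = w_{s,r}(m\omega_{r})$, and pairing against $\lambda_{s}$, one obtains
\[
\langle w'(m\omega_{r}),\lambda_{s}\rangle \;=\; \langle w_{s,r}(m\omega_{r}),\lambda_{s}\rangle \;-\; m\,\langle w_{s,r}(\omega_{r}),\alpha_{i_{1}}\rangle\,\langle \alpha_{i_{1}},\lambda_{s}\rangle.
\]
Because $\langle \alpha_{j},\lambda_{s}\rangle = \delta_{j,s}$ by the definition of $\lambda_{s}$, the hypothesis $i_{1}\neq s$ kills the second term, leaving $\langle w'(m\omega_{r}),\lambda_{s}\rangle = \langle w_{s,r}(m\omega_{r}),\lambda_{s}\rangle$, which is $\leq 0$ by Corollary \ref{unique min}. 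Lemma \ref{main lemma} then gives $X(w')^{ss}_{\lambda_{s}}(\mathcal{L}(m\omega_{r})) \neq \phi$, contradicting the minimality of $w_{s,r}$. Hence $i_{1} = s$.

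There is no serious obstacle here: the only delicate point is ensuring that dropping the leftmost simple reflection from a reduced expression of $w_{s,r}$ keeps us inside the set of minimal coset representatives $W^{S\setminus\{\alpha_{r}\}}$, and this is precisely what Lemma \ref{schubert divisor} is designed to guarantee. The remaining ingredients are the orthogonality $\langle \alpha_{j},\lambda_{s}\rangle = 0$ for $j \neq s$, which forces a simple reflection $s_{j}$ with $j\neq s$ to leave the $\lambda_{s}$-pairing unchanged, combined with the defining minimality of $w_{s,r}$ from Corollary \ref{unique min}.
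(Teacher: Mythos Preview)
Your argument is correct and follows essentially the same route as the paper's proof: both proceed by contradiction, invoke Lemma~\ref{schubert divisor} to get $s_{i_{1}}w_{s,r}\in W^{S\setminus\{\alpha_{r}\}}$ of strictly smaller length, show the pairing with $\lambda_{s}$ is unchanged, and then apply Lemma~\ref{main lemma} to contradict minimality. The only cosmetic difference is that the paper obtains $\langle s_{i_{1}}w_{s,r}(\omega_{r}),\lambda_{s}\rangle=\langle w_{s,r}(\omega_{r}),\lambda_{s}\rangle$ by noting $s_{i_{1}}(\lambda_{s})=\lambda_{s}$ and using $W$-invariance of the pairing, whereas you compute directly via the reflection formula on the weight side; both are equivalent one-line calculations.
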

	\begin{proof}
		Assume, on the contrary, that there exists a reduced expression $w_{s,r}=s_{j_{1}}\cdots s_{j_{k}}$ such that $j_{1}\neq s$. 
		From \cite[Corollary 10.2]{Hum1}, we have $w_{s,r}^{-1}(\alpha_{j_{1}})<0$. Then, by Lemma \ref{schubert divisor}, $X(s_{j_{1}}w_{s,r})$ is a Schubert divisor in $X(w_{s,r})$.
		Now $s_{j_{1}}(\lambda_{s})=\lambda_{s}-\langle\alpha_{j_{1}},\lambda_{s}\rangle\alpha_{j_{1}}=\lambda_{s}-\delta_{j_{1},s}\alpha_{j_{1}}=\lambda_{s}$. Since $\langle.,.\rangle$ is $W$-invariant, we have 
		$\langle s_{j_{1}}w_{s,r}(\omega_{r}), \lambda_{s}\rangle=\langle w_{s,r}(\omega_{r}), \lambda_{s}\rangle$. Then Lemma \ref{main lemma} implies $\langle s_{j_{1}}w_{s,r}(\omega_{r}), \lambda_{s}\rangle\leq 0$.\\
		Again using Lemma \ref{main lemma}, we have $X(s_{j_{1}}w_{s,r})^{ss}_{\lambda_{s}}(\mathcal{L}(m\omega_{r}))\neq \phi$. This gives a contradiction, since  dim $X(s_{j_{1}}w_{s,r})<$ dim $X(w_{s,r})$.
	\end{proof}
	\begin{corollary}
		$w_{s,r}^{{-1}}\in W^{S\setminus\{
			\alpha_{s}\}}$ for all $1\leq s,r\leq n$.
		\label{unique pick}
	\end{corollary}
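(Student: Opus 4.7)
The plan is to deduce this directly from Lemma \ref{reduced expression}. By the definition of $W^{S\setminus\{\alpha_s\}}$ recalled in Section 2, the condition $w_{s,r}^{-1}\in W^{S\setminus\{\alpha_s\}}$ amounts to the assertion that $w_{s,r}^{-1}(\alpha_i)\in R^+$ for every simple root $\alpha_i$ with $i\neq s$, so that is what I will verify.

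I will argue by contradiction. Suppose there exists $q\neq s$ such that $w_{s,r}^{-1}(\alpha_q)\in R^-$. The key step is the standard left-descent characterization of length (e.g.\ \cite[Corollary 10.2]{Hum1}, which was already invoked in the proof of Lemma \ref{reduced expression}): the condition $w_{s,r}^{-1}(\alpha_q)<0$ is equivalent to $\ell(s_q w_{s,r})=\ell(w_{s,r})-1$. Prepending $s_q$ to a reduced expression of $s_q w_{s,r}$ then produces a reduced expression $w_{s,r}=s_q s_{i_2}\cdots s_{i_k}$ whose first factor is $s_q$.

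But Lemma \ref{reduced expression} asserts that the first factor of every reduced expression of $w_{s,r}$ equals $s_s$. Hence $q=s$, contradicting the choice $q\neq s$, and the corollary follows. I do not expect any serious obstacle: the nontrivial content has already been absorbed into Lemma \ref{reduced expression}, and this corollary is essentially a repackaging via the descent characterization of the set of minimal coset representatives $W^{S\setminus\{\alpha_s\}}$.
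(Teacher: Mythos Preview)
Your proof is correct and follows essentially the same approach as the paper's: both argue by contradiction that $w_{s,r}^{-1}(\alpha_q)>0$ for every $q\neq s$ via the left-descent characterization $w^{-1}(\alpha_q)<0 \Leftrightarrow \ell(s_qw)=\ell(w)-1$. The only cosmetic difference is that the paper re-derives the contradiction directly from the minimality of $w_{s,r}$ (using Lemma~\ref{schubert divisor} and Lemma~\ref{main lemma}) rather than citing Lemma~\ref{reduced expression}, so your presentation is in fact slightly more streamlined.
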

	\begin{proof}
		Let $w_{s,r}^{{-1}}=s_{i_{k}}\cdots s_{i_{1}}$ be a reduced expression. Then Lemma \ref{reduced expression} implies $i_{1}=s$. From \cite[Corollary 10.2]{Hum1}, $w_{s,r}^{-1}(\alpha_{s})<0$. We prove that $w_{s,r}^{-1}(\alpha_{q})>0$ for all $\alpha_{q}\in S\setminus\{\alpha_{s}\}$.
		Assume that  $w_{s,r}^{-1}(\alpha_{q})<0$ for some $q\neq s$. Then from Lemma \ref{schubert divisor}, $s_{q}w_{s,r}\in W^{S\setminus \{\alpha_{r}\}}$ and dim $X(s_{q}w_{s,r})$=dim $X(w_{s,r})-1$. Since $q\neq s$, we have  $\langle s_{q}w_{s,r}(\omega_{r}), \lambda_{s}\rangle=\langle w_{s,r}(\omega_{r}), \lambda_{s}\rangle
		\leq 0$. Hence $X(s_{q}w_{s,r})^{ss}_{\lambda_{s}}(\mathcal{L}(m\omega_{r}))\neq \phi$ (see Lemma \ref{main lemma}). This gives a contradiction, because $X(w_{s,r})$ is the minimal dimensional Schubert variety admitting semistable point. Hence, $w_{s,r}^{-1}(\alpha_{q})>0$ for all $\alpha_{q}\in S\setminus\{\alpha_{s}\}$. Thus $w_{s,r}^{-1}\in W^{S\setminus\{\alpha_{s}\}}$.
	\end{proof}
	\begin{lemma} 
		Fix $1\leq s,r\leq n$. Then we have	\begin{enumerate}[label=(\roman*)]
			\item	$-1< \langle w_{s,r}(\omega_{r}),\lambda_{s}\rangle\leq0$. \item $\langle w_{s,r}(\omega_{r}),\lambda_{s}\rangle\neq0$ if and only if $X(w_{s,r})^{ss}_{\lambda_{s}}(\mathcal{L}(m\omega_{r}))=X(w_{s,r})^{s}_{\lambda_{s}}(\mathcal{L}(m\omega_{r}))$.
		\end{enumerate}
		\label{ss=s,min}
	\end{lemma}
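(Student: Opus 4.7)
The plan is to extract (i) directly from the explicit construction of $w_{s,r}$ given in the proof of Corollary~\ref{unique min}, and to obtain (ii) by combining the semistability-equals-stability criterion of Lemma~\ref{ss=s} with the minimality property of $w_{s,r}$ together with Lemma~\ref{main lemma}.

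For (i), I would recall from the proof of Corollary~\ref{unique min} that, writing $m\omega_{r} = \sum_{j=1}^{n} m_{j}\alpha_{j}$ with $m_{j} \in \mathbb{N}$, the element $w_{s,r}$ equals $\tau_{s,q}$, where $q$ is the least positive integer satisfying $qm \geq m_{s}$, and moreover $\langle \omega_{r} - w_{s,r}(\omega_{r}),\lambda_{s}\rangle = q$ by Lemma~\ref{main lemma2}. Since $\langle \omega_{r},\lambda_{s}\rangle = m_{s}/m$ (because $\langle \alpha_{j},\lambda_{s}\rangle = \delta_{j,s}$), we get $\langle w_{s,r}(\omega_{r}),\lambda_{s}\rangle = m_{s}/m - q$. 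The minimality of $q$ means $(q-1)m < m_{s} \leq qm$, which on dividing by $m$ becomes $-1 < m_{s}/m - q \leq 0$, giving (i).

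For (ii), I would apply Lemma~\ref{ss=s} with $w = w_{s,r}$ and $\chi = m\omega_{r}$: the equality $X(w_{s,r})^{ss}_{\lambda_{s}}(\mathcal{L}(m\omega_{r})) = X(w_{s,r})^{s}_{\lambda_{s}}(\mathcal{L}(m\omega_{r}))$ holds if and only if $\langle v(m\omega_{r}),\lambda_{s}\rangle \neq 0$ for every $v \leq w_{s,r}$ in $W^{S\setminus\{\alpha_{r}\}}$. For the forward direction of (ii), assume $\langle w_{s,r}(\omega_{r}),\lambda_{s}\rangle \neq 0$; by part (i) this value is strictly negative, hence so is $\langle w_{s,r}(m\omega_{r}),\lambda_{s}\rangle$, handling the $v = w_{s,r}$ case. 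For every strictly smaller $v < w_{s,r}$ in $W^{S\setminus\{\alpha_{r}\}}$, the minimality asserted in Corollary~\ref{unique min} forces $X(v)^{ss}_{\lambda_{s}}(\mathcal{L}(m\omega_{r})) = \phi$, and Lemma~\ref{main lemma} then yields $\langle v(m\omega_{r}),\lambda_{s}\rangle > 0$, in particular nonzero; the criterion of Lemma~\ref{ss=s} is therefore met. Conversely, if $\langle w_{s,r}(\omega_{r}),\lambda_{s}\rangle = 0$, then the choice $v = w_{s,r}$ violates the criterion of Lemma~\ref{ss=s}, so the two sets differ; equivalently, by Lemma~\ref{CSS} the point $w_{s,r}P/P$ has Hilbert--Mumford weight zero for $\lambda_{s}$ and is thus semistable but not stable.

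No step looks like a serious obstacle: (i) is essentially bookkeeping from the construction of $w_{s,r}$ in Corollary~\ref{unique min}, and (ii) is a clean combination of Lemma~\ref{ss=s} with the minimality of $w_{s,r}$ and Lemma~\ref{main lemma}. The only place that requires mild care is reading off the value $\langle \omega_{r},\lambda_{s}\rangle = m_{s}/m$ from the root-lattice expansion of $m\omega_{r}$, which is immediate from bilinearity of $\langle -,-\rangle$.
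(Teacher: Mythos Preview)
Your proof is correct. Part~(ii) is argued exactly as in the paper. For part~(i), however, you take a different route: you read off the explicit value $\langle w_{s,r}(\omega_{r}),\lambda_{s}\rangle = m_{s}/m - q$ from the construction in the proof of Corollary~\ref{unique min} and then use the arithmetic characterization of $q$ as the least integer with $qm\geq m_{s}$. The paper instead argues more intrinsically via Lemma~\ref{reduced expression}: since every reduced expression of $w_{s,r}$ begins with $s_{s}$, the element $s_{s}w_{s,r}$ gives a Schubert divisor with no semistable points, whence $\langle s_{s}w_{s,r}(\omega_{r}),\lambda_{s}\rangle>0$; together with $s_{s}w_{s,r}(\omega_{r})=w_{s,r}(\omega_{r})+\alpha_{s}$ (from minusculeness) this yields $\langle w_{s,r}(\omega_{r}),\lambda_{s}\rangle>-1$. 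Your approach is shorter and entirely self-contained once Corollary~\ref{unique min} is in hand, while the paper's approach avoids reopening that proof and instead exploits the structural Lemma~\ref{reduced expression}, which is of independent use later in Section~5.
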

	\begin{proof} Proof of $(i):$ Let $w_{s,r}=s_{i_{1}}\cdots s_{i_{k}}$ be a reduced expression. Then in view of Lemma \ref{reduced expression}, we have $i_{1}=s$.  Moreover,
		$X(s_{s}w_{s,r})$ is a Schubert divisor in $X(w_{s,r})$. Then, we have $\langle w_{s,r}(\omega_{r}),\alpha_{s}\rangle=-1=-\langle s_{s}w_{s,r}(\omega_{r}),\alpha_{s}\rangle$. \\
		Now $s_{s}w_{s,r}(\omega_{r})=w_{s,r}(\omega_{r})-\langle w_{s,r}(\omega_{r}),\alpha_{s}\rangle\alpha_{s}$. Since $X(s_{s}w_{s,r})^{ss}_{\lambda_{s}}(\mathcal{L}(m\omega_{r}))=\phi$, by Lemma \ref{main lemma} we have,  $\langle s_{s}w_{s,r}(\omega_{r}),\lambda_{s}\rangle>0$. Therefore, \[\langle w_{s,r}(\omega_{r}),\lambda_{s}\rangle=\langle s_{s}w_{s,r}(\omega_{r}),\lambda_{s}\rangle-\langle s_{s}w_{s,r}(\omega_{r}),\alpha_{s}\rangle=\langle s_{s}w_{s,r}(\omega_{r}),\lambda_{s}\rangle-1 > -1\]
		Proof of $(ii):$ Using Lemma \ref{ss=s}, it suffices to prove that $\langle v(\omega_{r}),\lambda_{s}\rangle \neq 0$ for all $v\leq w_{s,r}$. Since $X(w_{s,r})$ is a minimal dimensional Schubert variety admitting semistable point, Lemma \ref{main lemma} implies $\langle v(\omega_{r}),\lambda_{s}\rangle>0$ for all $v<w_{s,r}$. Therefore, $\langle v(\omega_{r}),\lambda_{s}\rangle \neq 0$ for all $v\leq w_{s,r}$ if and only if $\langle w_{s,r}(\omega_{r}),\lambda_{s}\rangle \neq 0$.	\end{proof}
	In minuscule case, $X(w)^{ss}_{\lambda_{s}}(\mathcal{L}(m\omega_{r}))=X(w)^{s}_{\lambda_{s}}(\mathcal{L}(m\omega_{r}))$ for the unique minimal Schubert variety for which $X(w)^{ss}_{\lambda_{s}}(\mathcal{L}(m\omega_{r}))\neq \phi$ if and only if $(G/P)^{ss}_{\lambda_{s}}(\mathcal{L}(m\omega_{r}))=(G/P)^{s}_{\lambda_{s}}(\mathcal{L}(m\omega_{r}))$. In general minimal dimensional Schubert variety admitting semistable point for the action of $\lambda_{s}$ with respect to a linearized line bundle $\mathcal{L}(\chi)$ need not be unique. So, there is a natural question; is it possible that there may be Schubert variety $X(w)\subseteq G/P$ and a linearized line bundle $\mathcal{L}(\chi)$  such that $X(w)^{ss}_{\lambda_{s}}(\mathcal{L}(\chi))=X(w)^{s}_{\lambda_{s}}(\mathcal{L}(\chi))\neq \phi$, but  $(G/P)^{ss}_{\lambda_{s}}(\mathcal{L}(\chi))\neq(G/P)^{s}_{\lambda_{s}}(\mathcal{L}(\chi))$ ? \\
	The following example illustrates this situation.
	\begin{example}
		Let $G=SL(5,\mathbb{C})$ and $P_{\alpha_{1}}$ be the minimal parabolic subgroup corresponding to root $\alpha_{1}$. Let $\chi=2\omega_{2}+2\omega_{3}+5\omega_{4}$. Note that $\chi=3\alpha_{1}+6\alpha_{2}+7\alpha_{3}+6\alpha_{4}$. Let $w_{1}=s_{2}s_{1}s_{3}s_{2}$, $w_{2}=s_{2}s_{3}s_{4}$. Then $w_{1}(\chi)=\chi-6\alpha_{2}-4\alpha_{3}-2\alpha_{1}$ and $w_{2}(\chi)=\chi-5\alpha_{4}-7\alpha_{3}-9\alpha_{2}$. Therefore,
		\begin{equation}
			\langle w_{1}(\chi),\lambda_{2}\rangle=0
		\end{equation}
		\begin{equation}
			\langle w_{2}(\chi),\lambda_{2}\rangle=-3
		\end{equation}
		Using equation (5.1) and (5.2), both $X(w_{1})$ and $X(w_{2})$ are minimal dimensional Schubert varieties such that $X(w_{1})^{ss}_{\lambda_{2}}(\mathcal{L}(\chi))\neq \phi$, $X(w_{2})^{ss}_{\lambda_{2}}(\mathcal{L}(\chi))\neq \phi$ (see Lemma \ref{main lemma}). Since $\langle v(\chi),\lambda_{2}\rangle\neq 0$ for all $v\leq w_{2}$, by Lemma \ref{ss=s}, we have $X(w_{2})^{ss}_{\lambda_{2}}(\mathcal{L}(\chi))=X(w_{2})^{s}_{\lambda_{2}}(\mathcal{L}(\chi))$. But equation (5.1) implies $(G/P_{\alpha_{1}})^{ss}_{\lambda_{2}}(\mathcal{L}(\chi))\neq (G/P_{\alpha_{1}})^{s}_{\lambda_{2}}(\mathcal{L}(\chi))$ (see Corollary \ref{ss=s for G/P II}).
	\end{example}
	\subsection{Type $A_{n-1}$}
	Note that if $G$ is of type $A_{n-1}$, then every fundamental weight is minuscule (see \cite[p.180]{LW}). Fix an integer $1\leq r\leq n-1$.
	\begin{lemma}
		Let $w\in  W^{S\setminus\{\alpha_{r}\}}$,  $w\neq id$. Then there exists an $i\in \mathbb{N}, i\leq r$ and a sequence $\{a_{j}: i\leq j\leq r\}$ of positive integers  such that the following holds.
		\begin{enumerate}
			\item	$a_{j}\geq j$, for  $i\leq j\leq r$.
			\item $a_{j}<a_{j+1}$ for $i\leq j\leq r-1$.
			\item$w=(s_{a_{i}}s_{a_{i}-1}...s_{i})(s_{a_{i+1}}s_{a_{i+1}-1}...s_{i+1})...(s_{a_{r}}s_{a_{r}-1}...s_{r})$ with
			$l(w)=\displaystyle\sum_{j=i}^{r}(a_{j}-j+1)$.
		\end{enumerate}
		\label{type An min coset}
	\end{lemma}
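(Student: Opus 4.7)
The plan is to exploit the classical identification for type $A_{n-1}$: writing $W = S_n$, the set $W^{S\setminus\{\alpha_r\}}$ consists of Grassmannian permutations $w$ satisfying $w(1) < \cdots < w(r)$ and $w(r+1) < \cdots < w(n)$, such a $w$ is determined by its strictly increasing sequence of values $w(1), \ldots, w(r)$, and its length equals the number of inversions $\sum_{j=1}^{r}(w(j) - j)$.

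Given $w \neq \mathrm{id}$ in $W^{S\setminus\{\alpha_r\}}$, the increasing behavior on $\{r+1, \ldots, n\}$ forces $w(j) \neq j$ for some $j \leq r$. Let $i$ be the smallest such index; then $w(j) = j$ for $j < i$ and, by strict increase on $\{1, \ldots, r\}$, $w(j) \geq j+1$ for all $j \geq i$. Define $a_j := w(j) - 1$ for $i \leq j \leq r$. Condition (1), $a_j \geq j$, is immediate from $w(j) \geq j+1$, and condition (2), $a_j < a_{j+1}$, is immediate from $w(j) < w(j+1)$.

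For (3), set $\sigma_j := s_{a_j} s_{a_j - 1} \cdots s_j$. Reading the simple transpositions right-to-left, one verifies directly that, as a permutation, $\sigma_j$ sends $j \mapsto a_j + 1$, sends $k \mapsto k-1$ for $j+1 \leq k \leq a_j+1$, and fixes every other position. Now compute $(\sigma_i \sigma_{i+1} \cdots \sigma_r)(j)$ for $j \in \{1, \ldots, r\}$: for $j < i$, every $\sigma_k$ with $k \geq i$ fixes $j$ since $j < k$; for $i \leq j \leq r$, the factors $\sigma_r, \sigma_{r-1}, \ldots, \sigma_{j+1}$ all fix $j$ (as $j < k$ for each such $k$), the factor $\sigma_j$ sends $j \mapsto a_j + 1$, and each remaining factor $\sigma_k$ with $k < j$ fixes $a_j + 1$ because the strictly increasing chain $a_i < \cdots < a_{j-1} < a_j$ gives $a_k + 1 < a_j + 1$. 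Hence $(\sigma_i \cdots \sigma_r)(j) = w(j)$ on the whole of $\{1, \ldots, r\}$.

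The word $\sigma_i \cdots \sigma_r$ uses exactly $\sum_{j=i}^{r}(a_j - j + 1) = \sum_{j=1}^{r}(w(j) - j) = \ell(w)$ simple reflections. Since the resulting element agrees with the Grassmannian permutation $w$ on $\{1, \ldots, r\}$, and any element of $W$ agreeing with $w$ on $\{1, \ldots, r\}$ has length at least $\ell(w)$ (with equality only for $w$ itself, which is the one whose values on $\{r+1,\ldots,n\}$ are also increasing), the word must be reduced and its product must equal $w$; the length formula then follows automatically. The main bookkeeping obstacle is the verification in step (3) that the strict inequalities $a_i < a_{i+1} < \cdots < a_r$ are exactly what prevents later factors $\sigma_k$ ($k < j$) from disturbing the images $a_j + 1$ produced by $\sigma_j$, but this reduces to a transparent case check on the three-part description of $\sigma_k$ as a permutation.
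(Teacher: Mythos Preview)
Your proof is correct. The paper itself does not prove this lemma at all: it simply writes ``For proof we refer \cite[Lemma 2.1, p.~85]{s kannan}'' and moves on. So there is no in-paper argument to compare against.

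Your approach is the standard and clean one: identify $W^{S\setminus\{\alpha_r\}}$ with Grassmannian permutations, set $a_j = w(j)-1$, and verify directly that the product $\sigma_i\cdots\sigma_r$ reproduces $w$ on $\{1,\ldots,r\}$. The final step---concluding that the product actually equals $w$ and the word is reduced---is handled correctly via the coset argument: since any $v$ agreeing with $w$ on $\{1,\ldots,r\}$ satisfies $w^{-1}v \in 1 \times S_{n-r} \subseteq W_{S\setminus\{\alpha_r\}}$, one has $\ell(v)\geq\ell(w)$ with equality only for $v=w$, and the word length matches $\ell(w)$. One could alternatively just check directly that $\sigma_i\cdots\sigma_r$ is increasing on $\{r+1,\ldots,n\}$, but your length-based argument is equally valid and slightly slicker.
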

	\begin{proof}	
		For proof we refer \cite[Lemma 2.1, p. 85]{s kannan}.
	\end{proof}	
	In view of Lemma \ref{type An min coset}, there is a reduced expression\\ $(s_{a_{i}}...s_{i})(s_{a_{i+1}}...s_{i+1})...(s_{a_{r-1}}...s_{r}s_{r-1})(s_{a_{r}}...s_{r+1}s_{r})$  of $w_{s,r}$ with $i\leq a_{i}< a_{i+1}<\cdots<a_{r}\leq n-1$.
	\begin{lemma}   We have $a_{i}=s$ and  $a_{j+1}-a_{j}=1$ for all $j=i,i+1,\ldots,r-1$.
		\label{type An}
	\end{lemma}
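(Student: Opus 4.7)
The plan is to derive both assertions from the minimality property of $X(w_{s,r})$ recorded in Corollary \ref{unique min}. The first assertion $a_i = s$ is immediate from Lemma \ref{reduced expression}: every reduced expression of $w_{s,r}$ begins with $s_s$, while the given expression begins with $s_{a_i}$, forcing $a_i = s$.

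For the second assertion, I argue by contradiction. Assume $a_{j+1} > a_j + 1$ for some $j \in \{i, \ldots, r-1\}$, and decompose
\[
w_{s,r} = u_1 \cdot (s_{a_{j+1}} s_{a_{j+1}-1} \cdots s_{j+1}) \cdot u_2,
\]
with $u_1 = (s_{a_i}\cdots s_i)\cdots (s_{a_j}\cdots s_j)$ and $u_2 = (s_{a_{j+2}}\cdots s_{j+2})\cdots (s_{a_r}\cdots s_r)$. Define
\[
w' := u_1 \cdot (s_{a_{j+1}-1} s_{a_{j+1}-2} \cdots s_{j+1}) \cdot u_2,
\]
i.e.\ erase the leftmost letter of the middle block. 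Since $a_j < a_{j+1} - 1$, the sequence $a_i < \cdots < a_j < a_{j+1} - 1 < a_{j+2} < \cdots < a_r$ is still strictly increasing, so Lemma \ref{type An min coset} guarantees that this expression for $w'$ is reduced and that $w' \in W^{S\setminus\{\alpha_r\}}$; the subword property of the Bruhat order then gives $w' < w_{s,r}$ with $\ell(w') = \ell(w_{s,r}) - 1$. If I can establish $\langle w'(\omega_r), \lambda_s \rangle \leq 0$, then Lemma \ref{main lemma} produces a semistable point in $X(w')$, contradicting the uniqueness-of-minimum statement of Corollary \ref{unique min}.

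The main obstacle is precisely this pairing inequality. Setting $\mu := (s_{a_{j+1}-1}\cdots s_{j+1}) u_2(\omega_r)$, one computes
\[
w_{s,r}(\omega_r) - w'(\omega_r) \;=\; u_1 \bigl[s_{a_{j+1}}(\mu) - \mu\bigr] \;=\; -\langle \mu, \alpha_{a_{j+1}} \rangle \, u_1(\alpha_{a_{j+1}}).
\]
Every simple reflection appearing in $u_1$ has index $\leq a_j$. The hypothesis $a_{j+1} > a_j + 1$ (used here in its full strength) means that no index $k \leq a_j$ satisfies $|k - a_{j+1}| \leq 1$, so no such $\alpha_k$ is adjacent to $\alpha_{a_{j+1}}$ in the type $A_{n-1}$ Dynkin diagram; hence $u_1$ fixes $\alpha_{a_{j+1}}$ and the difference $w_{s,r}(\omega_r) - w'(\omega_r)$ is a scalar multiple of $\alpha_{a_{j+1}}$. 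Pairing with $\lambda_s$ yields a multiple of $\delta_{a_{j+1}, s}$; and the strict chain $a_{j+1} > a_j \geq a_i = s$ (using the first assertion) forces $a_{j+1} \neq s$, so $\langle w'(\omega_r), \lambda_s \rangle = \langle w_{s,r}(\omega_r), \lambda_s \rangle \leq 0$, closing the contradiction.
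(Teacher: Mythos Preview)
Your proof is correct and rests on the same key observation as the paper's: the gap $a_{j+1}>a_j+1$ forces every simple reflection in $u_1$ to commute with $s_{a_{j+1}}$, so that reflection can be stripped off without affecting the coefficient of $\alpha_s$. The paper packages the contradiction slightly differently---it commutes $s_{a_{k_0+1}}$ all the way to the front to obtain a reduced expression of $w_{s,r}$ beginning with $s_q$ for $q=a_{k_0+1}\neq s$, and then invokes Corollary~\ref{unique pick} (equivalently Lemma~\ref{reduced expression})---whereas you delete $s_{a_{j+1}}$, verify via Lemma~\ref{type An min coset} that the resulting $w'$ still lies in $W^{S\setminus\{\alpha_r\}}$, and contradict the minimality of $w_{s,r}$ directly through Lemma~\ref{main lemma}. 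Your route is a little more self-contained (it does not rely on Corollary~\ref{unique pick}, whose proof is itself the same minimality argument), while the paper's is more concise; but the substance is the same.
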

	\begin{proof}
		First note that $a_{i}=s$ by Lemma \ref{reduced expression}. If $a_{k+1}-a_{k}\geq 2$ for some $1\leq k\leq r-1$, let $k_{0}$ be the least such integer. Then $w_{s,r}=s_{a_{k_{0}+1}}\cdot v$ for some $v\in W^{S\setminus\{\alpha_{r}\}}$ such that $l(w_{s,r})=l(v)+1$. From  \cite[Corollary 10.2]{Hum1}, we have $w_{s,r}^{-1}(\alpha_{k_{0}+1})<0$. By property $(2)$ of Lemma \ref{type An min coset},  $a_{k_{0}+1}>a_{i}$ and hence $a_{k_{0}+1}\neq s$. This leads to a contradiction to Corollary \ref{unique pick}.
	\end{proof}
	\begin{lemma} Let $1\leq s,r\leq n-1$. Take $p=\lfloor \frac{rs}{n}\rfloor$. Then we have,\\ $w_{s,r}=(s_{s}...s_{p+1})(s_{s+1}...s_{p+2})...(s_{s+r-p-1}...s_{r})$.\label{criterion An}
	\end{lemma}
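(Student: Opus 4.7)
The strategy is to combine the structural results from Lemmas \ref{reduced expression}, \ref{type An min coset}, and \ref{type An}, which pin down a reduced expression of $w_{s,r}$ up to a single unknown integer, with a direct computation of $\langle w_{s,r}(\omega_{r}),\lambda_{s}\rangle$, and then extract that integer from the bound $-1<\langle w_{s,r}(\omega_{r}),\lambda_{s}\rangle\leq 0$ already furnished by Lemma \ref{ss=s,min}(i).

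First, by Lemma \ref{type An min coset}, some reduced expression of $w_{s,r}$ has the shape $(s_{a_{i}}\cdots s_{i})(s_{a_{i+1}}\cdots s_{i+1})\cdots (s_{a_{r}}\cdots s_{r})$ with $i\leq a_{i}<a_{i+1}<\cdots<a_{r}\leq n-1$. Lemma \ref{reduced expression} forces $a_{i}=s$, and Lemma \ref{type An} forces $a_{j+1}=a_{j}+1$; hence $a_{j}=s+(j-i)$ and the only remaining unknown is an integer $i\in\{1,\ldots,s\}$. The claimed formula is precisely the case $i=p+1$, so it suffices to pin down $i$.

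Next, I identify $W$ with $S_{n}$ (with $s_{j}=(j,j+1)$) and compute $w_{s,r}(\{1,\ldots,r\})$. A short check shows that each factor $(s_{a_{j}}\cdots s_{j})$ acts as the cycle $j\mapsto a_{j}+1$, $\ell\mapsto\ell-1$ for $j<\ell\leq a_{j}+1$, and the identity elsewhere. Applying the factors right to left to $\{1,\ldots,r\}$ and using the gap condition $a_{j+1}\geq a_{j}+1$ to ensure that elements already moved out of $\{1,\ldots,r\}$ lie outside the support of subsequent cycles, one obtains by induction on $r-j$ that $w_{s,r}(\{1,\ldots,r\})=\{1,\ldots,i-1\}\cup\{s+1,\ldots,s+r-i+1\}$. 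In the standard realization ($\alpha_{j}=\epsilon_{j}-\epsilon_{j+1}$, $\omega_{r}\equiv \epsilon_{1}+\cdots+\epsilon_{r}$ modulo $\sum_{j}\epsilon_{j}$, and $\langle \epsilon_{j},\lambda_{s}\rangle$ equal to $(n-s)/n$ for $j\leq s$ and $-s/n$ for $j>s$), a direct calculation gives $\langle w(\omega_{r}),\lambda_{s}\rangle = k - rs/n$, where $k=|w(\{1,\ldots,r\})\cap\{1,\ldots,s\}|$. Since $i\leq a_{i}=s$, the explicit set above contributes $k=i-1$, so $\langle w_{s,r}(\omega_{r}),\lambda_{s}\rangle = (i-1)-rs/n$.

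Finally, Lemma \ref{ss=s,min}(i) reads $-1 < (i-1)-rs/n\leq 0$, i.e.\ $rs/n<i\leq rs/n+1$. This half-open interval contains the unique integer $i=\lfloor rs/n\rfloor+1=p+1$, which yields the claimed reduced expression. \emph{The one non-routine step} is the inductive permutation computation in the third paragraph, as the action of successive cyclic factors on the already-transformed set has to be tracked carefully; the remaining inputs are either cited directly from earlier lemmas or amount to evaluating the explicit pairing formula.
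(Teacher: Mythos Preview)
Your proof is correct and follows the same overall architecture as the paper's: reduce via Lemmas \ref{type An min coset}, \ref{reduced expression}, \ref{type An} to a single unknown integer $i$, compute $\langle w_{s,r}(\omega_{r}),\lambda_{s}\rangle$ as a function of $i$, and then extract $i=p+1$ from the bound in Lemma \ref{ss=s,min}(i). The difference lies only in how the pairing is computed. The paper writes $w_{s,r}(\omega_{r})=\omega_{r}-\sum_{k}n_{k}\alpha_{k}$ with $n_{k}$ the number of occurrences of $s_{k}$ in the reduced word, reads off $\langle w_{s,r}(\omega_{r}),\lambda_{s}\rangle=\langle\omega_{r},\lambda_{s}\rangle-n_{s}$, and then counts $n_{s}$ by a case split according to whether $s\geq r$ or $s<r$; in both cases it finds $\langle w_{s,r}(\omega_{r}),\lambda_{s}\rangle=-\frac{rs}{n}+(i-1)$. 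You instead compute $w_{s,r}(\{1,\ldots,r\})$ directly as a permutation and evaluate the pairing in $\epsilon$-coordinates, obtaining the same expression $(i-1)-rs/n$ without any case distinction. Your route is marginally cleaner in that it bypasses the $s\geq r$ versus $s<r$ dichotomy; the paper's route has the virtue of staying entirely within the root/coroot formalism and makes transparent why the minuscule hypothesis (each simple reflection contributes exactly one simple root to $\omega_{r}-w(\omega_{r})$) is what drives the count.
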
	
	\begin{proof}By Lemma \ref{type An}, we have $w_{s,r}=(s_{s}...s_{i+1}s_{i})(s_{s+1}...s_{i+1})..(s_{s+j-i}...s_{j})..(s_{s+r-i}...s_{r})$. It suffices to prove that $i=p+1$. For $1\leq k\leq n$, let $n_{k}$ denotes the number of times simple reflection $s_{k}$ appears in a reduced expression of $w_{s,r}$. Then $w_{s,r}(\omega_{r})=\omega_{r}-\displaystyle\sum_{k=1}^{n}n_{k}\alpha_{k}$. We have,  $\omega_{r}=\frac{1}{n}(\displaystyle\sum_{i=1}^{r-1}i(n-r)\alpha_{i}+\displaystyle\sum_{i=r}^{n-1}r(n-i)\alpha_{i})$ (see \cite[p. 69]{Hum1}).\\
		\textbf{Case 1:} $s\geq r$.  In this case 
		$\langle w_{s,r}(\omega_{r}),\lambda_{s}\rangle=\frac{r(n-s)}{n}-n_{s}$. For every $i\leq j\leq r$, we have  $s\ge r\geq j$. Thus, we have $s+j-i\geq s\geq j$, for all $i\leq j\leq r$. Therefore, $n_{s}=r-i+1$. Hence, $\langle w_{s,r}(\omega_{r}),\lambda_{s}\rangle=-\frac{rs}{n}+i-1$. Therefore, from
		Lemma \ref{ss=s,min} $(i)$, we have \begin{equation}
			-1<-\frac{rs}{n}+i-1\leq 0
		\end{equation}
		\textbf{Case 2:} $s<r$. In this case $\langle w_{s,r}(\omega_{r}),\lambda_{s}\rangle=\frac{s(n-r)}{n}-n_{s}$.
		Since $j\leq s\leq s+j-i$ only for $i\leq j\leq s$, we have $n_{s} =s-i+1$.  Therefore, $\langle w_{s,r}(\omega_{r}),\lambda_{s}\rangle=\frac{s(n-r)}{n}-n_{s}=-\frac{rs}{n}+i-1$.\\
		Now using $(i)$ of Lemma \ref{ss=s,min}, we have
		\begin{equation}		
			-1<-\frac{rs}{n}+i-1\leq0
		\end{equation}
		From expression (5.3) and (5.4), we  see that in both the cases  $\frac{rs}{n}<i\leq \frac{rs}{n}+1$. This implies $i=\lfloor \frac{rs}{n}\rfloor+1$. Moreover, for all $1\leq s,r\leq n-1$ we  get \begin{equation}
			\langle w_{s,r}(\omega_{r}),\lambda_{s}\rangle=-\frac{rs}{n}+\lfloor\frac{rs}{n}\rfloor.
		\end{equation}
	\end{proof}
	\begin{corollary} Fix $1\leq s\leq n-1$. Then
		$X(w_{s,r})^{ss}_{\lambda_{s}}(\mathcal{L}(n\omega_{r}))=X(w_{s,r})^{s}_{\lambda_{s}}(\mathcal{L}(n\omega_{r}))$ if and only if $n\nmid rs$.
		\label{tpye An:ss=s}
	\end{corollary}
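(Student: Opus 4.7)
The plan is to deduce the corollary by combining Lemma~\ref{ss=s,min}(ii) (or rather its obvious analogue for $n\omega_{r}$) with the explicit formula for $\langle w_{s,r}(\omega_{r}),\lambda_{s}\rangle$ that was produced in the proof of Lemma~\ref{criterion An}.

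First I would observe that the unique minimal element $w_{s,r}$ associated to $\mathcal{L}(m\omega_{r})$ in Corollary~\ref{unique min} is also the unique minimal element for $\mathcal{L}(n\omega_{r})$: by Lemma~\ref{main lemma}, $X(w)^{ss}_{\lambda_{s}}(\mathcal{L}(n\omega_{r}))\neq\phi$ iff $\langle w(n\omega_{r}),\lambda_{s}\rangle= n\langle w(\omega_{r}),\lambda_{s}\rangle\le 0$, which is the same condition as for $m\omega_{r}$, since $n$ and $m$ are both positive. This also means that $\langle v(\omega_{r}),\lambda_{s}\rangle>0$ for every $v<w_{s,r}$ in $W^{S\setminus\{\alpha_{r}\}}$, by minimality.

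Next, I would apply Lemma~\ref{ss=s} with $\chi=n\omega_{r}$: the equality $X(w_{s,r})^{ss}_{\lambda_{s}}(\mathcal{L}(n\omega_{r}))=X(w_{s,r})^{s}_{\lambda_{s}}(\mathcal{L}(n\omega_{r}))$ holds iff $\langle v(n\omega_{r}),\lambda_{s}\rangle\neq 0$ for every $v\le w_{s,r}$ in $W^{S\setminus\{\alpha_{r}\}}$. For $v<w_{s,r}$ this is automatic from the previous paragraph (and the fact that $\langle v(n\omega_{r}),\lambda_{s}\rangle=n\langle v(\omega_{r}),\lambda_{s}\rangle$). Therefore the condition collapses to $\langle w_{s,r}(\omega_{r}),\lambda_{s}\rangle\neq 0$.

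Finally, equation $(5.5)$ established in the proof of Lemma~\ref{criterion An} gives
\[
\langle w_{s,r}(\omega_{r}),\lambda_{s}\rangle=-\tfrac{rs}{n}+\bigl\lfloor\tfrac{rs}{n}\bigr\rfloor,
\]
which vanishes exactly when $\tfrac{rs}{n}\in\mathbb{Z}$, i.e.\ when $n\mid rs$. Consequently $\langle w_{s,r}(\omega_{r}),\lambda_{s}\rangle\neq 0$ iff $n\nmid rs$, which proves the corollary. There is no substantial obstacle here: every ingredient is already available from the preceding lemmas, and the only thing to verify carefully is that Lemma~\ref{ss=s,min}(ii), stated for the minimal weight $m\omega_{r}$, transfers to the multiple $n\omega_{r}$, which is immediate because $w_{s,r}$ remains the minimal Schubert variety and the pairing $\langle v(\,\cdot\,),\lambda_{s}\rangle$ is linear in the first slot.
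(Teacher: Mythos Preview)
Your proof is correct and follows essentially the same route as the paper: invoke equation~(5.5) to see that $\langle w_{s,r}(\omega_{r}),\lambda_{s}\rangle\neq 0$ precisely when $n\nmid rs$, and combine this with Lemma~\ref{ss=s,min}(ii). The only difference is that you re-derive the content of Lemma~\ref{ss=s,min}(ii) for $n\omega_{r}$ via Lemma~\ref{ss=s}, whereas the paper simply cites Lemma~\ref{ss=s,min}(ii) directly---this is legitimate because in Section~5 the integer $m$ is taken to be any positive integer with $m\omega_{r}$ in the root lattice, so $m=n$ is already covered.
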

	\begin{proof}
		From expression (5.5), we see that $n\nmid rs$ if and only if $\langle w_{s,r}(\omega_{r}),\lambda_{s}\rangle\neq 0$.  
		Now from Lemma \ref{ss=s,min} $(ii)$, we have  $\langle w_{s,r}(\omega_{r}),\lambda_{s}\rangle\neq 0$ if and only if $X(w_{s,r})^{ss}_{\lambda_{s}}(\mathcal{L}(n\omega_{r}))=X(w_{s,r})^{s}_{\lambda_{s}}(\mathcal{L}(n\omega_{r}))$.
	\end{proof}
	\subsection{Type $B_{n}$}
	Note that  $\omega_{n}$ is the only minuscule fundamental weight in type $B_{n}$ (see \cite[p. 180]{LW}).\\
	Let $w_{j}=s_{n}s_{n-1}\cdots s_{j+1}s_{j}$ for $1\leq j\leq n$.
	Define  $w_{j}(0)=id$, $w_{j}(1)=s_{j}$ and $w_{j}(l_{j})=s_{j+l_{j}-1}\cdots s_{j+1}s_{j}$ for $2\leq l_{j}\leq n+1-j$.\\
	We recall the following theorem from \cite{min coset} (see \cite[Theorem 6, p.708]{min coset}). Note that the our convention is left coset, whereas in \cite{min coset} it was proved for right coset. So, the theorem is stated with suitable modification.
	\begin{theorem}
		$W^{S\setminus\{\alpha_{n}\}}=\{w_{1}(l_{1})w_{2}(l_{2})\cdots w_{n-1}(l_{n-1})w_{n}(l_{n}):$
		
		\ $(1)$ $0\leq l_{k}\leq n+1-k$,
		$(2)$ $l_{k-1}\leq l_{k}+1$ \ and \
		$(3)$ $l_{k-1}\leq l_{k}$, \ if   $l_{k}\leq n-k\}$
		\label{type Bn}
	\end{theorem}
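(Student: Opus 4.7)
The plan is to verify this parametrization in three stages: first, show that each tuple $(l_1,\ldots,l_n)$ satisfying (1)--(3) gives a reduced expression for an element of $W^{S\setminus\{\alpha_n\}}$; second, establish injectivity of the resulting map; third, verify that the number of valid tuples equals $|W^{S\setminus\{\alpha_n\}}| = 2^n$.

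I would work in the signed-permutation realization of $W$, where $W_{S\setminus\{\alpha_n\}} \cong S_n$ acts by unsigned permutations. Then each coset in $W/W_{S\setminus\{\alpha_n\}}$ is uniquely determined by the sign pattern it produces in the orbit $W\cdot\omega_n = \{\tfrac{1}{2}(\epsilon_1 e_1+\cdots+\epsilon_n e_n):\epsilon_i \in \{\pm 1\}\}$, which confirms $|W^{S\setminus\{\alpha_n\}}|=2^n$. In this model, the block $w_j(l_j)=s_{j+l_j-1}\cdots s_{j+1}s_j$ acts as a specific signed cycle that introduces a sign change exactly when $l_j = n+1-j$ (i.e.\ when $s_n$ appears in the block). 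The first stage is a careful inductive analysis showing that conditions (2) and (3) are exactly those under which appending one more block to the product neither triggers a braid-relation cancellation nor destroys the property $w(\alpha_i)>0$ for all $i<n$. The second stage follows because the signed permutation associated to $w$ determines $(l_1,\ldots,l_n)$ by reading off its sign pattern together with its underlying permutation from the index $n$ downward. The third stage follows by an elementary induction on $n$, verifying directly that the count of valid tuples (computable in small cases as $c(1)=2,\ c(2)=4,\ c(3)=8$) equals $2^n$ in general.

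The main obstacle is the first stage, specifically the tightness of conditions (2) and (3). Condition (3), which activates when $l_k \leq n-k$, is precisely what prevents a type-$A_{n-1}$ braid reduction between adjacent blocks $w_{k-1}(l_{k-1})w_k(l_k)$ that would shorten the expression; condition (2), which binds when $l_k = n+1-k$, controls the interaction with the type-$B$ braid $s_{n-1}s_n s_{n-1}s_n = s_n s_{n-1}s_n s_{n-1}$. I would handle this through a case analysis of the two-block product in the signed-permutation model, comparing its length (as a signed-permutation statistic) with the total letter count of the expression, and confirming that equality holds precisely under conditions (1)--(3). Since the original source \cite{min coset} proves the analogous statement for right cosets, an alternative route is to deduce the left-coset form here by applying the inversion $w \mapsto w^{-1}$ and rewriting each reduced expression accordingly, but verifying that the data $(l_1,\ldots,l_n)$ transports correctly would still require essentially the same case analysis.
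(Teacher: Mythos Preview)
The paper does not give its own proof of this statement: it is quoted from Stumbo \cite{min coset}, with only the remark that the right-coset formulation there has been converted to the left-coset convention used here. So there is nothing in the paper to compare your argument against beyond that citation.

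Your plan is reasonable and would succeed if carried out, but stage one is thinner than you acknowledge. You assert that condition~(3) blocks a type-$A$ braid reduction between adjacent blocks and condition~(2) handles the type-$B$ braid, yet you propose to verify reducedness only via a two-block case analysis. Reducedness of the full product $w_1(l_1)\cdots w_n(l_n)$ does not follow from pairwise reducedness of consecutive factors, so this step needs either a global induction or a different mechanism. A cleaner execution of your own idea is to compute $w\cdot\omega_n$ explicitly in the $\tfrac12(\pm e_1\pm\cdots\pm e_n)$ basis, read off the resulting sign pattern and underlying permutation, and check that the signed-permutation length statistic equals $\sum_j l_j$; this simultaneously gives reducedness, membership in $W^{S\setminus\{\alpha_n\}}$, and injectivity, without any braid bookkeeping. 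Your counting stage is fine: the recursion one extracts from conditions (1)--(3) does give $2^n$, as your small cases suggest.

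Your closing remark undersells the alternative route. Inversion sends Stumbo's right-coset representatives, which are products of ascending runs $s_j s_{j+1}\cdots s_{j+l_j-1}$ in the opposite block order, bijectively to the products $w_1(l_1)\cdots w_n(l_n)$ here; the parameters $(l_1,\ldots,l_n)$ and the constraints (1)--(3) are carried over verbatim, with no case analysis required. This is exactly the ``suitable modification'' the paper alludes to, and it is by far the shortest path to the statement.
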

	\begin{lemma}
		Write $s=n+1-j$, for $j=1,\ldots,,n$. Let $p=\lceil \frac{s}{2}\rceil$. Then
		\begin{enumerate}
			\item If $j<p$, then\\
			$w_{s,n}=w_{1}(0)\ldots w_{n-(j+p-1)}(0)w_{n-(j+p-2)}(p)w_{n-(j+p-3)}(p)...w_{n-p}(p)w_{n-(p-1)}(p)\\w_{n-(p-2)}(p-1)\ldots w_{n-j}(j+1)w_{n-(j-1)}(j)w_{n-(j-2)}(j-1)...w_{n-1}(2)w_{n}(1)$
			\item If $j\geq p$ , then\\
			$w_{s,n}=w_{1}(0)\ldots w_{n-(j+p-1)}(0)w_{n-(j+p-2)}(p)...w_{n-j}(p)w_{n-(j-1)}(p)w_{n-(j-2)}(p)\\w_{n-(j-3)}(p)...w_{n-(p-1)}(p)w_{n-(p-2)}(p-1)w_{n-(p-3)}(p-2)...w_{n-1}(2)w_{n}(1)$
			\end{enumerate}
		\label{Type Bn criteion}
	\end{lemma}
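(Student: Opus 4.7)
The plan is to combine the parametrization of $W^{S\setminus\{\alpha_n\}}$ supplied by Theorem~\ref{type Bn} with the criterion of Lemma~\ref{main lemma} and the bound of Lemma~\ref{ss=s,min}(i). In type $B_n$ one has $\omega_n=\tfrac12\sum_{i=1}^{n} i\,\alpha_i$, so $\langle\omega_n,\lambda_s\rangle=s/2$, and $m=2$ is the least positive integer with $m\omega_n$ in the root lattice. Writing $w(\omega_n)=\omega_n-\sum_j a_j\alpha_j$, the condition $\langle w(2\omega_n),\lambda_s\rangle\le 0$ becomes $a_s\ge s/2$, while Lemma~\ref{ss=s,min}(i) at $w=w_{s,n}$ forces $-1<s/2-a_s\le 0$. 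Hence at the minimal element one has $a_s=\lceil s/2\rceil=p$.

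I would first read off the sequence $(l_1,\ldots,l_n)$ from the displayed expressions and verify the three constraints of Theorem~\ref{type Bn}: $0\le l_k\le n+1-k$, $l_{k-1}\le l_k+1$, and $l_{k-1}\le l_k$ whenever $l_k\le n-k$. In both cases the sequence consists of a run of zeros, a plateau of length $j$ at value $p$ over positions $n{-}j{-}p{+}2$ through $n{-}p{+}1$, and a strict descent $p{-}1,p{-}2,\ldots,1$ over positions $n{-}p{+}2$ through $n$; the cases (i) and (ii) differ only in how one groups the descent relative to the index $n-j+1$. All constraints reduce to checks at transition indices and are routine. Since these products are in the normal form of Theorem~\ref{type Bn}, they are automatically reduced of length $\sum_k l_k$.

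Next I would compute $a_s$ for the candidate $w$. Using $\omega_n=\tfrac12(\epsilon_1+\cdots+\epsilon_n)$ and the signed-permutation action of $W$ on the $\epsilon_i$, one has $w(\omega_n)=\tfrac12\sum_i\sigma_i\epsilon_i$ with $\sigma_i\in\{\pm1\}$, and the substitution $\epsilon_i=\alpha_i+\cdots+\alpha_n$ yields $a_s=\#\{i\le s:\sigma_i=-1\}$. Each block $w_k(l_k)=s_{k+l_k-1}\cdots s_k$ is a pure cyclic shift on the $\epsilon$-coordinates when $k+l_k-1<n$ and a cyclic shift composed with the sign flip $\epsilon_n\mapsto-\epsilon_n$ when $k+l_k-1=n$. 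For the candidate, the last plateau block $w_{n-p+1}(p)$ and each descent block $w_{n-p+2}(p-1),\ldots,w_n(1)$ satisfy $k+l_k-1=n$, contributing $p$ sign flips in total. Tracking how the interleaving cyclic shifts distribute these flips, one verifies that they land on exactly $p$ of the first $s$ coordinates, yielding $a_s=p$ and hence $\langle w(2\omega_n),\lambda_s\rangle\le 0$.

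Finally, for minimality I would invoke Corollary~\ref{unique min}: any $v\in W^{S\setminus\{\alpha_n\}}$ with $X(v)^{ss}_{\lambda_s}(\mathcal{L}(2\omega_n))\ne\emptyset$ satisfies $a_s(v)\ge p$, so its signed-permutation representative must flip at least $p$ of its first $s$ coordinates. Using Theorem~\ref{type Bn} one then shows that every admissible tuple $(l_k)$ realizing this has total length $\sum_k l_k\ge\ell(w)$, with equality pinning down the displayed tuple. The main obstacle is exactly this last combinatorial minimization: ruling out admissible tuples of strictly smaller length that still produce the required sign-flip count, which requires a careful analysis of how the constraints $l_{k-1}\le l_k+1$ and $l_{k-1}\le l_k$ (when $l_k\le n-k$) conspire to force the plateau-descent shape of the claimed expression once $a_s=p$ is imposed.
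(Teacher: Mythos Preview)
Your proposal is correct in outline but takes a genuinely different route from the paper, and the difference matters for the step you yourself flag as the main obstacle.

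The paper does not verify a candidate and then separately minimize length. Instead it exploits the minuscule property directly: since any two reduced expressions of a minuscule element differ only by commuting relations (Lemma~\ref{main lemma2}/Stembridge), the coefficient $a_s$ in $\omega_n-w(\omega_n)=\sum_k a_k\alpha_k$ equals the number of times $s_s=s_{n-(j-1)}$ occurs in any reduced expression of $w$. Thus the requirement $a_s=p$ becomes the purely combinatorial statement that $s_s$ appears exactly $p$ times. The paper then writes the unknown $w_{s,n}$ in the normal form $w_1(l_1)\cdots w_n(l_n)$ of Theorem~\ref{type Bn} and asks: which blocks $w_k(l_k)$ can contain $s_s$, and what do the constraints $(1)$--$(3)$ force once $p$ of them must? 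From the shape $w_k(l_k)=s_{k+l_k-1}\cdots s_k$ one sees that $s_s$ occurs in $w_k(l_k)$ iff $k\le s\le k+l_k-1$, i.e.\ iff $l_k\ge s-k+1$. Using conditions $(2)$ and $(3)$ of Theorem~\ref{type Bn} to propagate lower bounds leftward from the index $n-(j-1)=s$, the paper derives the $l_k$'s rather than guesses them; minimality of $w_{s,n}$ then fixes each $l_k$ at the smallest admissible value. No separate length-minimization over admissible tuples is needed.

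By contrast, your plan computes $a_s$ via the signed-permutation action on the $\epsilon_i$ and then tries to minimize $\sum_k l_k$ over all admissible tuples with the right sign-flip count. This is valid in principle, but the detour through signed permutations is unnecessary and your final minimization---ruling out shorter admissible tuples achieving $p$ flips among the first $s$ coordinates---is precisely what the paper's occurrence-count argument dissolves. If you want to carry your approach through, the cleanest fix is to replace the $\epsilon$-coordinate computation by the observation $a_s=\#\{\text{occurrences of }s_s\text{ in a reduced word}\}$ and then argue, as the paper does, that the constraints of Theorem~\ref{type Bn} force the plateau--descent shape once $p$ such occurrences are demanded; this makes the minimality automatic rather than a residual combinatorial problem.
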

	\begin{proof}  Recall from \cite[p. 69]{Hum1} that  
		$\omega_{n}=\frac{1}{2}(\displaystyle\sum_{i=1}^{n}i\alpha_{i})$.\\
		Let $w_{s,n}=w_{1}(l_{1})w_{2}(l_{2})....w_{n-1}(l_{n-1})w_{n}(l_{n})$ be satisfying conditions of the Theorem \ref{type Bn}. 
		Proof of $(1):$ If $j<p$ then, $l_{n-(j-1)}$ cannot be less than  $j-1$.
		For otherwise, if $l_{n-(j-1)}\leq j-1$, then condition (3) implies $l_{n-(j+p-2)}\leq j-1<p$. In that case $w_{n-(j+p-2)}(l_{n-(j+p-2)})$ will not include the simple reflection $s_{n-(j-1)}$.
		Similar reasoning shows that $l_{n-(j-2)}=j-1$ and so on up to $l_{1}=1$.
		
		$s_{n-(j-1)}$ must appear $p$ times in any reduced expression of $w_{s,n}$. Indeed, $s_{n-(j-1)}$ appears in $w_{i}$ for all $n-(j+p-2)\leq i\leq n-(j-1)$.\\
		Proof of $(2):$
		If $l_{n-(p-1)}$ is less than $p$, then using condition $(3)$ of Theorem \ref{type Bn}, we can see that $l_{n-(j+p-2)}<p$. Hence, $w_{n-(j+p-2)}(l_{n-(j+p-2)})$ will not include the simple reflection $s_{n-(j-1)}$. This implies $l_{n-(p-1)}=p$ and similarly $l_{n-i}=i+1$ for all $0\leq i\leq p-2$. 
		By condition $(1)$ of the Theorem $l_{n-(j-1)}\leq j$. Since $p-1\leq j-1$,
		if $l_{n-(j-1)}\leq p-1$, then condition $(3)$ implies  $l_{n-(j+p-2)}<p$. This is not possible, because $w_{n-(j+p-2)}$ must conatin $s_{n-(j-1)}$. Therefore, $l_{n-(j-1)}\geq p$. \\
		Then minimality of $w_{s,n}$ implies $l_{n-(j-1)} =p$. Note that $s_{n-(j-1)}$ appears in $w_{i}$ for all $n-(j+p-2)\leq i\leq n-(j-1)$ and there are $p$ many of them. This completes the proof.
	\end{proof}
	\begin{lemma}
		$X(w_{s,n})^{ss}_{\lambda_{s}}(\mathcal{L}(2\omega_{n}))=X(w_{s,n})^{s}_{\lambda_{s}}(\mathcal{L}(2\omega_{n}))$ if and only if $s \not\equiv 0 \ (mod\ 2)$.
		\label{ss=s,type Bn}
	\end{lemma}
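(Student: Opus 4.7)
The plan is to reduce the statement to a single vanishing criterion via Lemma~\ref{ss=s,min}(ii), and then evaluate that criterion directly from the explicit reduced expression provided by Lemma~\ref{Type Bn criteion}. By Lemma~\ref{ss=s,min}(ii), the equality $X(w_{s,n})^{ss}_{\lambda_{s}}(\mathcal{L}(2\omega_{n})) = X(w_{s,n})^{s}_{\lambda_{s}}(\mathcal{L}(2\omega_{n}))$ is equivalent to $\langle w_{s,n}(\omega_{n}),\lambda_{s}\rangle \neq 0$, so the whole problem reduces to pinning down exactly when this pairing vanishes.

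Writing $w_{s,n}(\omega_{n}) = \omega_{n} - \sum_{k=1}^{n} n_{k}\,\alpha_{k}$, where $n_{k}$ is the (reduced-expression-independent) number of occurrences of $s_{k}$ in any reduced word for $w_{s,n}$, and recalling from \cite[p.~69]{Hum1} that $\omega_{n} = \tfrac{1}{2}\sum_{i=1}^{n} i\,\alpha_{i}$, one gets $\langle w_{s,n}(\omega_{n}),\lambda_{s}\rangle = \tfrac{s}{2} - n_{s}$. Thus the lemma reduces to the single combinatorial identity $n_{s} = p = \lceil s/2 \rceil$: once this is proved, the pairing equals $0$ when $s$ is even and $-\tfrac{1}{2}$ when $s$ is odd, which is exactly the claimed equivalence.

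To establish $n_{s} = p$, I would use that $s_{s}$ appears in the factor $w_{i}(l_{i}) = s_{i+l_{i}-1}\cdots s_{i}$ iff $s - l_{i} + 1 \le i \le s$, and split the sequence $(l_{i})_{i=1}^{n}$ from Lemma~\ref{Type Bn criteion} into three blocks: an initial block with $l_{i} = 0$, a middle block with $l_{i} \equiv p$, and a tail block with $l_{i} = n - i + 1$. The initial block contributes nothing. In the regime $j \ge p$, the middle block spans the index range $[s-p+1,\,s+j-p]$ whose intersection with the window $[s-p+1,\,s]$ has length $p$, while the tail starts at an index $> s$ and contributes nothing. In the regime $j < p$, the middle block spans $[s-p+1,\,s+j-p]$ with $s+j-p < s$, hence lies entirely in the window and contributes $j$, while the tail supplies the remaining indices $[s+j-p+1,\,s]$, contributing $p - j$. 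In both regimes $n_{s} = p$.

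The main technical obstacle is the boundary bookkeeping in the three-block decomposition: the endpoints of the constant-$p$ and linearly-decreasing ranges depend simultaneously on $j$ and $p$, and one must check carefully that the window $[s-l_i+1,\,s]$ is covered by the right indices in each regime so that the total $n_s = p$ emerges uniformly. Once this routine but delicate count is carried out, the equivalence with $s \not\equiv 0 \pmod 2$ follows immediately from the formula $\langle w_{s,n}(\omega_n),\lambda_s\rangle = s/2 - p$ together with Lemma~\ref{ss=s,min}(ii).
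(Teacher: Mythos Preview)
Your proposal is correct and follows essentially the same route as the paper: both reduce via Lemma~\ref{ss=s,min}(ii) to evaluating $\langle w_{s,n}(\omega_n),\lambda_s\rangle = \tfrac{s}{2} - a_s$ and then establish $a_s = p = \lceil s/2\rceil$. The only difference is presentational: the paper simply records the full coefficient pattern $(a_k)_{1\le k\le n}$ in one display (equation~(5.6)) and reads off $a_s = p$, whereas you spell out the block-by-block count of $s_s$ in the reduced expression from Lemma~\ref{Type Bn criteion}; your count is correct in both regimes $j < p$ and $j \ge p$.
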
 
	\begin{proof}	Fix $1\leq s\leq n$. Let $w_{s,n}(\omega_{n})=\omega_{n}-\displaystyle\sum_{k=1}^{n}a_{k}\alpha_{k}$, $a_{k}\in \mathbb{Z}_{\geq 0}$. Let $p=\lceil \frac{s}{2}\rceil$. Then, we have 
		\begin{equation}
			a_{k}=
			\begin{cases}
				0 & \text{if} \ 1\leq k\leq s-p\\
				k-s+p & \text{if } \ s-(p-1)\leq k\leq s-1\\
				p & \text{if} \ s\leq k\leq n
			\end{cases}
		\end{equation}
		Therefore, $\langle w_{s,r}(\omega_{r}),\lambda_{s}\rangle=\frac{s}{2}-\lceil \frac{s}{2}\rceil$. This is non-zero if and only if $s \not\equiv 0\ (mod\ 2)$. Then the proof follows from Lemma \ref{ss=s,min} $(ii)$.
	\end{proof}
	\subsection{Type $C_{n}$}
	Note that $\omega_{1}$ is the only minuscule fundamental weight in type $C_{n}$ (see \cite[p.180]{LW}).
	\begin{lemma}  For $1\leq s\leq n$,  $w_{s,1}=s_{s}s_{s-1}\cdots s_{3}s_{2}s_{1}$.
	\end{lemma}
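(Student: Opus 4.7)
The plan is to exploit the fact that in type $C_n$ the minuscule homogeneous space $G/P_{S\setminus\{\alpha_1\}}$ is projective space $\mathbb{P}^{2n-1}$, whose Schubert cell structure is completely explicit. I work in the standard realization with simple roots $\alpha_i = e_i - e_{i+1}$ for $i < n$ and $\alpha_n = 2 e_n$, so that $\omega_1 = e_1$ and the Weyl orbit $W \cdot \omega_1$ equals $\{\pm e_1,\ldots,\pm e_n\}$. Since $W^{S\setminus\{\alpha_1\}}$ is in bijection with this orbit, it consists of exactly $2n$ elements, all comparable under Bruhat order, and this reduces the lemma to a finite and transparent check.

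First I would record the explicit minimal coset representatives. For $1 \leq k \leq n$, the element $\sigma_k := s_{k-1} s_{k-2} \cdots s_1$ sends $e_1$ to $e_k$ and has length $k-1$; and $\tau_k := s_k s_{k+1} \cdots s_{n-1} s_n s_{n-1} \cdots s_1$ sends $e_1$ to $-e_k$ with length $2n - k$. Together these give all of $W^{S\setminus\{\alpha_1\}}$. Next, solving the defining equations $\langle \alpha_j, \lambda_s\rangle = \delta_{js}$ in the basis dual to $\{e_i\}$ yields $\lambda_s = e_1^{\ast} + \cdots + e_s^{\ast}$ for $s < n$, and $\lambda_n = \frac{1}{2}(e_1^{\ast} + \cdots + e_n^{\ast})$. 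Pairing these with $\sigma_k(\omega_1) = e_k$ and $\tau_k(\omega_1) = -e_k$ produces every value of $\langle w(\omega_1), \lambda_s\rangle$ in closed form.

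Applying Lemma \ref{main lemma}, $w_{s,1}$ is the minimal length element of $W^{S\setminus\{\alpha_1\}}$ with $\langle w(\omega_1), \lambda_s\rangle \leq 0$. For $s < n$ this pairing is $+1$ on $\sigma_1,\ldots,\sigma_s$, zero on $\sigma_{s+1},\ldots,\sigma_n$, and $-1$ on $\tau_1,\ldots,\tau_s$ (zero on the remaining $\tau_k$), so the shortest qualifying element is $\sigma_{s+1} = s_s s_{s-1} \cdots s_1$ of length $s$. For $s = n$ every $\sigma_k$ contributes the positive value $\frac{1}{2}$, forcing the answer into the $\tau_k$ half of the orbit; the shortest qualifying element is $\tau_n = s_n s_{n-1} \cdots s_1$, which is precisely the desired formula with $s = n$. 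The only subtlety worth flagging is this $s = n$ case: because the coroot dual to the long root $\alpha_n$ makes $\lambda_n$ pair positively with every $e_j$, no $\sigma_k$ can succeed, and one must cross into the negative half of the orbit. Once this is noticed, the formula $s_s s_{s-1} \cdots s_1$ is uniform in $s$, and the proof concludes.
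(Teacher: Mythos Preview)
Your argument is correct. You and the paper take recognisably different routes to the same conclusion.

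The paper works intrinsically with root data: it writes $\omega_{1}=\alpha_{1}+\cdots+\alpha_{n-1}+\tfrac{1}{2}\alpha_{n}$ and records the reduced expression $w_{0}^{S\setminus\{\alpha_{1}\}}=s_{1}s_{2}\cdots s_{n-1}s_{n}s_{n-1}\cdots s_{2}s_{1}$. Since in the minuscule case every element of $W^{S\setminus\{\alpha_{1}\}}$ is a right factor of this word, and since $\langle \omega_{1}-w(\omega_{1}),\lambda_{s}\rangle$ simply counts occurrences of $s_{s}$ in the chosen right factor, the minimal $w$ with $\langle w(\omega_{1}),\lambda_{s}\rangle\le 0$ is the shortest right factor in which $s_{s}$ first appears, namely $s_{s}s_{s-1}\cdots s_{1}$. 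This is exactly the mechanism of Lemma~\ref{main lemma2} and Corollary~\ref{unique min} specialised to the chain $W^{S\setminus\{\alpha_{1}\}}$.

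You instead work in the standard coordinate model, identifying $W^{S\setminus\{\alpha_{1}\}}$ with the orbit $\{\pm e_{1},\ldots,\pm e_{n}\}$, writing down the fundamental coweights $\lambda_{s}$ explicitly, and evaluating the pairing on each orbit element. The advantage of your approach is that it is entirely self-contained and makes the dichotomy between $s<n$ (where a $\sigma_{k}$ suffices) and $s=n$ (where one must pass to the negative half of the orbit) completely transparent; the paper's approach, on the other hand, plugs directly into the general framework of Section~4 and requires no separate coordinate bookkeeping. Both rely on the observation that the Bruhat order on $W^{S\setminus\{\alpha_{1}\}}$ is a total order, so that Bruhat-minimal coincides with length-minimal.
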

	\begin{proof}
		Recall from \cite[p.69]{Hum1}, $\omega_{1}=\alpha_{1}+\alpha_{2}+\cdots+ \alpha_{n-1}+\frac{1}{2}\alpha_{n}$. Note that $w_{0}^{S\setminus\{\alpha_{1}\}}= s_{1}s_{2}\cdots s_{n-1}s_{n}s_{n-1}s_{n-2}\cdots s_{2}s_{1}$ (see \cite[Theorem 6, p.708]{min coset}). Then the proof follows easily.
	\end{proof}
	\begin{lemma}
		$X(w_{s,1})^{ss}_{\lambda_{s}}(\mathcal{L}(2\omega_{1}))=X(w_{s,1})^{s}_{\lambda_{s}}(\mathcal{L}(2\omega_{1}))$ if and only if $s=n$.
	\end{lemma}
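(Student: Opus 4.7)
The plan is to apply Lemma \ref{ss=s,min} $(ii)$: since $\omega_{1}$ is minuscule in type $C_{n}$ and $2\omega_{1}$ lies in the root lattice, the claimed equality of semistable and stable loci holds precisely when $\langle w_{s,1}(\omega_{1}), \lambda_{s}\rangle \neq 0$. The preceding lemma gives the reduced expression $w_{s,1} = s_{s}s_{s-1}\cdots s_{2}s_{1}$, so the task reduces to computing the action of this simple-reflection chain on $\omega_{1}$.

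The key step is to prove, by induction on $s$, the uniform formula
\[
w_{s,1}(\omega_{1}) \;=\; \omega_{1} - \alpha_{1} - \alpha_{2} - \cdots - \alpha_{s} \qquad (1 \leq s \leq n).
\]
The base case $s=1$ is $s_{1}(\omega_{1}) = \omega_{1} - \alpha_{1}$. In the inductive step, $s_{k+1}$ fixes $\omega_{1}$ (since $\langle \omega_{1},\alpha_{k+1}^{\vee}\rangle=0$) and fixes each $\alpha_{j}$ with $j<k$ (as the Cartan integer $a_{j,k+1}=0$), while $s_{k+1}(\alpha_{k}) = \alpha_{k} + \alpha_{k+1}$; thus the right-hand side gains precisely the extra summand $-\alpha_{k+1}$. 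Combined with $\omega_{1} = \alpha_{1} + \cdots + \alpha_{n-1} + \tfrac{1}{2}\alpha_{n}$, the coefficient of $\alpha_{s}$ in $w_{s,1}(\omega_{1})$ works out to $1-1 = 0$ for $1 \leq s \leq n-1$ and to $\tfrac{1}{2}-1 = -\tfrac{1}{2}$ for $s=n$. Therefore $\langle w_{s,1}(\omega_{1}),\lambda_{s}\rangle = 0$ precisely when $s<n$, and Lemma \ref{ss=s,min} $(ii)$ gives both directions of the equivalence.

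The only subtlety is the boundary inductive step $k = n-1 \to n$, where one must use the correct Cartan integer $a_{n-1,n}=-1$ (\emph{not} the asymmetric $a_{n,n-1} = -2$) to obtain $s_{n}(\alpha_{n-1})=\alpha_{n-1}+\alpha_{n}$; once this is verified the inductive step at the double bond matches the simply-laced pattern, and the remaining computation is a routine coefficient count.
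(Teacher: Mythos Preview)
Your proof is correct and follows essentially the same approach as the paper: both compute $w_{s,1}(\omega_{1})$ from the reduced expression $w_{s,1}=s_{s}s_{s-1}\cdots s_{1}$, read off the coefficient of $\alpha_{s}$, and invoke Lemma~\ref{ss=s,min}$(ii)$. The paper simply states the outcome $w_{s,1}(\omega_{1})=\alpha_{s+1}+\cdots+\alpha_{n-1}+\tfrac{1}{2}\alpha_{n}$ (with the obvious degenerations for $s=n-1,n$), whereas you supply the inductive justification and the Cartan-integer check at the double bond; your added detail is sound and the conclusions coincide.
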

	\begin{proof}
		Note that $w_{s,1}(\omega_{1})=\alpha_{s+1}+\cdots+\alpha_{n-1}+\frac{1}{2}\alpha_{n}$ for $1\leq s\leq n-2$, $w_{n-1,1}(\omega_{1})=\frac{1}{2}\alpha_{n}$ and $w_{n,1}(\omega_{1})=-\frac{1}{2}\alpha_{n}$. Therefore, 
		$\langle w_{s,1}(\omega_{1}),\lambda_{s}\rangle=0$ for $1\leq s\leq n-1$ and $\langle w_{s,1}(\omega_{1}),\lambda_{s}\rangle=-\frac{1}{2}$. Then the proof follows from Lemma \ref{ss=s,min} $(ii)$.
	\end{proof}
	\subsection{Type $D_{n}$}
	Note that $\omega_{1}, \omega_{n-1}, \omega_{n}$ are the only minuscule fundamental weights (see \cite[p.180]{LW}).
	\subsubsection{Description of $w_{s,1}$}
	\begin{lemma} \begin{enumerate}
			\item  $s\leq n-2$, $w_{s,1}=s_{s}s_{s-1}\cdots s_{3}s_{2}s_{1}$
			\item $s=n-1$, $w_{s,1}=s_{n-1}s_{n-2}s_{n-3}\cdots s_{2}s_{1}$
			\item $s=n$, $w_{s,1}=s_{n}s_{n-2}s_{n-3}\cdots s_{2}s_{1}$
		\end{enumerate}
	\end{lemma}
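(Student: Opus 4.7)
The strategy mirrors the proofs in types $B_n$ and $C_n$: for each of the three cases I will exhibit a candidate element and identify it with $w_{s,1}$ via the machinery of Lemma \ref{main lemma2} and Corollary \ref{unique min}. Begin by recalling from \cite[p.69]{Hum1} that the minuscule weight of $D_n$ has the expansion $\omega_1 = \alpha_1 + \alpha_2 + \cdots + \alpha_{n-2} + \frac{1}{2}(\alpha_{n-1} + \alpha_n)$, so $m = 2$ is the smallest positive integer with $m\omega_1 \in X(T)$. Since $w_0^{S\setminus\{\alpha_1\}}(\omega_1) = -\omega_1$, the coefficients appearing in Lemma \ref{main lemma2} are $a_j = 2$ for $j \leq n-2$ and $a_{n-1} = a_n = 1$. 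Unraveling the proof of Corollary \ref{unique min}, the parameter $q$ equals $1$ for every $s$, so $w_{s,1} = \tau_{s,1}$ is the unique minimal element of $W^{S\setminus\{\alpha_1\}}$ with $\langle \omega_1 - w(\omega_1), \lambda_s\rangle = 1$.

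For the explicit computation, use $s_1(\omega_1) = \omega_1 - \alpha_1$, $s_i(\omega_1) = \omega_1$ for $i \geq 2$, and $s_{i+1}(\alpha_i) = \alpha_i + \alpha_{i+1}$ whenever $\alpha_i, \alpha_{i+1}$ are adjacent in the $D_n$ Dynkin diagram. In cases $(1)$ and $(2)$, a telescoping induction yields $s_k s_{k-1} \cdots s_1(\omega_1) = \omega_1 - (\alpha_1 + \cdots + \alpha_k)$ for $k \leq n-1$; taking $k = s$ in case $(1)$ and $k = n-1$ in case $(2)$ gives $\langle \omega_1 - w(\omega_1), \lambda_s\rangle = 1$. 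For case $(3)$, the key point is that $\alpha_{n-1}$ and $\alpha_n$ are non-adjacent in $D_n$ but both are adjacent to $\alpha_{n-2}$; hence $s_n$ fixes $\omega_1$, $\alpha_{n-1}$, and $\alpha_1, \ldots, \alpha_{n-3}$, while $s_n(\alpha_{n-2}) = \alpha_{n-2} + \alpha_n$. One then computes $s_n s_{n-2} s_{n-3} \cdots s_1(\omega_1) = \omega_1 - (\alpha_1 + \cdots + \alpha_{n-3} + \alpha_{n-2} + \alpha_n)$, which again pairs to $1$ with $\lambda_n$.

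Finally, I verify that each candidate belongs to $W^{S\setminus\{\alpha_1\}}$ by inspecting its action on the simple roots, and check minimality. The Weyl orbit of $\omega_1$ is $\{\pm e_i : 1 \leq i \leq n\}$ in the standard orthonormal presentation of the $D_n$ root system, with $\omega_1 = e_1$; the condition $\langle \omega_1 - w(\omega_1), \lambda_s\rangle = 1$ then forces $w(\omega_1) = e_{s+1}$ in case $(1)$, $w(\omega_1) = e_n$ in case $(2)$, and $w(\omega_1) = -e_n$ in case $(3)$. In each instance the minimum-length element of $W^{S\setminus\{\alpha_1\}}$ realizing the prescribed image coincides with the proposed $w$ (of length $s$, $n-1$, $n-1$ respectively), and by Lemma \ref{main lemma2} this is $\tau_{s,1} = w_{s,1}$. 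The principal difficulty is the bookkeeping in case $(3)$, where one must carefully track the branching of the $D_n$ Dynkin diagram and use that $s_n$ commutes with $s_{n-1}$ but not with $s_{n-2}$; cases $(2)$ and $(3)$ are interchanged by the diagram automorphism swapping $\alpha_{n-1} \leftrightarrow \alpha_n$, which provides a useful sanity check.
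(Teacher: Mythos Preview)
Your argument is correct, and your route is a mild variant of the paper's. The paper simply records $\omega_{1}=\alpha_{1}+\cdots+\alpha_{n-2}+\tfrac{1}{2}(\alpha_{n-1}+\alpha_{n})$, quotes the reduced expression $w_{0}^{S\setminus\{\alpha_{1}\}}=s_{1}s_{2}\cdots s_{n-2}s_{n-1}s_{n}s_{n-2}\cdots s_{2}s_{1}$ from \cite[Theorem 4]{min coset}, notes that $s_{n-1}$ and $s_{n}$ commute, and then reads off $\tau_{s,1}$ directly as the appropriate right suffix of (a commuting rearrangement of) this word, exactly as in the construction of Lemma~\ref{main lemma2}. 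You instead work with the orbit $W\omega_{1}=\{\pm e_{i}\}$ in the standard coordinates, compute the image of each candidate on $\omega_{1}$, and appeal to the height--length correspondence for minuscule weights to certify minimality. Your approach is more self-contained (it does not need the external description of $w_{0}^{S\setminus\{\alpha_{1}\}}$), while the paper's matches the template used in the $B_{n}$ and $C_{n}$ cases.

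One phrasing issue: the sentence ``the condition $\langle \omega_{1}-w(\omega_{1}),\lambda_{s}\rangle=1$ then forces $w(\omega_{1})=e_{s+1}$'' is not literally true, since for $s\leq n-2$ every $e_{i}$ with $i\geq s+1$ (and several of the $-e_{i}$) also satisfy it. What you actually need, and what you use in the next sentence, is that among the orbit elements with $\alpha_{s}$-coefficient equal to $1$, the one of minimal height is the one listed (height $s$, $n-1$, $n-1$ in the three cases), and hence the corresponding minimal coset representative is $\tau_{s,1}$. Tightening that sentence would make the logic transparent.
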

	\begin{proof}
		$\omega_{1}=\alpha_{1}+\cdots+\alpha_{n-2}+\frac{1}{2}(\alpha_{n-1}+\alpha_{n-2})$ (see \cite[p.69]{Hum1}). From \cite[Theorem 4]{min coset},  we have $w_{0}^{S\setminus\{\alpha_{1}\}}= s_{1}s_{2}\cdots s_{n-2}s_{n-1}s_{n}s_{n-2}s_{n-3}\cdots s_{3}s_{2}s_{1}$. Further, note that $s_{n-1}$ and $s_{n}$ commute. Then the proof follows easily. 
	\end{proof}
	\begin{lemma}
		$X(w_{s,1})^{ss}_{\lambda_{s}}(\mathcal{L}(2\omega_{1}))=X(w_{s,1})^{s}_{\lambda_{s}}(\mathcal{L}(2\omega_{1}))$ if and only if $s=n-1,n.$
	\end{lemma}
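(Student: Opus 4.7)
The plan is to invoke Lemma \ref{ss=s,min}(ii), which reduces the statement to showing that $\langle w_{s,1}(\omega_{1}),\lambda_{s}\rangle\neq 0$ if and only if $s\in\{n-1,n\}$. Note that since $\omega_{1}=\alpha_{1}+\cdots+\alpha_{n-2}+\tfrac{1}{2}(\alpha_{n-1}+\alpha_{n})$, the smallest $m$ making $m\omega_{1}$ integral in the root lattice is $m=2$, so Lemma \ref{ss=s,min}(ii) applies to $\mathcal{L}(2\omega_{1})$ in this type $D_{n}$ setting.

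The main step will be an explicit computation of $w_{s,1}(\omega_{1})$ using the reduced expressions from the previous lemma. For $1\leq s\leq n-2$, where $w_{s,1}=s_{s}s_{s-1}\cdots s_{1}$, I will apply the reflections from the rightmost one inward: starting from $\omega_{1}$, since $\langle\omega_{1},\alpha_{1}\rangle=1$ we get $s_{1}(\omega_{1})=\omega_{1}-\alpha_{1}$; then inductively $\langle\omega_{1}-\alpha_{1}-\cdots-\alpha_{j-1},\alpha_{j}\rangle=1$ from the $A$-type sub-diagram structure, producing $w_{s,1}(\omega_{1})=\omega_{1}-\alpha_{1}-\cdots-\alpha_{s}=\alpha_{s+1}+\cdots+\alpha_{n-2}+\tfrac{1}{2}(\alpha_{n-1}+\alpha_{n})$. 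Pairing against $\lambda_{s}$ then yields $0$.

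For the two remaining cases I do the analogous computation. When $s=n-1$, the string $s_{n-2}\cdots s_{1}$ first brings $\omega_{1}$ to $\tfrac{1}{2}(\alpha_{n-1}+\alpha_{n})$; a single application of $s_{n-1}$ (using that $\alpha_{n-1}$ and $\alpha_{n}$ are orthogonal in type $D_{n}$) produces $\tfrac{1}{2}(-\alpha_{n-1}+\alpha_{n})$, so the pairing with $\lambda_{n-1}$ is $-\tfrac{1}{2}\neq 0$. When $s=n$, the same preliminary string lands at $\tfrac{1}{2}(\alpha_{n-1}+\alpha_{n})$ (the extra reflection $s_{n-2}$ in the description is unneeded since the rightmost block already ends at $s_{n-2}$, but either way the $s_{n-1}$ step is replaced by $s_{n}$), and $s_{n}$ gives $\tfrac{1}{2}(\alpha_{n-1}-\alpha_{n})$, whose pairing with $\lambda_{n}$ is $-\tfrac{1}{2}\neq 0$.

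The computation is routine once the reduced expressions are in hand, and there is no real obstacle — the orthogonality of $\alpha_{n-1}$ and $\alpha_{n}$ in the $D_{n}$ Dynkin diagram is exactly what splits the $s\leq n-2$ case (where $\lambda_{s}$ only sees the cancelled terms) from the two spin-node cases (where the final reflection destroys symmetry between $\alpha_{n-1}$ and $\alpha_{n}$, leaving a nonzero coefficient of the relevant simple root). Concluding via Lemma \ref{ss=s,min}(ii) then gives the claimed equivalence.
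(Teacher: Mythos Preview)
Your proof is correct and follows exactly the same strategy as the paper: compute $w_{s,1}(\omega_{1})$ explicitly from the reduced expressions in the preceding lemma, read off the coefficient of $\alpha_{s}$, and invoke Lemma~\ref{ss=s,min}(ii). Your value $w_{n,1}(\omega_{1})=\tfrac{1}{2}(\alpha_{n-1}-\alpha_{n})$ actually corrects a typo in the paper (which writes $-\tfrac{1}{2}(\alpha_{n-1}+\alpha_{n})$); either way the pairing with $\lambda_{n}$ is $-\tfrac{1}{2}$, so the conclusion is unaffected. The parenthetical remark about ``the extra reflection $s_{n-2}$'' is a bit garbled and could be dropped, but the computation itself is fine.
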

	\begin{proof}
		Note that \[w_{s,1}(\omega_{1})=\alpha_{s+1}+\cdots+\alpha_{n-2}+\frac{1}{2}(\alpha_{n-1}+\alpha_{n}) \ for\  1\leq s\leq n-3\]
		\[w_{n-2,1}(\omega_{1})=\frac{1}{2}(\alpha_{n-1}+\alpha_{n})\]	\[w_{n-1,1}(\omega_{1})=-\frac{1}{2}\alpha_{n-1}+\frac{1}{2}\alpha_{n}\]
		\[w_{n,1}(\omega_{1})=-\frac{1}{2}(\alpha_{n-1}+\alpha_{n})\]
		Therefore,	$\langle w_{s,1}(\omega_{1}),\lambda_{s}\rangle=0$ for $1\leq s\leq n-2$ and $\langle w_{s,1}(\omega_{1}),\lambda_{s}\rangle=-\frac{1}{2}$ for $s=n-1,n$. Now the proof follows from Lemma \ref{ss=s,min}$(ii)$.
	\end{proof}
	\subsubsection{Description of $w_{s,n}$} Let 	$w_{n}=s_{1}\cdots s_{n-3}s_{n-2}s_{n} \ and \ 
	w_{n-1}=s_{1}\cdots s_{n-3}s_{n-2}s_{n-1}$.
	Following \cite{min coset} (see \cite[Theorem 4, p.703]{min coset}), we define
	\begin{align*}
		w_{n}(l)=s_{l-1}\cdots s_{n-3}s_{n-2}s_{n}\\
		w_{n-1}(l)=s_{l-1}\cdots s_{n-3}s_{n-2}s_{n-1}
	\end{align*}
	for $l=2,3,\ldots,n-1 $
	and $w_{n}(n)=s_{n}$, $w_{n-1}(n)=s_{n-1}$. Note that we are folllowing the convention of \cite{Hum1} for numbering of Dynkin diagram, so the following result is stated from \cite{min coset} with appropiate modification (see \cite[comment after Theorem 4, p.708]{min coset}).
	\begin{theorem}
		$W^{S\setminus\{\alpha_{n}\}}=\{ w_{n-\frac{1+(-1)^{h+1}}{2}}(l_{n-h})....w_{n-1}(l_{n-3})w_{n}(l_{n-2})w_{n-1}(l_{n-1})w_{n}(l_{n}):
		\  l_{n} < l_{n-1} < l_{n-2}<...<l_{n-h}\ and \ 
		\  0\leq h \leq n-2
		\}$
		\label{thm:desc}
		\end{theorem}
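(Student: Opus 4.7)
My plan is to deduce the theorem from \cite[Theorem~4, p.~703]{min coset}, which gives the analogous parameterization for the minimal \emph{right}-coset representatives of $W_{S\setminus\{\alpha_n\}}\backslash W$; the proof reduces to a careful translation of conventions. The first step is to invert everything: the map $w\mapsto w^{-1}$ is a length-preserving bijection between $W^{S\setminus\{\alpha_n\}}$ (minimal left-coset representatives) and the set of minimal right-coset representatives, and it sends a reduced expression $s_{i_1}\cdots s_{i_k}$ to the reduced expression $s_{i_k}\cdots s_{i_1}$. Applying this involution to the blocks used in \cite{min coset}, which are decreasing products of the form $s_n s_{n-2}\cdots s_{l-1}$ and $s_{n-1} s_{n-2}\cdots s_{l-1}$, produces precisely the blocks $w_n(l)=s_{l-1}\cdots s_{n-2}s_n$ and $w_{n-1}(l)=s_{l-1}\cdots s_{n-2}s_{n-1}$ defined above. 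Reversing the order of the blocks then reverses the strict inequalities on the parameters $l_i$, yielding the condition $l_n<l_{n-1}<\cdots<l_{n-h}$ in the statement.

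The second step is to reconcile the Dynkin diagram labelling between \cite{Hum1} and \cite{min coset}. Since $\alpha_{n-1}$ and $\alpha_n$ are the two ``tails'' attached to $\alpha_{n-2}$ in type $D_n$, the two conventions differ at most by the diagram automorphism swapping these tails; this symmetry is what ultimately forces the alternation of $w_n$ and $w_{n-1}$ in successive blocks. Combinatorially, any two adjacent blocks in a reduced expression must terminate at different tail reflections, for otherwise a commutation relation (since $s_{n-1}$ and $s_n$ commute) would merge them and contradict minimality of the coset representative. Encoded by the exponent $w_{n-\tfrac{1+(-1)^{h+1}}{2}}(l_{n-h})$ for the leftmost factor, this yields the alternation $w_n,\,w_{n-1},\,w_n,\ldots$ (read right to left) prescribed by the theorem.

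As a consistency check, the parameterization together with the identity element (the ``empty product'' case with $h<0$) should account for all $|W/W_{S\setminus\{\alpha_n\}}|=2^{n-1}$ minuscule cosets. Indeed, strictly increasing sequences $l_n<l_{n-1}<\cdots<l_{n-h}$ of length $h+1$ drawn from $\{2,3,\ldots,n\}$ number $\binom{n-1}{h+1}$, and $\sum_{h=0}^{n-2}\binom{n-1}{h+1}+1=2^{n-1}$, confirming the count. The main obstacle is purely notational: tracking the alternation and the strict inequalities through the inversion and verifying reducedness of the stated product by length additivity against $|R^+(w^{-1})|$. The underlying rigidity comes for free from the minusculeness of $\omega_n$, which guarantees that any element of $W^{S\setminus\{\alpha_n\}}$ has a unique heap-theoretic decomposition into blocks of the two tail types, so the only real content is to check that this decomposition takes the stated form after conventions are aligned.
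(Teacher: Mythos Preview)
The paper does not give its own proof of this theorem: it is stated as a direct citation of \cite[Theorem~4, p.~703]{min coset}, accompanied only by the remark that the numbering of the Dynkin diagram follows \cite{Hum1} and that the result is restated ``with appropriate modification (see \cite[comment after Theorem~4, p.~708]{min coset}).'' Your proposal is therefore not competing against an argument in the paper but rather supplying the details that the paper leaves implicit, namely the passage from right to left cosets via $w\mapsto w^{-1}$ and the reconciliation of the two tail labels. That is exactly the right thing to do, and your cardinality check against $|W/W_{S\setminus\{\alpha_n\}}|=2^{n-1}$ is a sensible sanity check; so your approach is correct and aligned with what the paper intends.
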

	\begin{lemma}
		\begin{enumerate}
			\item For $s=n$, define $p=\lceil\frac{n}{4}\rceil$. Then\\
			$w_{s,n}=w_{n}(n)w_{n-1}(n-1)w_{n}(n-2)w_{n-1}(n-3)...w_{n-1}(n-2p+3)w_{n}(n-2p+2)$
			\item For $s=n-1$, define $p=\lceil\frac{n-2}{4}\rceil$. Then\\
			$w_{s,n}=w_{n-1}(n)w_{n}(n-1)w_{n-1}(n-2)...w_{n-1}(n-2p+2)w_{n}(n-2p+1)$
			\item For $s=n-i$, ($n-1\geq i\geq2)$, define $p=\lceil\frac{n-i}{2}\rceil$. Then\begin{itemize}
				\item if p is even,\\
				$w_{s,n}=w_{n-1}(n-i+1)w_{n}(n-i)w_{n-1}(n-i-1)...w_{n-1}(n-i-p+3)w_{n}(n-i-p+2)$
				\item if p is odd,\\ $w_{s,n}=w_{n}(n-i+1)w_{n-1}(n-i)w_{n}(n-i-1)...w_{n-1}(n-i-p+3)w_{n}(n-i-p+2)$
			\end{itemize}
		\end{enumerate}	
		\label{type Dn min element}
	\end{lemma}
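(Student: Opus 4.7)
The plan is to combine Corollary \ref{unique min} with the explicit description of $W^{S\setminus\{\alpha_n\}}$ from Theorem \ref{thm:desc}. By Corollary \ref{unique min}, $w_{s,n}$ is the unique minimal element of $W^{S\setminus\{\alpha_n\}}$ satisfying $\langle w(\omega_n),\lambda_s\rangle \leq 0$, and Lemma \ref{ss=s,min}(i) refines this to $\langle w_{s,n}(\omega_n),\lambda_s\rangle \in (-1, 0]$. Since every element of $W^{S\setminus\{\alpha_n\}}$ admits a unique expression as a product of alternating blocks $w_n(l)$ and $w_{n-1}(l)$ with strictly decreasing parameters, I would search for the shortest such product achieving the required bracket value. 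This parallels the strategy used for the corresponding descriptions in types $A_{n-1}$, $B_n$, $C_n$ (Lemmas \ref{criterion An}, \ref{Type Bn criteion}) earlier in this section.

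Using the standard expansion $\omega_n = \tfrac{1}{2}(\alpha_1 + 2\alpha_2 + \cdots + (n-2)\alpha_{n-2}) + \tfrac{n-2}{4}\alpha_{n-1} + \tfrac{n}{4}\alpha_n$ from \cite[p.69]{Hum1}, we have $\langle\omega_n,\lambda_s\rangle$ equal to $s/2$ for $s\le n-2$, $(n-2)/4$ for $s=n-1$, and $n/4$ for $s=n$. Writing $w(\omega_n) = \omega_n - \sum a_j\alpha_j$ with $a_j \in \mathbb{Z}_{\geq 0}$, the condition $\langle w(\omega_n),\lambda_s\rangle \in (-1, 0]$ forces $a_s = \lceil\langle\omega_n,\lambda_s\rangle\rceil$, which is precisely the integer $p$ given in each part of the lemma. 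The key computation is then block-by-block: applying a single block $w_{\varepsilon}(l)$ (after all blocks to its right have acted on $\omega_n$) contributes $+1$ to $a_s$ exactly when that block contains $s_s$, i.e., when $l \leq s+1$. Taking $p$ consecutive blocks whose indices cluster as close to $n$ as permitted by the strict-decrease condition in Theorem \ref{thm:desc} gives the shortest candidate achieving $a_s = p$; a length comparison against all other configurations in Theorem \ref{thm:desc} rules out shorter solutions.

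The main obstacle is pinning down the $w_n(\cdot)$ versus $w_{n-1}(\cdot)$ alternation. In cases (1) and (2), where $s \in \{n, n-1\}$, the contributions of $s_n$ and $s_{n-1}$ to the coefficients $a_n$ and $a_{n-1}$ are coupled, and the factor of $4$ appearing in $\lceil n/4\rceil$ and $\lceil (n-2)/4\rceil$ arises because each consecutive pair of blocks (one of each type) contributes $2$ to exactly one of $a_n$ or $a_{n-1}$. In case (3), to make the $\alpha_n$-coefficient land in the correct range the rightmost block must be $w_n(n-i-p+2)$, and this in turn forces the alternation of the remaining blocks to start with $w_{n-1}$ when $p$ is even and with $w_n$ when $p$ is odd — this is where the parity split of case (3) enters. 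Verifying these parity-sensitive case divisions, together with the minimality check under the parameterization of Theorem \ref{thm:desc}, is the technical heart of the argument; once this is done, uniqueness from Corollary \ref{unique min} identifies the constructed element with $w_{s,n}$.
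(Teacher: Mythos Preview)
Your overall strategy matches the paper's: use the block parameterization of $W^{S\setminus\{\alpha_n\}}$ from Theorem~\ref{thm:desc}, compute that $s_s$ must occur exactly $p$ times in a reduced word for $w_{s,n}$ (equivalently $a_s=p$), and then minimize length subject to this. The paper in addition invokes Lemma~\ref{reduced expression} to conclude that the leftmost block begins with $s_s$, which immediately gives the largest parameter $l_{n-h}$; you effectively reach the same conclusion via the length minimization.

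However, your explanation of the parity split in case~(3) misidentifies the mechanism. In Theorem~\ref{thm:desc} the alternation pattern is \emph{not} a free choice: the rightmost block is always $w_n(l_n)$, the next is always $w_{n-1}(l_{n-1})$, and so on, with the leftmost type fixed by $n-\tfrac{1+(-1)^{h+1}}{2}$. So once you know there are exactly $p$ blocks (i.e.\ $h+1=p$), the leftmost block is $w_n(\cdot)$ precisely when $p$ is odd and $w_{n-1}(\cdot)$ when $p$ is even --- this is purely combinatorial and has nothing to do with any $\alpha_n$-coefficient constraint, which for $s\le n-2$ is irrelevant. Likewise, your account of cases~(1) and~(2) is garbled: a block $w_n(l)$ contains $s_n$ but not $s_{n-1}$, and $w_{n-1}(l)$ contains $s_{n-1}$ but not $s_n$, so a consecutive pair contributes $+1$ to \emph{each} of $a_n$ and $a_{n-1}$, not $+2$ to one of them. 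The paper's argument for $s=n$ is simply that only the $w_n$-type blocks contribute to $a_n$, and since these occupy alternate positions starting from the right, $p$ occurrences of $s_n$ force $2p-1$ blocks in total; the case $s=n-1$ is symmetric. If you correct these two points, your argument goes through and is essentially the paper's.
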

	\begin{proof}
		Recall from \cite[p. 69]{Hum1}, $\omega_{n}=\frac{1}{2}(\alpha_{1}+2\alpha_{2}+\cdots+(n-2)\alpha_{n-2}+\frac{1}{2}(n-2)\alpha_{n-1}+\frac{1}{2}n\alpha_{n})$.\\
		Proof of $(1):$ Let $w_{n,n}=w_{n-\frac{1+(-1)^{h+1}}{2}}(l_{n-h})....w_{n-1}(l_{n-3})w_{n}(l_{n-2})w_{n-1}(l_{n-1})w_{n}(l_{n})$ be satisfying the conditions  of the Theorem \ref{thm:desc}. From Lemma \ref{reduced expression} any reduced expression of $w_{n,n}$ begin with $s_{n}$. Hence $l_{n-h}=n$, and $h+1$ is odd. Note that $s_{n}$ does not appear in the $w_{n-1}$. Since $s_{n}$ must appear $p$ number of times, $h+1=2(p-1)+1$, i.e., $h=2(p-1)$. Now from first condition of Theorem \ref{thm:desc}, we see that to have the smalllest length element $l_{j-1}-l_{j}=1$, for $n-h+1\leq j\leq n$. Thus all $l_{j}$'s  are  determined recursively.\\
		Proof of $(2):$ Note that $s_{n-1}$ does not appear in $w_{n}$, then the argument is similar to $(1)$.\\
		Proof of $(3):$ Since $s\leq n-2$, for $l<n-i+1$, both $w_{n}(l)$ and $w_{n-1}(l)$  contains $s_{n-i}$.
		If $w_{s,n}=w_{n-\frac{1+(-1)^{h+1}}{2}}(l_{n-h})....w_{n-1}(l_{n-3})w_{n}(l_{n-2})w_{n-1}(l_{n-1})w_{n}(l_{n})$  is a reduced expression, then $h+1=p$ and $l_{n-h}=n-i+1$. The integer $\frac{1+(-1)^{h+1}}{2}$ will be determined by parity of $p$.	
	\end{proof}
	\begin{lemma}
		\begin{enumerate}
			\item For $s=n$, $X(w_{s,n})^{ss}_{\lambda_{s}}(\mathcal{L}(4\omega_{n}))=X(w_{s,n})^{s}_{\lambda_{s}}(\mathcal{L}(4\omega_{n}))$ if and only if $n\not\equiv0 \ (mod \ 4)$.
			\item For $s=n-1$, $X(w_{s,n})^{ss}_{\lambda_{s}}(\mathcal{L}(4\omega_{n}))=X(w_{s,n})^{s}_{\lambda_{s}}(\mathcal{L}(4\omega_{n}))$ if and only if $n\not\equiv 2 \ (mod\ 4)$.
			\item For $1\leq s\leq n-2$, $X(w_{s,n})^{ss}_{\lambda_{s}}(\mathcal{L}(4\omega_{n}))=X(w_{s,n})^{s}_{\lambda_{s}}(\mathcal{L}(4\omega_{n}))$ if and only if $s\not\equiv 0 \ (mod\ 2)$
		\end{enumerate}
		\label{ss=s,type Dn}
	\end{lemma}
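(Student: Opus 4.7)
The plan is to apply Lemma \ref{ss=s,min}$(ii)$: the equality $X(w_{s,n})^{ss}_{\lambda_{s}}(\mathcal{L}(4\omega_{n}))=X(w_{s,n})^{s}_{\lambda_{s}}(\mathcal{L}(4\omega_{n}))$ holds if and only if $\langle w_{s,n}(\omega_{n}),\lambda_{s}\rangle\neq 0$. So the whole lemma reduces to computing this one pairing in the three cases and determining when it vanishes, exactly as was done in Lemma \ref{ss=s,type Bn} and in the $\omega_{1}$-lemmas above.

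First I would record the expression of $\omega_{n}$ in the simple-root basis. Following the conventions of \cite[p.\,69]{Hum1} one checks
\[
\omega_{n}=\tfrac{1}{2}\sum_{j=1}^{n-2}j\,\alpha_{j}+\tfrac{n-2}{4}\alpha_{n-1}+\tfrac{n}{4}\alpha_{n},
\]
so $\langle\omega_{n},\lambda_{s}\rangle$ equals $s/2$ for $s\le n-2$, equals $(n-2)/4$ for $s=n-1$, and equals $n/4$ for $s=n$. Next, since $\omega_{n}$ is minuscule, for any $w\in W^{S\setminus\{\alpha_{n}\}}$ with reduced expression $s_{i_{1}}\cdots s_{i_{k}}$ one has $w(\omega_{n})=\omega_{n}-\sum_{j}n_{j}\alpha_{j}$, where $n_{j}$ is the number of occurrences of $s_{j}$ (this is the same counting principle used implicitly in Lemma \ref{main lemma2}). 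Hence $\langle w_{s,n}(\omega_{n}),\lambda_{s}\rangle=\langle\omega_{n},\lambda_{s}\rangle-n_{s}$, and the problem becomes a count of occurrences of $s_{s}$ in the reduced expressions furnished by Lemma \ref{type Dn min element}.

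For case (1), $s=n$ and $p=\lceil n/4\rceil$. The expression for $w_{n,n}$ is an alternating product $w_{n}(n)w_{n-1}(n-1)w_{n}(n-2)\cdots w_{n}(n-2p+2)$ of $2p-1$ factors; since $s_{n}$ appears exactly once in each $w_{n}(l)$ and not at all in $w_{n-1}(l)$, I count $n_{n}=p$, giving $\langle w_{n,n}(\omega_{n}),\lambda_{n}\rangle=n/4-\lceil n/4\rceil$, which vanishes iff $n\equiv 0\pmod 4$. For case (2), $s=n-1$ and $p=\lceil(n-2)/4\rceil$; the reduced expression has $2p$ alternating factors with $p$ copies of $w_{n-1}(l)$, each contributing a single $s_{n-1}$, so $n_{n-1}=p$ and $\langle w_{n-1,n}(\omega_{n}),\lambda_{n-1}\rangle=(n-2)/4-\lceil(n-2)/4\rceil$, vanishing iff $n\equiv 2\pmod 4$. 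For case (3), $s=n-i$ with $i\ge 2$ and $p=\lceil(n-i)/2\rceil$: since the indices of the $p$ factors all satisfy $l\le n-i+1$, the simple reflection $s_{n-i}$ occurs exactly once in every one of the $w_{n}(l)$ and $w_{n-1}(l)$ factors (regardless of which alternation pattern we are in), so $n_{s}=p$ and $\langle w_{s,n}(\omega_{n}),\lambda_{s}\rangle=s/2-\lceil s/2\rceil$, which vanishes iff $s$ is even.

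The only delicate point is the counting of $s_{s}$ occurrences in case (3), where we must verify that for every factor $w_{n}(l)$ or $w_{n-1}(l)$ appearing in the product, the index $l$ lies in the range that guarantees $s_{n-i}$ appears. This is a straightforward check against the ranges $l\ge n-i-p+2$ and $l\le n-i+1$ given by Lemma \ref{type Dn min element}, together with the definitions $w_{n}(l)=s_{l-1}\cdots s_{n-2}s_{n}$ and $w_{n-1}(l)=s_{l-1}\cdots s_{n-2}s_{n-1}$. With the three pairings computed, the three equivalences of the lemma follow immediately from Lemma \ref{ss=s,min}$(ii)$.
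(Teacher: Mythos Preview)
Your proof is correct and follows the same route as the paper: reduce to Lemma~\ref{ss=s,min}(ii), write $\omega_{n}$ in the simple-root basis, and compute $\langle w_{s,n}(\omega_{n}),\lambda_{s}\rangle=\langle\omega_{n},\lambda_{s}\rangle-n_{s}$ by counting occurrences of $s_{s}$ in the reduced expressions from Lemma~\ref{type Dn min element}. The only cosmetic difference is that the paper records the full simple-root expansion of $w_{s,n}(\omega_{n})$ in each case (all coefficients, not just the one at $\alpha_{s}$), whereas you compute only the coefficient that matters; either way one arrives at the same values $n/4-\lceil n/4\rceil$, $(n-2)/4-\lceil(n-2)/4\rceil$, and $s/2-\lceil s/2\rceil$.
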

	\begin{proof}
		proof of $(1):$ For $s=n$, $p=\lceil\frac{n}{4}\rceil$ we have \\ $w_{s,n}(\omega_{n})=\omega_{n}-\displaystyle\sum_{k=n-2p+1}^{n-2}(k-n+2p)\alpha_{k}-(p-1)\alpha_{n-1}-p\alpha_{n}$.  Then  $\langle w_{s,n}(\omega_{n}),\lambda_{s}\rangle=\frac{n}{4}-\lceil\frac{n}{4}\rceil$ is non zero if and only if  $n\not\equiv0 \ (mod \ 4)$. Now the proof follows from Lemma \ref{ss=s,min} $(ii)$.\\
		Proof of $(2) :$ For $s=n-1$, $p=\lceil\frac{n-2}{4}\rceil$ we have \\
		$w_{s,n}(\omega_{n})=\omega_{n}-\displaystyle\sum_{k=n-2p}^{n-2}(k-n+2p+1)\alpha_{k}-p\alpha_{n-1}-p\alpha_{n}$. Then  $\langle w_{s,n}(\omega_{n}),\lambda_{s}\rangle=\frac{n-2}{4}-\lceil\frac{n-2}{4}\rceil$ is non zero if and only if  $n\not\equiv2 \ (mod \ 4)$. Now the proof follows from Lemma \ref{ss=s,min} $(ii)$.\\
		Proof of $(3) :$ Let $s=n-i$ ($2\leq i\leq n-1$) and $p=\lceil\frac{n-i}{2}\rceil$. Then 
		\begin{itemize}
			\item if $p$ is odd, we have \\ $w_{s,n}(\omega_{n})=\omega_{n}-\displaystyle\sum_{k=n-i-p+1}^{n-i}(k-n+i+p)\alpha_{k}-\displaystyle\sum_{k=n-i+1}^{n-2}p\alpha_{k}-(\frac{p-1}{2})\alpha_{n-1}-(\frac{p+1}{2})\alpha_{n}$
			\item if $p$ is even, we have \\ $w_{s,n}(\omega_{n})=\omega_{n}-\displaystyle\sum_{k=n-i-p+1}^{n-i}(k-n+i+p)\alpha_{k}-\displaystyle\sum_{k=n-i+1}^{n-2}p\alpha_{k}-(\frac{p}{2})\alpha_{n-1}-(\frac{p}{2})\alpha_{n}$.\end{itemize}
		Then  $\langle w_{s,n}(\omega_{n}),\lambda_{s}\rangle=\frac{n-i}{2}-\lceil\frac{n-i}{2}\rceil$ is non zero if and only if  $n-i\not\equiv 0 \ (mod \ 2)$. Now the conclusion follows from Lemma \ref{ss=s,min}
		$(ii)$.
	\end{proof}
	\subsubsection{Description of $w_{s,n-1}$}	
	\begin{lemma}
		\begin{enumerate}
			\item For $s=n-1$, define $p=\lceil\frac{n}{4}\rceil$. Then\\			$w_{s,n-1}=w_{n-1}(n)w_{n}(n-1)w_{n-1}(n-2)w_{n}(n-3)...w_{n}(n-2p+3)w_{n-1}(n-2p+2)$
			\item For $s=n$, define $p=\lceil\frac{n-2}{4}\rceil$. Then\\
			$w_{s,n-1}=w_{n}(n)w_{n-1}(n-1)w_{n}(n-2)...w_{n}(n-2p+2)w_{n-1}(n-2p+1)$
			
			\item For s=n$-$i, ($n-1\geq i\geq2)$, define $p=\lceil\frac{n-i}{2}\rceil$. Then\begin{itemize}
				\item if p is even,\\
				$w_{s,n-1}=w_{n}(n-i+1)w_{n-1}(n-i)w_{n}(n-i-1)...w_{n}(n-i-p+3)w_{n-1}(n-i-p+2)$
				\item if p is odd,\\ $w_{s,n-1}=w_{n-1}(n-i+1)w_{n}(n-i)w_{n-1}(n-i-1)...w_{n}(n-i-p+3)w_{n-1}(n-i-p+2)$
			\end{itemize}
	\end{enumerate}	\end{lemma}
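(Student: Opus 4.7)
The plan is to parallel the proof of Lemma \ref{type Dn min element}, using the $D_n$ diagram automorphism $\sigma$ interchanging $\alpha_{n-1}$ and $\alpha_n$ to derive the appropriate parameterization of $W^{S \setminus \{\alpha_{n-1}\}}$. Under $\sigma$ one has $\sigma(\omega_n) = \omega_{n-1}$, and conjugation by a representative of $\sigma$ gives a bijection $W^{S \setminus \{\alpha_n\}} \to W^{S \setminus \{\alpha_{n-1}\}}$; in the resulting parameterization the innermost (rightmost) factor of every element of $W^{S \setminus \{\alpha_{n-1}\}}$ is of type $w_{n-1}$ rather than $w_n$, with the alternation pattern and strict decrease $l_n < l_{n-1} < \cdots < l_{n-h}$ as in Theorem \ref{thm:desc}. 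One equivalently reads this directly off \cite[Theorem 4, p.703]{min coset} applied to the other spin-type maximal parabolic. Throughout I will use $\omega_{n-1} = \frac{1}{2}(\alpha_1 + 2\alpha_2 + \cdots + (n-2)\alpha_{n-2} + \frac{n}{2}\alpha_{n-1} + \frac{n-2}{2}\alpha_n)$ from \cite[p. 69]{Hum1}.

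With this setup, the three cases follow from the same machinery as in Lemma \ref{type Dn min element}: Lemma \ref{reduced expression} forces the leftmost letter of any reduced expression of $w_{s,n-1}$ to be $s_s$; Lemma \ref{ss=s,min}$(i)$ forces $-1 < \langle w_{s,n-1}(\omega_{n-1}), \lambda_s\rangle \le 0$; and Bruhat minimality combined with the strict decrease of the $l$-indices determines the element uniquely. In case $(1)$ with $s=n-1$, the reflection $s_{n-1}$ occurs only in $w_{n-1}$-type factors, and since the coefficient of $\alpha_{n-1}$ in $\omega_{n-1}$ is $n/4$, Lemma \ref{ss=s,min}$(i)$ pins the number of $w_{n-1}$-type factors to $\lceil n/4 \rceil = p$; minimality then forces consecutive $l$-indices to drop by $1$, giving the stated alternating word starting with $w_{n-1}(n)$. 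In case $(2)$ with $s=n$, the reflection $s_n$ occurs only in $w_n$-type factors and $\alpha_n$ has coefficient $(n-2)/4$ in $\omega_{n-1}$, giving $\lceil (n-2)/4 \rceil = p$. In case $(3)$ with $s=n-i$, both $w_n(l)$ and $w_{n-1}(l)$ contain exactly one copy of $s_{n-i}$ for $l \le n-i$ and none for $l=n-i+1$, so the outermost factor is $w_*(n-i+1)$ for some $* \in \{n-1,n\}$ and the remaining $p = \lceil (n-i)/2 \rceil$ inner factors (with indices from $n-i$ down to $n-i-p+2$) each contribute exactly one $s_{n-i}$; since the innermost factor must be $w_{n-1}$ and the alternation is strict, the parity of $p$ determines whether $*=n$ or $*=n-1$.

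The main obstacle is the parity-dependent bookkeeping in case $(3)$: one must verify that the outermost choice $w_*(n-i+1)$ dictated by the parity of $p$ is compatible with the innermost factor being $w_{n-1}(n-i-p+2)$, with the strict decrease $l_n < l_{n-1} < \cdots$ in the parameterization, and with the leftmost-letter constraint from Lemma \ref{reduced expression}. Once these consistency checks are performed the claimed formulas follow, and the uniqueness of $w_{s,n-1}$ is then automatic from Corollary \ref{unique min}.
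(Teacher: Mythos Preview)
Your approach matches the paper's, which after recording $\omega_{n-1}=\frac{1}{2}(\alpha_{1}+2\alpha_{2}+\cdots+(n-2)\alpha_{n-2}+\frac{n}{2}\alpha_{n-1}+\frac{n-2}{2}\alpha_{n})$ simply says ``the proof is similar to Lemma \ref{type Dn min element}''. Invoking the diagram automorphism swapping $\alpha_{n-1}$ and $\alpha_n$ is a clean way to transport Theorem \ref{thm:desc} to a parameterization of $W^{S\setminus\{\alpha_{n-1}\}}$ with innermost factor of type $w_{n-1}$, and the rest of your outline (Lemma \ref{reduced expression} for the leftmost letter, Lemma \ref{ss=s,min}$(i)$ to pin down $p$, minimality to force consecutive $l$-indices) is exactly the machinery of Lemma \ref{type Dn min element}.

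There is a bookkeeping slip in your case $(3)$. You assert that $w_{*}(l)$ contains no $s_{n-i}$ when $l=n-i+1$, but since $w_{*}(l)=s_{l-1}\cdots s_{n-2}s_{*}$ for $2\le l\le n-1$, the factor $w_{*}(n-i+1)$ \emph{begins} with $s_{n-i}$; this is precisely why Lemma \ref{reduced expression} forces $l_{n-h}=n-i+1$. Consequently every one of the $p$ factors (indices $n-i+1$ down to $n-i-p+2$) contributes exactly one $s_{n-i}$, so the total number of factors is $p$, not $p+1$ as your phrase ``remaining $p$ inner factors'' suggests (note also that the range $n-i$ down to $n-i-p+2$ contains only $p-1$ integers). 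With this correction the count $p=\lceil (n-i)/2\rceil$ and your parity argument for the outermost subscript go through unchanged.
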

	\begin{proof}
		Recall from \cite[p.69]{Hum1}, $\omega_{n-1}=\frac{1}{2}(\alpha_{1}+2\alpha_{2}+\cdots+(n-2)\alpha_{n-2}+\frac{1}{2}n\alpha_{n-1}+\frac{1}{2}(n-2)\alpha_{n})$. Then the proof is similar to Lemma \ref{type Dn min element}.
	\end{proof}
	\begin{lemma}
		\begin{enumerate}
			\item For $s=n-1$, $X(w_{s,n-1})^{ss}_{\lambda_{s}}(\mathcal{L}(4\omega_{n-1}))=X(w_{s,n-1})^{s}_{\lambda_{s}}(\mathcal{L}(4\omega_{n-1}))$ if and only if $n\not\equiv0 \ (mod \ 4)$.
			\item For $s=n$, $X(w_{s,n-1})^{ss}_{\lambda_{s}}(\mathcal{L}(4\omega_{n-1}))=X(w_{s,n-1})^{s}_{\lambda_{s}}(\mathcal{L}(4\omega_{n-1}))$ if and only if $n\not\equiv 2 \ (mod\ 4)$.
			\item For $1\leq s\leq n-2$, $X(w_{s,n-1})^{ss}_{\lambda_{s}}(\mathcal{L}(4\omega_{n-1}))=X(w_{s,n-1})^{s}_{\lambda_{s}}(\mathcal{L}(4\omega_{n-1}))$ if and only if $s\not\equiv 0 \ (mod\ 2)$
		\end{enumerate}
	\end{lemma}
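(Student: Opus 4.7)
The plan is to reduce everything to a coefficient computation via Lemma \ref{ss=s,min}(ii): the equality $X(w_{s,n-1})^{ss}_{\lambda_{s}}(\mathcal{L}(4\omega_{n-1}))=X(w_{s,n-1})^{s}_{\lambda_{s}}(\mathcal{L}(4\omega_{n-1}))$ holds if and only if $\langle w_{s,n-1}(\omega_{n-1}),\lambda_{s}\rangle\neq 0$. Since the pairing $\langle\mu,\lambda_{s}\rangle$ reads off the coefficient of $\alpha_{s}$ when $\mu$ is expanded in the basis of simple roots, what I have to do is compute $w_{s,n-1}(\omega_{n-1})-\omega_{n-1}$ in the root lattice, using the explicit reduced expressions for $w_{s,n-1}$ provided in the previous lemma, and read off the coefficient of $\alpha_{s}$.

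The computation is almost a word-for-word transposition of the proof of Lemma~\ref{ss=s,type Dn}. The reduced expressions for $w_{s,n-1}$ and for $w_{s,n}$ differ only by swapping each occurrence of $w_{n}(l)$ with $w_{n-1}(l)$; correspondingly, the two minuscule weights $\omega_{n-1}=\frac{1}{2}\sum_{j=1}^{n-2}j\alpha_{j}+\frac{n}{4}\alpha_{n-1}+\frac{n-2}{4}\alpha_{n}$ and $\omega_{n}=\frac{1}{2}\sum_{j=1}^{n-2}j\alpha_{j}+\frac{n-2}{4}\alpha_{n-1}+\frac{n}{4}\alpha_{n}$ differ only by swapping the coefficients on $\alpha_{n-1}$ and $\alpha_{n}$. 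Because $\alpha_{n-1}$ and $\alpha_{n}$ enter the Dynkin diagram symmetrically, applying the expression for $w_{s,n-1}$ to $\omega_{n-1}$ yields the same answer as applying $w_{s,n}$ to $\omega_{n}$ after swapping the roles of the two forked nodes. In particular, for case (1) with $p=\lceil n/4\rceil$ I will obtain
\[
w_{s,n-1}(\omega_{n-1})=\omega_{n-1}-\sum_{k=n-2p+1}^{n-2}(k-n+2p)\alpha_{k}-p\,\alpha_{n-1}-(p-1)\alpha_{n},
\]
so that $\langle w_{s,n-1}(\omega_{n-1}),\lambda_{n-1}\rangle=\frac{n}{4}-\lceil\frac{n}{4}\rceil$, which is nonzero precisely when $n\not\equiv 0\pmod 4$.

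Case (2) proceeds identically, using $p=\lceil(n-2)/4\rceil$: the coefficient of $\alpha_{n}$ in $w_{s,n-1}(\omega_{n-1})$ becomes $\frac{n-2}{4}-\lceil\frac{n-2}{4}\rceil$, which vanishes iff $n\equiv 2\pmod 4$. For case (3), write $s=n-i$ with $2\leq i\leq n-1$ and $p=\lceil(n-i)/2\rceil$, and split on the parity of $p$ as in Lemma~\ref{type Dn min element}(3); in both parities the coefficient attached to $\alpha_{s}$ in $w_{s,n-1}(\omega_{n-1})-\omega_{n-1}$ is $-p$, so
\[
\langle w_{s,n-1}(\omega_{n-1}),\lambda_{s}\rangle=\tfrac{s}{2}-\bigl\lceil\tfrac{s}{2}\bigr\rceil,
\]
which is nonzero precisely when $s$ is odd. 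Applying Lemma~\ref{ss=s,min}(ii) in each of the three cases finishes the proof.

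The only possible obstacle is bookkeeping: one must verify that each simple reflection in the reduced expression of $w_{s,n-1}$ changes $\omega_{n-1}$ in the expected way, and that the swap of $\alpha_{n-1}\leftrightarrow\alpha_{n}$ relative to the $w_{s,n}$-computation is carried out consistently when $p$ is odd versus even in case (3). Once this is tracked carefully, extracting the coefficient of $\alpha_{s}$ and invoking Lemma~\ref{ss=s,min}(ii) is immediate, and no new ideas beyond those used for the $\omega_{n}$ case are required.
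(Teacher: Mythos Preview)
Your proposal is correct and follows exactly the paper's approach: the paper's proof simply reads ``Proof is similar to that of Lemma~\ref{ss=s,type Dn},'' and you have spelled out precisely that computation, reducing via Lemma~\ref{ss=s,min}(ii) to the coefficient of $\alpha_{s}$ in $w_{s,n-1}(\omega_{n-1})$ and exploiting the $\alpha_{n-1}\leftrightarrow\alpha_{n}$ symmetry between the $\omega_{n-1}$ and $\omega_{n}$ cases. The explicit formulas and the resulting values $\tfrac{n}{4}-\lceil\tfrac{n}{4}\rceil$, $\tfrac{n-2}{4}-\lceil\tfrac{n-2}{4}\rceil$, and $\tfrac{s}{2}-\lceil\tfrac{s}{2}\rceil$ are all correct.
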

	\begin{proof}
		Proof is similar to that of  Lemma \ref{ss=s,type Dn}.
	\end{proof}
	\subsection{Type $E_{6}$}
	Note that $\omega_{1}$, $\omega_{6}$ are the only minuscule fundamental weights (see \cite[ p.180]{LW}).
	Recall from  \cite[p.69 ]{Hum1}, $\omega_{1}=\frac{1}{3}(4\alpha_{1}+3\alpha_{2}+5\alpha_{3}+6\alpha_{4}+4\alpha_{5}+2\alpha_{6})$ and $\omega_{6}=\frac{1}{3}(2\alpha_{1}+3\alpha_{2}+4\alpha_{3}+6\alpha_{4}+5\alpha_{5}+4\alpha_{6})$.
	\subsubsection{Description of $w_{s,1}$}
	\begin{lemma}
		\begin{enumerate}
			\item  $w_{1,1}=s_{1}s_{3}s_{4}s_{5}s_{2}s_{4}s_{3}s_{1}$ 
			
			\item $w_{2,1}=s_{2}s_{4}s_{3}s_{1}$ 
			
			\item  $w_{3,1}=s_{3}s_{4}s_{2}s_{5}s_{4}s_{3}s_{1}$
			
			\item  $w_{4,1}=s_{4}s_{5}s_{2}s_{4}s_{3}s_{1}$ 
			
			\item  $w_{5,1}=s_{5}s_{4}s_{6}s_{2}s_{5}s_{4}s_{3}s_{1}$

			\item $w_{6,1}=s_{6}s_{5}s_{4}s_{3}s_{1}$ 
		\end{enumerate}
		
	\end{lemma}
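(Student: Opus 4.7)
The approach is a case-by-case verification for the six values of $s$, leveraging Corollary \ref{unique min}, which guarantees the unique minimal $w_{s,1} \in W^{S\setminus\{\alpha_{1}\}}$ such that $X(w_{s,1})^{ss}_{\lambda_{s}}(\mathcal{L}(m\omega_{1})) \neq \phi$. Since $\omega_{1} = \tfrac{1}{3}(4\alpha_{1}+3\alpha_{2}+5\alpha_{3}+6\alpha_{4}+4\alpha_{5}+2\alpha_{6})$, the least $m$ with $m\omega_{1}$ in the root lattice is $m=3$. By Lemma \ref{main lemma}, the characterization reduces to the numerical condition $\langle w_{s,1}(\omega_{1}),\lambda_{s}\rangle \leq 0$ together with Bruhat-minimality. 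Reading off the coefficient of $\alpha_{s}$ in $\omega_{1}$ yields $\langle \omega_{1},\lambda_{s}\rangle = \tfrac{4}{3},1,\tfrac{5}{3},2,\tfrac{4}{3},\tfrac{2}{3}$ for $s=1,\ldots,6$, respectively.

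For each $s$, I would carry out three checks on the proposed word. First, verify that the expression is reduced and that it begins with $s_{s}$, as forced by Lemma \ref{reduced expression}; inspection of the six listed words shows each begins with the correct simple reflection. Second, verify that $w_{s,1} \in W^{S\setminus\{\alpha_{1}\}}$ by computing $w_{s,1}(\alpha_{j})$ for $j \neq 1$ via successive application of $s_{i}(\alpha_{j}) = \alpha_{j} - c_{ji}\alpha_{i}$ using the $E_{6}$ Cartan integers, and checking positivity in each case. Third, compute $w_{s,1}(\omega_{1})$ by successive reflection, starting from $s_{i}(\omega_{1}) = \omega_{1} - \delta_{i,1}\alpha_{1}$ and then propagating reflections through the resulting root combination; the coefficient of $\alpha_{s}$ in the final expression is $\langle w_{s,1}(\omega_{1}),\lambda_{s}\rangle$, which I would verify lies in $(-\tfrac{1}{3},0]$, consistent with Lemma \ref{ss=s,min}(i).

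The main obstacle is establishing Bruhat-minimality: producing an element satisfying the inequality does not on its own identify it as the minimum. The clean route is a divisor check. For each simple reflection $s_{j}$ with $\ell(s_{j}w_{s,1}) = \ell(w_{s,1})-1$, I would verify that $s_{j}w_{s,1} \in W^{S\setminus\{\alpha_{1}\}}$ (when it is) and that $\langle s_{j}w_{s,1}(\omega_{1}),\lambda_{s}\rangle > 0$. Using $W$-invariance of the pairing together with $s_{j}(\lambda_{s}) = \lambda_{s} - \delta_{s,j}\alpha_{j}$, this reduces to the single value $\langle w_{s,1}(\omega_{1}),\lambda_{s}\rangle + \delta_{s,j}$, which by the bound $\langle w_{s,1}(\omega_{1}),\lambda_{s}\rangle > -\tfrac{1}{3}$ is strictly positive precisely when $j=s$ or when the value at $w_{s,1}$ already exceeds a fixed shift. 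Once every covering divisor in $X(w_{s,1})$ fails the semistability inequality, the proposed $w_{s,1}$ is a minimal element of $W^{S\setminus\{\alpha_{1}\}}$ satisfying the numerical criterion, so by the uniqueness in Corollary \ref{unique min} it must coincide with the true minimal one.

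No individual case requires new ideas; the labor lies in the six explicit reflection computations, which are repetitive but routine once the reflection data in type $E_{6}$ are tabulated. The argument generalizes verbatim to $\omega_{6}$ and is what drives the next lemma.
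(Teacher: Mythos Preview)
Your approach is correct and is essentially the same verification strategy the paper uses: compute $w(\alpha_j)$ for $j\neq 1$ to confirm membership in $W^{S\setminus\{\alpha_1\}}$, check reducedness, and read off $\langle w(\omega_1),\lambda_s\rangle$ from $w(\omega_1)$. One slip: the bound from Lemma~\ref{ss=s,min}(i) is $(-1,0]$, not $(-\tfrac{1}{3},0]$; for instance $\langle w_{1,1}(\omega_1),\lambda_1\rangle=\tfrac{4}{3}-2=-\tfrac{2}{3}$, which already falls outside your stated interval. This does not affect your argument, since the divisor formula $\langle s_jw(\omega_1),\lambda_s\rangle=\langle w(\omega_1),\lambda_s\rangle+\delta_{j,s}$ only needs the correct bound $>-1$ to force positivity at $j=s$.

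The paper streamlines the casework in a way you do not: it observes that $w_{i,1}$ for $i=2,3,4$ are right factors of $w_{1,1}$, and $w_{6,1}$ is a right factor of $w_{5,1}$, so (via Lemma~\ref{schubert divisor} applied iteratively) it suffices to verify $W^{S\setminus\{\alpha_1\}}$-membership and reducedness only for $w_{1,1}$ and $w_{5,1}$. It also checks reducedness via the height identity $\ell(w)=\mathrm{ht}(\omega_1-w(\omega_1))$ rather than tracking reflections. Conversely, the paper leaves the minimality verification entirely implicit, whereas you make it explicit through the divisor check; your argument here is exactly what Corollary~\ref{unique pick} encodes (the unique left descent must be $s_s$), so both routes arrive at the same place.
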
	
	\begin{proof}
		Note that for $i=1,2,3,4$, $w_{1,1}=v_{i}w_{i,1}$ for some $v_{i}$, and $w_{5,1}=s_{5}s_{4}s_{2}w_{6,1}$. Therefore, it suffices to prove that $w_{1,1}, w_{5,1}\in W^{S\setminus \{\alpha_{1}\}}$ and they are reduced.
		\begin{itemize}
			\item	$w_{1,1}(\alpha_{2})=\alpha_{4}+\alpha_{5}+\alpha_{3}+\alpha_{1}$, $w_{1,1}(\alpha_{3})=\alpha_{3}$, $w_{1,1}(\alpha_{4})=\alpha_{4}$, $w_{1,1}(\alpha_{5})=\alpha_{2}$, $w_{1,1}(\alpha_{6})=\alpha_{5}+\alpha_{6}+\alpha_{4}+\alpha_{3}+\alpha_{1}$. Hence, $w_{1,1}(\alpha)>0$ for all $\alpha\in S\setminus \{\alpha_{1}\}$. This implies $w_{1,1}\in  W^{S\setminus \{\alpha_{1}\}}$. 	Further  we have $w_{1,1}(\omega_{1})=\omega_{1}-2\alpha_{1}-\alpha_{2}-2\alpha_{3}-2\alpha_{4}-\alpha_{5}$. Hence $ht(\omega_{1}-w_{1,1}(\omega_{1}))=8$. Since number of simple reflections involved in $w_{1,1}$ is also 8, using  \cite[Lemma 1.5, p.83]{s kannan} we conclude that $w_{1,1}$ is a reduced word.\item $w_{5,1}(\alpha_{2})=\alpha_{6}$, $w_{5,1}(\alpha_{3})=\alpha_{1}$, $w_{5,1}(\alpha_{4})=\alpha_{3}+\alpha_{4}+\alpha_{5}$, $w_{5,1}(\alpha_{5})=\alpha_{2}$, $w_{5,1}(\alpha_{6})=
			\alpha_{4}$. Therefore, $w_{5,1}\in  W^{S\setminus \{\alpha_{1}\}}$. We have 
			$w_{5,1}(\omega_{1})=\omega_{1}-\alpha_{1}-\alpha_{2}-\alpha_{3}-2\alpha_{4}-2\alpha_{5}-\alpha_{6}$. Hence $ht(\omega_{1}-w_{5,1}(\omega_{1}))=8$. Then the similar argument shows that the expression above for $w_{5,1}$ is a reduced.\end{itemize}\end{proof}
	\begin{lemma}
		$X(w_{s,r})^{ss}_{\lambda_{s}}(\mathcal{L}(3\omega_{1}))=X(w_{s,r})^{s}_{\lambda_{s}}(\mathcal{L}(3\omega_{1}))$ if and only if $s\neq 2,4$
	\end{lemma}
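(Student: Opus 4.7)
The plan is to apply Lemma \ref{ss=s,min}(ii), which reduces the claim to proving that $\langle w_{s,1}(\omega_{1}),\lambda_{s}\rangle\neq 0$ if and only if $s\neq 2,4$. Since $3\omega_{1}=4\alpha_{1}+3\alpha_{2}+5\alpha_{3}+6\alpha_{4}+4\alpha_{5}+2\alpha_{6}$ lies in the root lattice, the choice $m=3$ is legitimate, and explicit reduced expressions for each $w_{s,1}$ are already in hand from the preceding lemma. So the argument will be a direct case-by-case calculation.

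For the main computation, I would use the following feature of minuscule weights: if $\omega_{1}$ is minuscule and $w\in W^{S\setminus\{\alpha_{1}\}}$ has a reduced expression $w=s_{i_{1}}\cdots s_{i_{k}}$, then
\[
w(\omega_{1})=\omega_{1}-\sum_{j=1}^{6}n_{j}\alpha_{j},
\]
where $n_{j}$ is the number of occurrences of $s_{j}$ in the expression. This is the same identity that the preceding lemma tacitly used to write down $w_{1,1}(\omega_{1})$ and $w_{5,1}(\omega_{1})$ from their word lengths; it is proved by an easy induction on $\ell(w)$, the point being that for $w$ minimal in its coset each simple reflection applied strictly lowers the depth of a weight of $V(\omega_{1})$, so contributes exactly one simple root. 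Pairing with $\lambda_{s}$ then yields
\[
\langle w_{s,1}(\omega_{1}),\lambda_{s}\rangle \;=\; \langle\omega_{1},\lambda_{s}\rangle - n_{s},
\]
and $\langle\omega_{1},\lambda_{s}\rangle$ is read off directly from the expansion $\omega_{1}=\tfrac{1}{3}(4\alpha_{1}+3\alpha_{2}+5\alpha_{3}+6\alpha_{4}+4\alpha_{5}+2\alpha_{6})$.

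Feeding the six reduced expressions from the preceding lemma into this formula finishes the proof. For $s=1$ one counts $n_{1}=2$, giving $\tfrac{4}{3}-2=-\tfrac{2}{3}\neq 0$; for $s=3$, $n_{3}=2$, giving $\tfrac{5}{3}-2=-\tfrac{1}{3}\neq 0$; for $s=5$, $n_{5}=2$, giving $\tfrac{4}{3}-2=-\tfrac{2}{3}\neq 0$; and for $s=6$, $n_{6}=1$, giving $\tfrac{2}{3}-1=-\tfrac{1}{3}\neq 0$. On the other hand, for $s=2$ one has $n_{2}=1$ and $\langle\omega_{1},\lambda_{2}\rangle=1$, so the pairing is $0$; and for $s=4$ one has $n_{4}=2$ and $\langle\omega_{1},\lambda_{4}\rangle=2$, again giving $0$. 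Hence the pairing vanishes exactly when $s\in\{2,4\}$, as desired.

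Since every ingredient is already in place, there is no real conceptual obstacle; the only step that deserves a moment of care is the minuscule root-counting identity above, but this is standard and was in effect used already in the proof of the preceding lemma. Once it is noted, the whole argument collapses to six two-line arithmetic checks.
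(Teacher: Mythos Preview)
Your proposal is correct and follows essentially the same approach as the paper: both reduce to Lemma \ref{ss=s,min}(ii) and then compute $\langle w_{s,1}(\omega_{1}),\lambda_{s}\rangle$ from the explicit reduced expressions and the coefficients of $\omega_{1}$ in the simple-root basis. The paper's proof is a one-line ``compute and see that the pairing vanishes iff $s=2,4$''; you have simply made the six arithmetic checks explicit and spelled out the minuscule root-counting identity (which the paper uses tacitly here and explicitly, e.g., in Lemma \ref{criterion An}).
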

	\begin{proof}
		From the description of $\omega_{1}$, we have $\langle w_{s,1}(\omega_{1}),\lambda_{s}\rangle$ is zero if and only if $s=2,4$ Then the proof follows from the criterion proved in Lemma \ref{ss=s,min} $(ii)$.
	\end{proof}
	\subsubsection{Description of $w_{s,6}$}
	\begin{lemma}
		\begin{enumerate}
			\item  $w_{1,6}=s_{1}s_{3}s_{4}s_{5}s_{6}$ 
			\item $w_{2,6}=s_{2}s_{4}s_{5}s_{6}$ 
			\item  $w_{3,6}=s_{3}s_{4}s_{2}s_{1}s_{3}s_{4}s_{5}s_{6}$ 
			\item $w_{4,6}=s_{4}s_{3}s_{2}s_{4}s_{5}s_{6}$
			\item $w_{5,6}=s_{5}s_{4}s_{2}s_{3}s_{4}s_{5}s_{6}$ 
			\item $w_{6,6}=s_{6}s_{5}s_{4}s_{3}s_{2}s_{4}s_{5}s_{6}$ 
		\end{enumerate}
	\end{lemma}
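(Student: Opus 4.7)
The plan is to follow the structure of the proof of the analogous lemma for $w_{s,1}$. By Corollary \ref{unique min}, there is a unique element $w_{s,6} \in W^{S\setminus\{\alpha_6\}}$ of minimal length satisfying $\langle w_{s,6}(\omega_6), \lambda_s\rangle \leq 0$, so it suffices to show that each of the six candidate words represents this element. For each $s$, I must check that the word is reduced, the element lies in $W^{S\setminus\{\alpha_6\}}$, the pairing $\langle w_{s,6}(\omega_6), \lambda_s\rangle$ lies in $(-1,0]$ (as forced by Lemma \ref{ss=s,min}(i)), and no strictly shorter element of $W^{S\setminus\{\alpha_6\}}$ achieves a non-positive pairing.

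The verification is streamlined by right-factor relations among the candidates. Using that $s_2 s_3 = s_3 s_2$ (since $\alpha_2$ and $\alpha_3$ are non-adjacent in the $E_6$ diagram), one obtains reduced factorizations $w_{3,6} = s_3 s_4 s_2 \cdot w_{1,6}$, $w_{4,6} = s_4 s_3 \cdot w_{2,6}$, $w_{5,6} = s_5 s_4 s_3 \cdot w_{2,6}$, and $w_{6,6} = s_6 s_5 s_4 s_3 \cdot w_{2,6}$, with lengths adding in each case. Thus I fully verify only the two base cases $w_{1,6}$ and $w_{2,6}$, then check separately that each prefix above continues to act positively on $S \setminus \{\alpha_6\}$.

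For the base cases, iterated application of the reflections to $\omega_6$ yields $w_{1,6}(\omega_6) = \omega_6 - \alpha_1 - \alpha_3 - \alpha_4 - \alpha_5 - \alpha_6$ and $w_{2,6}(\omega_6) = \omega_6 - \alpha_2 - \alpha_4 - \alpha_5 - \alpha_6$. In each case the height of $\omega_6 - w_{s,6}(\omega_6)$ equals the length of the proposed word, so by the height criterion \cite[Lemma 1.5]{s kannan} the word is reduced. Computing $w_{s,6}(\alpha_j)$ for $j \neq 6$ and observing positivity then places the element in $W^{S\setminus\{\alpha_6\}}$. Finally, using $\omega_6 = \frac{1}{3}(2\alpha_1 + 3\alpha_2 + 4\alpha_3 + 6\alpha_4 + 5\alpha_5 + 4\alpha_6)$, the pairings $\langle w_{s,6}(\omega_6), \lambda_s\rangle$ evaluate to $-\frac{1}{3}, 0, -\frac{2}{3}, 0, -\frac{1}{3}, -\frac{2}{3}$ for $s = 1, \ldots, 6$, all inside $(-1, 0]$.

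The principal obstacle is ruling out strictly shorter elements of $W^{S\setminus\{\alpha_6\}}$ with $\langle v(\omega_6), \lambda_s \rangle \leq 0$. The count $n_s$ of $s_s$ occurring in a reduced expression of the true minimal element is forced, by the argument in the proof of Corollary \ref{unique min}, to equal the least integer $q$ with $q \geq \langle \omega_6, \lambda_s\rangle$, which matches the count in each of my candidates. The remaining reflections are then pinned down by Lemma \ref{reduced expression} (the first letter must be $s_s$), Corollary \ref{unique pick} ($w_{s,6}^{-1} \in W^{S\setminus\{\alpha_s\}}$, fixing the last letter), and membership in $W^{S\setminus\{\alpha_6\}}$. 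A short case analysis, working in the minuscule Bruhat poset of $W^{S\setminus\{\alpha_6\}}$ attached to the $E_6$ Cayley plane, then rules out any shorter element and completes the identification.
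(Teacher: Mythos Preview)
Your approach is correct but runs in the opposite direction from the paper's. The paper exploits the suffix property of minimal coset representatives: it observes that $w_{3,6}=s_{3}s_{4}s_{2}\,w_{1,6}$ and that $w_{6,6}=v_{i}\,w_{i,6}$ for $i=2,4,5,6$, and then verifies only the two \emph{longest} candidates $w_{3,6}$ and $w_{6,6}$ lie in $W^{S\setminus\{\alpha_{6}\}}$ and are reduced (via the positivity check $w(\alpha_{j})>0$ for $j\neq 6$ and the height criterion). The remaining four elements are right factors of these two, and right factors of a reduced element of $W^{J}$ are automatically reduced and in $W^{J}$; so no further work is needed. You instead verify the two \emph{shortest} candidates $w_{1,6}$ and $w_{2,6}$ and then must separately check that each of the longer prefixes preserves positivity on $S\setminus\{\alpha_{6}\}$ and that lengths add. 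This is valid but strictly more laborious: the suffix direction gives both reducedness and the $W^{J}$ condition for free, while your prefix direction gives neither for free.

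On minimality, your treatment is actually more explicit than the paper's (which leaves it essentially implicit after computing $w(\omega_{6})$). A cleaner way to close your argument, avoiding the final ``short case analysis'', is this: once you know the candidate $w$ is reduced, lies in $W^{S\setminus\{\alpha_{6}\}}$, and satisfies $w^{-1}\in W^{S\setminus\{\alpha_{s}\}}$ (which you can check directly, rather than invoking Corollary \ref{unique pick} as a consequence), the only Schubert divisor of $X(w)$ is $X(s_{s}w)$. Since $\langle w(\omega_{6}),\alpha_{s}\rangle=-1$ in the minuscule case, one has $\langle s_{s}w(\omega_{6}),\lambda_{s}\rangle=\langle w(\omega_{6}),\lambda_{s}\rangle+1>0$, so by Lemma \ref{main lemma} no $v<w$ admits semistable points, and uniqueness from Corollary \ref{unique min} finishes.
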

	\begin{proof}
		We have $w_{3,6}=s_{3}s_{4}s_{2}w_{1,6}$ and for $i=2,4,5,6$, $w_{6,6}=v_{i}w_{i,6}$ for some $v_{i}$. Therefore, it suffices to show that $w_{3,6}, w_{6,6}\in W^{S\setminus \{\alpha_{6}\}}$ and they are reduced.\begin{itemize}
			\item
			$w_{3,6}(\alpha_{1})=\alpha_{4}$, $w_{3,6}(\alpha_{2})=\alpha_{1}$, $w_{3,6}(\alpha_{3})=\alpha_{2}$, $w_{3,6}(\alpha_{4})=\alpha_{2}$, $w_{3,6}(\alpha_{5})=\alpha_{6}$. This shows that $w_{3,6}(\alpha)>0$  for all $\alpha\in S\setminus\{\alpha_{6}\}$. Hence $w_{3,6}\in W^{S\setminus \{\alpha_{6}\}}$ . We also have that $w_{3,6}(\omega_{6})=\omega_{6}-\alpha_{6}-\alpha_{5}-2\alpha_{4}-2\alpha_{3}-\alpha_{2}-\alpha_{1}$. Hence, $ht(\omega_{6}-w_{3,6}(\omega_{6}))=8$. Since the number of simple reflection appearing in the expression of $w_{3,6}$ is also 8, \cite[Lemma 1.5, p.83]{s kannan} implies that the word is reduced.\item
			$w_{6,6}(\alpha_{1})=\alpha_{1}+\alpha_{3}+\alpha_{4}+\alpha_{5}+\alpha_{6}$, $w_{6,6}(\alpha_{2})(\alpha_{3})$, $w_{6,6}(\alpha_{3})=\alpha_{2}$, $w_{6,6}(\alpha_{4})=\alpha_{4}$, $w_{6,6}(\alpha_{5})=\alpha_{5}$. Moreover, $w_{6,6}(\omega_{6})=\omega_{6}-2\alpha_{6}-2\alpha_{5}-2\alpha_{4}-\alpha_{2}-\alpha_{3}$. Hence $ht(\omega_{6}-w_{6,6}(\omega_{6})=8$. Then the similar argument as above shows that $w_{6,6}\in W^{S\setminus \{\alpha_{6}\}}$ and the expression of $w_{6,6}$ is reduced.
		\end{itemize}
	\end{proof}	
	\begin{lemma}	$X(w_{s,r})^{ss}_{\lambda_{s}}(\mathcal{L}(3\omega_{6}))=X(w_{s,r})^{s}_{\lambda_{s}}(\mathcal{L}(3\omega_{6}))$ if and only if $s\neq 2,4$
	\end{lemma}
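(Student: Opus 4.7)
The plan is to reduce the statement, via Lemma~\ref{ss=s,min}(ii), to determining for which $s \in \{1,\ldots,6\}$ the pairing $\langle w_{s,6}(\omega_6), \lambda_s\rangle$ is nonzero. Writing $w_{s,6}(\omega_6) = \omega_6 - \sum_j c_j \alpha_j$, this is the same question as asking for which $s$ the coefficient $c_s$ differs from $\langle \omega_6, \lambda_s\rangle$, equivalently whether the $\alpha_s$-coefficient of $w_{s,6}(\omega_6)$ is nonzero.

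The computational shortcut I would exploit is this: since $\omega_6$ is minuscule, $\langle v(\omega_6), \alpha_i\rangle \in \{-1,0,1\}$ for every $v \in W$ and every simple root $\alpha_i$, so each simple reflection acts on a $W$-translate of $\omega_6$ either trivially or by subtracting a single simple root. Hence, for any reduced expression $w_{s,6} = s_{i_1}\cdots s_{i_k}$, the integer $c_j$ equals the number of indices $l$ with $i_l = j$. Combined with the identity
\begin{equation*}
\langle w_{s,6}(\omega_6), \lambda_s\rangle \,=\, \langle \omega_6, \lambda_s\rangle - c_s
\end{equation*}
and with the expansion $\omega_6 = \tfrac{2}{3}\alpha_1 + \alpha_2 + \tfrac{4}{3}\alpha_3 + 2\alpha_4 + \tfrac{5}{3}\alpha_5 + \tfrac{4}{3}\alpha_6$ recalled from \cite[p.69]{Hum1}, the proof collapses to counting the multiplicity of each simple reflection in the explicit reduced expressions supplied by the preceding lemma.

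Reading off the six cases: for $s \in \{1,3,5,6\}$ the number $\langle \omega_6, \lambda_s\rangle$ is not an integer, so subtracting the integer $c_s$ can never produce $0$, and the pairing is automatically nonzero. For $s = 2$ the word $w_{2,6} = s_2 s_4 s_5 s_6$ contains $s_2$ exactly once, giving $1 - 1 = 0$; and for $s = 4$ the word $w_{4,6} = s_4 s_3 s_2 s_4 s_5 s_6$ contains $s_4$ exactly twice, giving $2 - 2 = 0$. Therefore $\langle w_{s,6}(\omega_6), \lambda_s\rangle \neq 0$ if and only if $s \notin \{2,4\}$, and the claim follows from Lemma~\ref{ss=s,min}(ii).

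The only point that deserves justification is the assertion that the multiplicity of $s_j$ in a reduced expression of a minuscule $W^{S\setminus\{\alpha_r\}}$-element equals the $\alpha_j$-coefficient of $\omega_r - w(\omega_r)$, but this is precisely the ingredient already used (via \cite[Lemma 1.5, p.83]{s kannan}) in the preceding lemma to check that the listed words are reduced, so no additional tool is required. The computation itself is routine; the mild ``obstacle'' is merely keeping the branched Dynkin diagram of $E_6$ straight while tracking the action of $s_2$.
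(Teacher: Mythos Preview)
Your proof is correct and follows the same approach as the paper: reduce via Lemma~\ref{ss=s,min}(ii) to checking when $\langle w_{s,6}(\omega_{6}),\lambda_{s}\rangle$ vanishes, using the explicit reduced expressions for $w_{s,6}$ and the expansion of $\omega_{6}$ in simple roots. Your non-integrality shortcut for $s\in\{1,3,5,6\}$ is a pleasant streamlining of the case analysis, but the underlying argument is identical to the paper's (terse) one.
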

	\begin{proof}
		Using the expression of $\omega_{6}$ in terms of simply roots, we see $\langle w_{s,6}(\omega_{6}),\lambda_{s}\rangle$ is zero if and only if $s=2,4$ Then the proof follows from the Lemma \ref{ss=s,min}$(ii)$.
	\end{proof}
	\subsection{Type $E_{7}$}
	Note that the only minuscule fundamental weight in type $E_{7}$ is $\omega_{7}$ (see \cite[ p.180]{LW}). Recall from \cite[p.69 ]{Hum1}, $\omega_{7}=\frac{1}{2}(2\alpha_{1}+3\alpha_{2}+4\alpha_{3}+6\alpha_{4}+5\alpha_{5}+4\alpha_{6}+3\alpha_{7})$.
	\begin{lemma}
		\begin{enumerate}
			\item  $w_{1,7}=s_{1}s_{3}s_{4}s_{5}s_{6}s_{7}$	
			\item  $w_{2,7}=s_{2}s_{4}s_{5}s_{3}s_{4}s_{1}s_{2}s_{3}s_{4}s_{5}s_{6}s_{7}$ .		
			\item  $w_{3,7}=s_{3}s_{4}s_{1}s_{2}s_{3}s_{4}s_{5}s_{6}s_{7}$ 	
			\item $w_{4,7}=s_{4}s_{3}s_{5}s_{4}s_{1}s_{2}s_{3}s_{4}s_{5}s_{6}s_{7}$ 				
			\item  $w_{5,7}=s_{5}s_{6}s_{4}s_{3}s_{5}s_{4}s_{2}s_{1}s_{3}s_{4}s_{5}s_{6}s_{7}$
			\item  $w_{6,7}=s_{6}s_{5}s_{4}s_{3}s_{2}s_{4}s_{5}s_{6}s_{7}$
			\item  $w_{7,7}=s_{7}s_{6}s_{5}s_{4}s_{3}s_{2}s_{4}s_{5}s_{6}s_{7}$
		\end{enumerate}
	\end{lemma}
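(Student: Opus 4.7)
My plan is to verify, for each $s = 1,\ldots,7$, that the displayed word $w_{s,7}^{\ast}$ equals the unique minimal element $w_{s,7} \in W^{S\setminus\{\alpha_{7}\}}$ given by Corollary \ref{unique min} (applied with $m = 2$, since $2\omega_{7}$ is the smallest multiple of $\omega_{7}$ lying in the root lattice). Concretely, for each $s$ I would check three things about the candidate $w_{s,7}^{\ast}$: (a) $w_{s,7}^{\ast} \in W^{S\setminus\{\alpha_{7}\}}$, i.e., $w_{s,7}^{\ast}(\alpha_{j}) > 0$ for all $j \neq 7$; (b) the displayed word is a reduced expression, so its number of letters equals $\ell(w_{s,7}^{\ast})$; (c) $\langle w_{s,7}^{\ast}(\omega_{7}),\lambda_{s}\rangle \leq 0$. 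Condition (c), combined with Lemma \ref{main lemma}, ensures $X(w_{s,7}^{\ast})$ admits a semistable point, so $w_{s,7} \leq w_{s,7}^{\ast}$ in the Bruhat order. Comparing $\ell(w_{s,7}^{\ast})$ against the length of the element produced by the algorithm of Lemma \ref{main lemma2} at $c = q := \lceil m_{s}/2 \rceil$ (where $2\omega_{7} = \sum_{j} m_{j}\alpha_{j}$) will then force equality, since both elements are minimal with $\langle \omega_7 - \cdot(\omega_7),\lambda_s\rangle = q$ and uniqueness in Lemma \ref{main lemma2} pins them down.

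The computational engine is to expand $w_{s,7}^{\ast}(\omega_{7})$ by pushing the simple reflections through $\omega_{7} = \tfrac{1}{2}(2\alpha_{1} + 3\alpha_{2} + 4\alpha_{3} + 6\alpha_{4} + 5\alpha_{5} + 4\alpha_{6} + 3\alpha_{7})$ from the innermost outwards, using the Cartan matrix of $E_{7}$. The resulting expression $\omega_{7} - w_{s,7}^{\ast}(\omega_{7}) = \sum_{j} b_{j}\alpha_{j}$ simultaneously yields the coefficient $b_{s}$ (which controls (c) via $\langle w_{s,7}^{\ast}(\omega_7),\lambda_s\rangle = \langle \omega_7,\lambda_s\rangle - b_s$) and the height $\sum_{j} b_{j}$; by \cite[Lemma 1.5, p.83]{s kannan}, reducedness follows at once whenever this height coincides with the length of the displayed word, dispatching (b). Condition (a) is then a short finite check root by root. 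Moreover, Lemma \ref{reduced expression} provides a free consistency test: every displayed word must begin with the letter $s_{s}$, which indeed holds in each of the seven cases.

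To control redundancy I would exploit suffix relations among the candidates, as in the $E_{6}$ proof above: for instance $w_{1,7}^{\ast}$ is the last six letters of $w_{3,7}^{\ast}$, $w_{6,7}^{\ast}$ is the last nine letters of $w_{7,7}^{\ast}$, and similar inclusions handle $w_{2,7}^{\ast}$ and $w_{4,7}^{\ast}$. Since any suffix of a reduced word is reduced, reducedness need only be verified directly for a handful of maximal-length candidates; the minuscule-coset property (a) then propagates by a short additional check for each shorter word. The principal obstacle is the sheer bulk of the calculation in type $E_{7}$: three of the candidates have length $\geq 11$, and the trivalent branching at $\alpha_{4}$ makes tracking coefficients through chains of reflections error-prone. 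The cleanest but bookkeeping-heaviest route is to run the explicit algorithm of Lemma \ref{main lemma2} on $\omega_{7}$ for each $s$ so that the displayed word emerges as the \emph{output} of that algorithm, rather than being verified in hindsight; this converts the problem into a finite, but delicate, case analysis on the asymmetric $E_{7}$ Dynkin diagram.
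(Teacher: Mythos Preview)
Your approach is essentially the same as the paper's: verify membership in $W^{S\setminus\{\alpha_{7}\}}$ by checking $w(\alpha_{j})>0$ for $j\neq 7$, verify reducedness by matching the number of simple reflections with $ht(\omega_{7}-w(\omega_{7}))$ via \cite[Lemma 1.5]{s kannan}, and exploit right-factor relations among the candidates to reduce the work to a few maximal-length words. The paper carries this out for $w_{2,7}$, $w_{5,7}$, $w_{7,7}$, noting that $w_{2,7}=v_{i}w_{i,7}$ for $i=1,2,3,4$ and $w_{7,7}=s_{7}w_{6,7}$.

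One small correction: your specific suffix claim that ``$w_{1,7}^{\ast}$ is the last six letters of $w_{3,7}^{\ast}$'' is wrong as written, since the last six letters of $s_{3}s_{4}s_{1}s_{2}s_{3}s_{4}s_{5}s_{6}s_{7}$ are $s_{2}s_{3}s_{4}s_{5}s_{6}s_{7}$, not $s_{1}s_{3}s_{4}s_{5}s_{6}s_{7}$; likewise $w_{2,7}^{\ast}$ is the longest candidate and is not contained in any other. The correct organization (which the paper uses) is that $w_{1,7}$, $w_{3,7}$, $w_{4,7}$ all arise as right factors of $w_{2,7}$, so the three ``maximal'' words to treat directly are $w_{2,7}$, $w_{5,7}$, $w_{7,7}$. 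With that bookkeeping adjustment your plan matches the paper.
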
		
	\begin{proof}
		For $i=1,2,3,4$, we have $w_{2,7}=v_{i}w_{i,7}$ for some $v_{i}$ and  $w_{7,7}=s_{7}w_{6,7}$. Therefore, it suffices to show that $w_{2,7},w_{5,7},w_{7,7}\in W^{S\setminus \{\alpha_{7}\}}$ and they are also reduced.	\begin{itemize}
			\item 	$w_{2,7}(\alpha_{1})=\alpha_{5}$, $w_{2,7}(\alpha_{2})=\alpha_{1}$, $w_{2,7}(\alpha_{3})=\alpha_{4}$, $w_{2,7}(\alpha_{4})=\alpha_{3}$, $w_{2,7}(\alpha_{5})=\alpha_{6}+\alpha_{5}+\alpha_{4}+\alpha_{2}$, $w_{2,7}(\alpha_{6})=\alpha_{7}$. Therefore, $w_{2,7}\in W^{S\setminus \{\alpha_{7}\}}$.\\  $w_{2,7}(\omega_{7})=\omega_{7}-\alpha_{7}-\alpha_{6}-2\alpha_{5}-3\alpha_{4}-2\alpha_{3}-2\alpha_{2}-\alpha_{1}$ and $ht(\omega_{7}-w_{2,7}(\omega_{7})=12$.
			\item $w_{5,7}(\alpha_{1})=\alpha_{6}$, $w_{5,7}(\alpha_{2})=\alpha_{1}$, $w_{5,7}(\alpha_{3})=\alpha_{2}+\alpha_{4}+\alpha_{5}$, $w_{5,7}(\alpha_{4})=\alpha_{3}$, $w_{5,7}(\alpha_{5})=\alpha_{4}$, $w_{5,7}(\alpha_{6})=\alpha_{7}+\alpha_{6}+\alpha_{5}$. Therefore, $w_{5,7}\in W^{S\setminus \{\alpha_{7}\}}$.\\
			$w_{5,7}(\omega_{7})=\omega_{7}-\alpha_{7}-2\alpha_{6}-3\alpha_{5}-3\alpha_{4}-2\alpha_{3}-\alpha_{1}-\alpha_{2}$ and $ht(\omega_{7}-w_{5,7}(\omega_{7})=13$.
			\item $w_{7,7}(\alpha_{1})=\alpha_{1}+\alpha_{3}+\alpha_{4}+\alpha_{5}+\alpha_{6}+\alpha_{7}$, $w_{7,7}(\alpha_{2})=\alpha_{3}$, $w_{7,7}(\alpha_{3})=\alpha_{2}$, $w_{7,7}(\alpha_{4})=\alpha_{4}$, $w_{7,7}(\alpha_{5})=\alpha_{5}$, $w_{7,7}(\alpha_{6})=\alpha_{6}$. Therefore, $w_{7,7}\in W^{S\setminus \{\alpha_{7}\}}$.\\
			$w_{7,7}(\omega_{7})=\omega_{7}-2\alpha_{7}-2\alpha_{6}-2\alpha_{5}-2\alpha_{4}-\alpha_{2}-\alpha_{3}$ and $ht(\omega_{7}-w_{7,7}(\omega_{7})=10$
		\end{itemize}
		The above computations and  \cite[Lemma 1.5, p.83]{s kannan} imply that  $w_{2,7}$, $w_{5,7}$, and $w_{7,7}$ are reduced word. 
	\end{proof}
	\begin{lemma}
		$X(w_{s,r})^{ss}_{\lambda_{s}}(\mathcal{L}(2\omega_{7}))=X(w_{s,r})^{s}_{\lambda_{s}}(\mathcal{L}(2\omega_{7}))$ if and only if $s\neq 1,3,4,6$
	\end{lemma}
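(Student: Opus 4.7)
The plan is to reduce the statement to a direct computation using Lemma \ref{ss=s,min}$(ii)$, which says $X(w_{s,r})^{ss}_{\lambda_{s}}(\mathcal{L}(m\omega_{r}))=X(w_{s,r})^{s}_{\lambda_{s}}(\mathcal{L}(m\omega_{r}))$ if and only if $\langle w_{s,r}(\omega_{r}),\lambda_{s}\rangle\neq 0$. Here $r=7$ and the relevant multiple is $m=2$, since $2\omega_{7}$ lies in the root lattice (from the expression of $\omega_{7}$ in terms of simple roots). So the lemma reduces to determining for which $s\in\{1,\ldots,7\}$ one has $\langle w_{s,7}(\omega_{7}),\lambda_{s}\rangle = 0$.

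Next I would use the explicit reduced expressions of $w_{s,7}$ obtained in the preceding lemma to compute $w_{s,7}(\omega_{7})$. For each $s$, letting $n_{k}$ denote the number of occurrences of $s_{k}$ in the given reduced expression of $w_{s,7}$, we have $w_{s,7}(\omega_{7}) = \omega_{7} - \sum_{k=1}^{7} n_{k}\alpha_{k}$, and therefore $\langle w_{s,7}(\omega_{7}),\lambda_{s}\rangle = \langle \omega_{7},\lambda_{s}\rangle - n_{s}$. Using $\omega_{7}=\frac{1}{2}(2\alpha_{1}+3\alpha_{2}+4\alpha_{3}+6\alpha_{4}+5\alpha_{5}+4\alpha_{6}+3\alpha_{7})$, the values $\langle \omega_{7},\lambda_{s}\rangle$ for $s=1,\ldots,7$ are $1,\tfrac{3}{2},2,3,\tfrac{5}{2},2,\tfrac{3}{2}$ respectively. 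From the explicit descriptions of $w_{s,7}$ one reads off $n_{s} = 1,2,2,3,3,2,2$ for $s=1,\ldots,7$ respectively (for $s=2,5$ one may also just read $n_{s}$ off the explicit formulae for $w_{s,7}(\omega_{7})$ already written down in the proof of the previous lemma).

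Assembling these: $\langle w_{s,7}(\omega_{7}),\lambda_{s}\rangle$ equals $0,-\tfrac{1}{2},0,0,-\tfrac{1}{2},0,-\tfrac{1}{2}$ for $s=1,\ldots,7$. Hence this pairing is nonzero precisely when $s\in\{2,5,7\}$, i.e.\ when $s\notin\{1,3,4,6\}$. Applying Lemma \ref{ss=s,min}$(ii)$ concludes the proof.

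I do not expect a genuine obstacle here; everything is a finite check once the explicit $w_{s,7}$ from the previous lemma are in hand. The only point requiring care is the tallying of simple reflections in each $w_{s,7}$, and this is essentially a bookkeeping exercise on the expressions already provided. The proof closely parallels the verification carried out in Lemma \ref{ss=s,type Dn} and in the corresponding lemmas for types $B_{n}$, $C_{n}$, and $E_{6}$.
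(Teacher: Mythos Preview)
Your proposal is correct and follows exactly the same approach as the paper: reduce to Lemma \ref{ss=s,min}$(ii)$ and then check, using the expression of $\omega_{7}$ in simple roots and the explicit reduced expressions of $w_{s,7}$, for which $s$ the pairing $\langle w_{s,7}(\omega_{7}),\lambda_{s}\rangle$ vanishes. The paper states this computation in a single line, whereas you spell out the values $\langle \omega_{7},\lambda_{s}\rangle$ and $n_{s}$; your tabulation is correct and matches the paper's conclusion.
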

	\begin{proof}
		From the description of $w_{7}$, we see that $\langle w_{s,7}(\omega_{7}),\lambda_{s}\rangle=0$ if and only if $s=1,3,4,6$. Then the proof follows from Lemma \ref{ss=s,min}$(ii)$.
	\end{proof}
	\section{GIT quotients of $X(w_{s,r})^{ss}_{\lambda_{s}}(\mathcal{L})$}
	In this section, we prove that if $\omega_{r}$ is minuscule and $\lambda_{s}$ is cominuscule such that\\ $X(w_{s,r})^{ss}_{\lambda_{s}}(\mathcal{L}(m\omega_{r}))=X(w_{s,r})^{s}_{\lambda_{s}}(\mathcal{L}(m\omega_{r}))$, then $\opsgmodx{m}$ is a projective space.
	\begin{lemma}
		Let $P$, $\chi$ and $J$ be as in Section 2. Let $w\in W^{J}$ be a minimal element with the property that $\langle w(\chi),\lambda_{s}\rangle=0$. Then $\GmodX{\lambda_{s}}{X(w)}{\mathcal{L}(\chi)}$ is a point.\label{quotient is point}
	\end{lemma}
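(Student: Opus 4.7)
The plan is to identify the graded ring of invariants
$R:=\bigoplus_{d\ge 0} H^{0}(X(w),\mathcal{L}(d\chi))^{\lambda_{s}}$
with the polynomial ring $\mathbb{C}[p_{w}]$ in one variable. Since $\GmodX{\lambda_{s}}{X(w)}{\mathcal{L}(\chi)}=\mathrm{Proj}(R)$, this will force the quotient to be a point.

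The first step is a positivity observation: for every $v\le w$ in $W^{J}$, $\langle v(\chi),\lambda_{s}\rangle \ge 0$, with equality if and only if $v=w$. Indeed, since $\chi$ is dominant and $v\le w$ in the Bruhat order, \cite[Lemma 1.18]{LW} gives $v(\chi)\ge w(\chi)$, so $v(\chi)-w(\chi)$ is a non-negative integral combination of simple roots. Because $\langle \alpha_{j},\lambda_{s}\rangle=\delta_{j,s}$, pairing with $\lambda_{s}$ yields $\langle v(\chi),\lambda_{s}\rangle \ge \langle w(\chi),\lambda_{s}\rangle=0$. The minimality assumption on $w$ rules out equality when $v<w$, so the inequality is strict for $v<w$.

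The second step is to invoke Standard Monomial Theory: $H^{0}(X(w),\mathcal{L}(d\chi))$ admits a $T$-weight basis of standard monomials $p_{v_{1}}p_{v_{2}}\cdots p_{v_{d}}$ with each $v_{i}\in W^{J}$ satisfying $v_{i}\le w$. Following the sign convention used in the proof of Lemma~\ref{CSS}, such a monomial has $T$-weight $-\sum_{i} v_{i}(\chi)$, so it is $\lambda_{s}$-invariant precisely when $\sum_{i}\langle v_{i}(\chi),\lambda_{s}\rangle=0$. Combining with Step~1, every summand is $\ge 0$, so the sum vanishes only when every factor equals $p_{w}$. Hence $H^{0}(X(w),\mathcal{L}(d\chi))^{\lambda_{s}}=\mathbb{C}\cdot p_{w}^{d}$ is one-dimensional for every $d\ge 0$.

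The last step is formal: $R\cong \mathbb{C}[p_{w}]$ as graded $\mathbb{C}$-algebras, so $\GmodX{\lambda_{s}}{X(w)}{\mathcal{L}(\chi)}=\mathrm{Proj}(\mathbb{C}[p_{w}])$ is a point. The main subtlety lies in Step~2: one must cite the appropriate version of Standard Monomial Theory (tableaux in type $A$, or Littelmann-Seshadri paths in general) to produce a $T$-weight basis of $H^{0}(X(w),\mathcal{L}(d\chi))$ whose elements are monomials in the $p_{v}$ with $v\in W^{J}$ and $v\le w$. Once this combinatorial fact is in hand, the positivity from Step~1 does all the remaining work.
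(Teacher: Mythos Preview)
Your proposal is correct and follows essentially the same route as the paper. Both arguments show via standard monomial theory that the only possible $T$-weight of a $\lambda_{s}$-invariant section in degree $d$ is the extremal weight $-dw(\chi)$, whence the invariant ring is $\mathbb{C}[p_{w}]$. The paper phrases Step~2 through the convex-hull description of weights, writing any weight of $H^{0}(X(w),\mathcal{L}(\chi))$ as $-\sum c_{i}\tau_{i}(\chi)$ with $c_{i}\in\mathbb{Q}_{\ge 0}$, $\sum c_{i}=1$, $\tau_{i}\le w$; this is exactly what your product-of-$p_{v}$'s description becomes once one passes from the minuscule case to general $\chi$ via LS-paths, a subtlety you rightly flag. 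Your Step~1 is in fact more transparent than the paper's terse appeal to Lemma~\ref{main lemma}: you make explicit the monotonicity $v(\chi)\ge w(\chi)$ for $v\le w$, which together with minimality yields the needed strict positivity.
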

	\begin{proof} 
		Let $p\in H^{0}(X(w),\mathcal{L}(\chi))$ be an $\lambda_{s}$-invariant section of weight $\mu$. 
		Then $\mu=-\sum_{i=2}^{k}c_{i}\tau_{i}(\chi)-c_{1}w(\chi)$, where $\tau_{i}\in W^{J}$, $\tau_{i}<w$ for $i\neq 1$ and $c_{i}\in \mathbb{Q}_{\geq 0}$, $\sum_{i=1}^{k}c_{i}=1$. Since $p$ is $\lambda_{s}$-invariant, we have $\langle\mu,\lambda_{s}\rangle=0$. Therefore, \begin{equation}
			\sum_{i=2}^{k}c_{i}\langle \tau_{i}(\chi),\lambda_{s}\rangle+c_{1}\langle w(\chi),\lambda_{s}\rangle=0
		\end{equation} By hypothesis $\langle w(\chi),\lambda_{s}\rangle=0$. By Lemma \ref{main lemma}, for $\tau_{i}<w$, we have $\langle \tau_{i}(\chi),\lambda_{s}\rangle>0$ for all $2\leq i\leq k$. Hence, from equation (6.1), we have $c_{i}=0$ for all $2\leq i\leq k$. Thus $c_{1}=1$ and $\mu=-w(\chi)$.  Hence, $p_{w}$ is the only (up to non zero scalar) $\lambda_{s}$-invariant section. Therefore, $\GmodX{\lambda_{s}}{X(w)}{\mathcal{L}(\chi)}=Proj(\mathbb{C}[p_{w}])$ consists of a single point.
	\end{proof}
	Now onwards till the end of this section, we assume that $\omega_{r}$ is minuscule.
	\begin{corollary}
		Let $1\leq r,s\leq n$ and $\mathcal{L}=\mathcal{L}(m\omega_{r})$. Assume that $X(w_{s,r})^{ss}_{\lambda_{s}}(\mathcal{L})\neq X(w_{s,r})^{s}_{\lambda_{s}}(\mathcal{L})$, then the quotient $\GmodX{\lambda_{s}}{X(w_{s,r})}{\mathcal{L}}$ is a point.
		\label{qt is point, special case}
	\end{corollary}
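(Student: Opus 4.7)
The plan is to deduce the corollary directly from Lemma \ref{quotient is point} (applied with $\chi = m\omega_{r}$ and $w = w_{s,r}$) by verifying its two hypotheses: that $\langle w_{s,r}(m\omega_{r}),\lambda_{s}\rangle = 0$, and that $w_{s,r}$ is minimal in $W^{S\setminus\{\alpha_{r}\}}$ with this property.

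First, I would translate the hypothesis of the corollary into numerical data. By Lemma \ref{ss=s,min}(ii), the failure of the equality $X(w_{s,r})^{ss}_{\lambda_{s}}(\mathcal{L}(m\omega_{r})) = X(w_{s,r})^{s}_{\lambda_{s}}(\mathcal{L}(m\omega_{r}))$ is equivalent to $\langle w_{s,r}(\omega_{r}),\lambda_{s}\rangle = 0$, and hence also $\langle w_{s,r}(m\omega_{r}),\lambda_{s}\rangle = 0$. This furnishes the numerical hypothesis of Lemma \ref{quotient is point}.

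Next, I would verify minimality. By Corollary \ref{unique min}, $w_{s,r}$ is the unique minimal element of $W^{S\setminus\{\alpha_{r}\}}$ for which $X(w_{s,r})^{ss}_{\lambda_{s}}(\mathcal{L}(m\omega_{r})) \neq \emptyset$; by Lemma \ref{main lemma} this is the same as $\langle w_{s,r}(m\omega_{r}),\lambda_{s}\rangle \le 0$. Consequently, any $v < w_{s,r}$ in $W^{S\setminus\{\alpha_{r}\}}$ must satisfy $\langle v(m\omega_{r}),\lambda_{s}\rangle > 0$, so in particular $v$ does not satisfy $\langle v(m\omega_{r}),\lambda_{s}\rangle = 0$. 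Combined with $\langle w_{s,r}(m\omega_{r}),\lambda_{s}\rangle = 0$ from the previous step, this shows $w_{s,r}$ is the (unique) minimal element of $W^{S\setminus\{\alpha_{r}\}}$ with $\langle w_{s,r}(m\omega_{r}),\lambda_{s}\rangle = 0$.

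Finally, invoking Lemma \ref{quotient is point} with $\chi = m\omega_{r}$ and $w = w_{s,r}$ yields that $\GmodX{\lambda_{s}}{X(w_{s,r})}{\mathcal{L}(m\omega_{r})}$ is a point. There is no real obstacle here; the whole argument is a bookkeeping chain through the preceding lemmas, with Lemma \ref{ss=s,min}(ii) doing the decisive work of converting the GIT hypothesis into the pairing identity, and Corollary \ref{unique min} providing the minimality needed to apply Lemma \ref{quotient is point}.
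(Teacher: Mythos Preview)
Your proof is correct and follows the paper's approach exactly: use Lemma~\ref{ss=s,min}(ii) to convert the hypothesis into $\langle w_{s,r}(\omega_{r}),\lambda_{s}\rangle=0$, then invoke Lemma~\ref{quotient is point}. You are simply more explicit than the paper in spelling out why $w_{s,r}$ satisfies the minimality hypothesis of Lemma~\ref{quotient is point} (the paper leaves this implicit, since $w_{s,r}$ is by construction the unique minimal element of $W^{S\setminus\{\alpha_{r}\}}$ admitting semistable points).
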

	\begin{proof}
		From Lemma \ref{ss=s,min}$(ii)$, we have $\langle w_{s,r}(\omega_{r}),\lambda_{s}\rangle=0$. Then, by Lemma \ref{quotient is point}, $\GmodX{\lambda_{s}}{X(w_{s,r})}{\mathcal{L}}$ is a point.
	\end{proof}
	\begin{lemma}
		Let $1\leq r,s\leq n$ and $\mathcal{L}=\mathcal{L}(m\omega_{r})$. If $\alpha_{s}$ is cominuscule and $X(w_{s,r})^{ss}_{\lambda_{s}}(\mathcal{L})= X(w_{s,r})^{s}_{\lambda_{s}}(\mathcal{L})$. Then 
		$\GmodX{\lambda_{s}}{X(w_{s,r})}{\mathcal{L}}$ is smooth.
		\label{smoothness in cominuscule}
	\end{lemma}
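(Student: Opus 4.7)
The plan is to apply Luna's étale slice theorem to the action of $\mathrm{Im}(\lambda_{s})\cong\mathbb{G}_m$ on the ambient Schubert variety $X(w_{s,r})$, exploiting three inputs: smoothness of $X(w_{s,r})$, a tight weight restriction coming from cominuscule-ness of $\alpha_s$, and the finite-stabilizer/closed-orbit consequences of the hypothesis $ss=s$.

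First, I would record that since $\omega_{r}$ is minuscule, every Schubert variety in $G/P_{S\setminus\{\alpha_{r}\}}$ is smooth, so $X(w_{s,r})$ is smooth. Next, the cominuscule hypothesis on $\alpha_s$ (coefficient of $\alpha_s$ in the highest root equals $1$) is equivalent to $\langle\beta,\lambda_{s}\rangle\in\{-1,0,1\}$ for every root $\beta\in R$. Consequently, at every $\lambda_{s}$-fixed point $y\in X(w_{s,r})^{\lambda_{s}}$, the tangent space decomposes as a $\lambda_{s}$-module into three pieces $T_{y}X(w_{s,r})=V_{-1}(y)\oplus V_{0}(y)\oplus V_{1}(y)$ of weights $-1,0,1$. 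Finally, the hypothesis $X(w_{s,r})^{ss}_{\lambda_{s}}(\mathcal{L})=X(w_{s,r})^{s}_{\lambda_{s}}(\mathcal{L})$ combined with the Hilbert-Mumford criterion ensures every semistable point has closed orbit in the semistable locus and finite stabilizer $H_{x}\subset\mathrm{Im}(\lambda_{s})\cong\mathbb{G}_m$.

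With this setup, Luna's étale slice theorem provides, for each $x\in X(w_{s,r})^{s}$, an étale neighborhood of $[x]$ in the quotient of the form $N_{x}/H_{x}$, where $N_{x}$ is the normal slice to the orbit $\mathbb{G}_m\cdot x$ in $X(w_{s,r})$. Because $X(w_{s,r})$ is smooth, $N_{x}$ is a smooth affine space, so smoothness of the quotient reduces to showing that $N_{x}/H_{x}$ is smooth for every such $x$. By the Chevalley-Shephard-Todd theorem, this is equivalent to $H_{x}=\mu_{n}$ acting on $N_{x}$ as a pseudo-reflection group.

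The main obstacle, and the technical heart of the argument, is the stabilizer/slice analysis. I would retract $x$ to a $\lambda_{s}$-fixed point $y$ in $\overline{\mathbb{G}_m\cdot x}$ via a Białynicki-Birula flow, so that the $H_{x}$-weights on $N_{x}$ are inherited from the $\lambda_{s}$-weights on $T_{y}X(w_{s,r})$ reduced modulo $n$; by the cominuscule step, these lie in $\{-1,0,1\}\bmod n$. The orbit tangent direction contributes a weight $\pm 1$ which is absorbed into the orbit, so on $N_{x}$ a generator $\zeta\in H_{x}$ fixes exactly the weight-$0$ subspace and acts by $\zeta^{\pm 1}$ on the remaining directions. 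Using the explicit description of $X(w_{s,r})$ and its tangent space at $w_{s,r}P/P$ from Section 5, I expect to see that this fixed subspace has codimension one in $N_{x}$, so $\zeta$ acts as a pseudo-reflection and $N_{x}/H_{x}$ is smooth. Assembling the local pictures yields smoothness of $\opsgmodx{m}$.
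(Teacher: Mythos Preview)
Your proposal overshoots the target. The paper's argument is much shorter because it proves something stronger: the $\lambda_{s}$-action on the semistable locus is actually \emph{free}, so Luna's slice theorem gives a smooth geometric quotient immediately, with no Chevalley--Shephard--Todd analysis needed.

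The two ingredients you are missing are these. First, by minimality of $w_{s,r}$, no proper sub-Schubert variety $X(v)\subsetneq X(w_{s,r})$ admits a semistable point; hence $X(w_{s,r})^{ss}_{\lambda_{s}}(\mathcal{L})\subseteq Bw_{s,r}P/P$, the open Bruhat cell. This already gives smoothness of the semistable locus (you do not need the global smoothness of minuscule Schubert varieties) and, more importantly, explicit affine coordinates. Second, in these coordinates one writes $x=u_{\beta_{1}}(t_{1})\cdots u_{\beta_{k}}(t_{k})w_{s,r}P/P$ with $\beta_{j}\in R^{+}(w_{s,r}^{-1})$ and $t_{j}\neq 0$, and $\lambda_{s}(a)$ acts by $t_{j}\mapsto a^{\langle\beta_{j},\lambda_{s}\rangle}t_{j}$. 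Since the $\beta_{j}$ are \emph{positive} roots and $\alpha_{s}$ is cominuscule, each $\langle\beta_{j},\lambda_{s}\rangle\in\{0,1\}$ (not $\{-1,0,1\}$ as in your tangent-space heuristic). Finiteness of the stabilizer forces some $\langle\beta_{j},\lambda_{s}\rangle=1$; but then $\lambda_{s}(a)\cdot x=x$ forces $a^{1}=1$, i.e.\ $a=1$. So every stabilizer is trivial.

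Your pseudo-reflection route is not only unnecessary but also incomplete as written. The assertion that the $H_{x}$-weights on the slice $N_{x}$ are the $\lambda_{s}$-weights on $T_{y}X(w_{s,r})$ reduced modulo $n$ requires justification you have not supplied: the Bia\l{}ynicki--Birula limit $y$ need not lie in the semistable locus, $\mathbb{G}_{m}$ does not act on $T_{x}$ since it does not fix $x$, and passing weights from $y$ back to $x$ is not automatic. Your final codimension-one claim is openly a hope (``I expect to see''), not an argument. Once you observe that the semistable locus sits in the big cell and read off the weights there, all of this machinery dissolves: the stabilizers are trivial and the quotient is smooth by Luna.
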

	\begin{proof}
		Since $X(w_{s,r})$  is the minimal dimensional Schubert variety admitting semistable point for $\lambda_{s}$, from Bruhat decomposition it follows that $X(w_{s,r})^{ss}_{\lambda_{s}}(\mathcal{L})\subseteq Bw_{s,r}P/P$. Thus, $X(w_{s,r})^{ss}_{\lambda_{s}}(\mathcal{L})$ is smooth open subset of $X(w_{s,r})$.			
		
		Since 	$X(w_{s,r})^{ss}_{\lambda_{s}}(\mathcal{L})=X(w_{s,r})^{s}_{\lambda_{s}}(\mathcal{L})$, for any point $x\in X(w_{s,r})^{ss}_{\lambda_{s}}(\mathcal{L})$, the $\lambda_{s}$-orbit in $X(w_{s,r})_{\lambda_{s}}^{ss}(\mathcal{L})$ is closed and the stabilizer of $x$ in $\lambda_{s}(G_{m})$ is finite.\\
		Choose a subset $\{\beta_{1},\beta_{2},\ldots,\beta_{k}\}$ of positive roots in $R^{+}(w_{s,r}^{-1})$ such that\[x=u_{\beta_{1}}(t_{1})u_{\beta_{2}}(t_{2})\cdots u_{\beta_{k}}(t_{k})w_{s,r}P/P\]
		with $u_{\beta_{j}}(t_{j})\in U_{\beta_{j}}$ and $t_{j}\neq0$ (see \cite[Theorem 28.4]{Hum2}). For $a\in G_{m}$, we have \[\lambda_{s}(a)\cdot x=u_{\beta_{1}}(a^{\langle \beta_{1},\lambda_{s}\rangle}t_{1})u_{\beta_{2}}(a^{\langle \beta_{2},\lambda_{s}\rangle}t_{2})\cdots u_{\beta_{k}}(a^{\langle \beta_{k},\lambda_{s}\rangle}t_{k})w_{s,r}P/P\]
		If $\langle \beta_{j}, \lambda_{s}\rangle=0$ for all $1\leq j\leq k$, then $a^{\langle \beta_{j}, \lambda_{s}\rangle}=1$ $\forall a\in G_{m}$. Thus $\lambda_{s}(a)\cdot x=x$, $\forall a\in G_{m}$. But stabilizer of $x$ in $\lambda_{s}(G_{m})$ is finite. Therefore, there is an integer $1\leq j\leq k$ such that $\langle \beta_{j}, \lambda_{s}\rangle\neq0$. Since $\alpha_{s}$ is cominuscule, we have $\langle \beta_{j}, \lambda_{s}\rangle=1$.\\
		Now, let $a\in G_{m}$ be such that $\lambda_{s}(a)\cdot x=x$. Then $a^{\langle \beta_{j}, \lambda_{s}\rangle}=1$ and hence $a=1$.\\
		Therefore, the action of $\lambda_{s}(G_{m})$ on $X(w_{s,r})_{\lambda_{s}}^{ss}(\mathcal{L})$ is free, and the quotient is geometric. Then the results follows from Luna's slice theorem (see \cite[Proposition 5.7]{Luna}).
	\end{proof}
	\subsection{Proof of Proposition 1.2} Let  $1\leq s, r\leq n$ be integers such that $\omega_{r}$ is minuscule and $\alpha_{s}$ is cominuscule. We write  $w=w_{s,r}$ and $\mathcal{L}=\mathcal{L}(\omega_{r})$. Let $k=l(w)$. \\
	Let $U_{w}:=\displaystyle\Pi_{\beta\in R^{+}(w^{-1})}U_{\beta}$. Note that the map $\psi:U_{w}\longrightarrow BwP/P\subseteq X(w)$ defined by $\psi(u)=u\cdot wP/P$ is an isomorphism of varieties. In particular, $BwP/P$ is isomorphic to the affine space $\mathbb{C}^{l(w)}$(see \cite[Ch 13, p. 385]{Jan}). Also note that $BwP/P=U_{w}wP/P$. Further, for any $t\in T$, $\psi(tut^{-1})=t\cdot \psi(u)$ $\forall t\in T, u\in U_{w}$.
	\\We denote the restriction of $\mathcal{L}$ to $BwP/P$ also by $\mathcal{L}$.
	
	Since $U_{w}$ is an affine space, $\mathbb{C}[U_{w}]$ is a unique factoriazation domain, Hence, $Pic(\mathbb{C}[U_{w}])$ is trivial (see \cite [Example 6.3.1, p. 132]{Har}). Therefore, $\psi^{*}(\mathcal{L})$ is the  trivial line bundle on $U_{w}$.
	Thus, we have $\psi^{*}(H^{0}(X(w),\mathcal{L}))$ is a subspace of $\mathbb{C}[U_{w}]$.\\
	Let $B_{w}:=TU_{w}$. Consider the acion of $B_{w}$ on $U_{w}$ by $th\cdot u=thut^{-1}$ $\forall t\in T$ and $h,u \in U_{w}$.
	Then $\psi$ is $B_{w}$ - equivariant, where the action of $B_{w}$ on $BwP/P$ is given by the left multiplication.\\
	Let $X_{\beta}$ be the coordinate function on $U_{w}$ corresponding to $U_{\beta}$, for $\beta\in R^{+}(w^{-1})$. That is, $X_{\beta}(\Pi_{\gamma\in R^{+}(w^{-1})}u_{\gamma}(a_{\gamma}))=a_{\beta}$ for  $(a_{\gamma})_{\gamma\in R^{+}(w^{-1})}\in \mathbb{C}^{k}$.
	\begin{remark}
		$X_{\beta}$'s does not depend on the ordering of $R^{+}(w^{-1})$.
	\end{remark}
	\begin{proof}
		Let $R^{+}(w^{-1})=\{\beta_{1},\ldots,\beta_{k}\}$.  Since $w^{-1}\in W^{S\setminus\{\alpha_{s}\}}$, $\alpha_{s}\leq \beta_{i}$ for $1\leq i\leq k$. Since $\alpha_{s}$ is cominuscule, $\beta_{i}+\beta_{j}$ is not a root for $1\leq i\neq j\leq k$. This implies $u_{\beta_{i}}(x_{\beta_{i}})u_{\beta_{j}}(x_{\beta_{j}})=u_{\beta_{j}}(x_{\beta_{j}})u_{\beta_{i}}(x_{\beta_{i}})$ for all $i\neq j$ and $x_{\beta_{i}}$, $x_{\beta_{j}}\in \mathbb{C}$ (see \cite[Proposition 8.2.3]{Springer}). Therefore, the product $\displaystyle\Pi_{\beta\in R^{+}(w^{-1})}u_{\beta}(x_{\beta})$ does not depend on the ordering of $\beta_{i}$'s.
	\end{proof}
	\begin{lemma}
		We have \begin{enumerate}
			\item $\psi^{*}(p_{w})$ is a non zero constant.
			\item For any $\beta\in R^{+}(w^{-1})$, $\psi^{*}(p_{s_{\beta}w})\in \mathbb{C}^{\times}\cdot X_{\beta}$.
		\end{enumerate}
		\label{restriction of line bundle to cell}
	\end{lemma}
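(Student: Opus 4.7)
The plan is to prove both parts together via a $T$-equivariance argument, reducing everything to a weight-space calculation in the polynomial ring $\mathbb{C}[U_w]=\mathbb{C}[X_{\beta_1},\ldots,X_{\beta_k}]$. For part $(1)$, I will show $p_w$ is nowhere zero on $BwP/P$. Up to scalar, $p_w$ is the dual basis vector of the $T$-weight vector $w\cdot v_\chi\in V(\omega_r)=H^{0}(G/P,\mathcal{L})^{*}$, where $v_\chi$ is a highest weight vector. For any $u=\prod u_{\beta_j}(a_j)\in U_w$, the vector $u\cdot w\cdot v_\chi$ equals $w\cdot v_\chi$ plus vectors of strictly higher $T$-weights (each $U_\beta$ with $\beta>0$ either kills or raises weight), so the coefficient of $w\cdot v_\chi$ in $u\cdot w\cdot v_\chi$ is identically $1$. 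Hence $\psi^{*}(p_w)$ is a nowhere-vanishing element of $\mathbb{C}[U_w]$, a unit in a polynomial ring, and therefore a nonzero constant. Rescaling the trivialization, I take $\psi^{*}(p_w)=1$.

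For part $(2)$, I first establish $\langle w(\omega_r),\beta\rangle=-1$ for every $\beta\in R^{+}(w^{-1})$. Minusculeness gives $\langle w(\omega_r),\beta\rangle=\langle\omega_r,w^{-1}(\beta)\rangle\in\{-1,0\}$ since $w^{-1}(\beta)<0$. The value $0$ is excluded: it would mean $-w^{-1}(\beta)\in R^{+}_{S\setminus\{\alpha_r\}}$, but then $w\in W^{S\setminus\{\alpha_r\}}$ would send it to the positive root $-\beta$, contradicting $\beta>0$. Consequently $s_\beta w(\omega_r)=w(\omega_r)+\beta$ is a genuinely different weight of $V(\omega_r)$, and $p_{s_\beta w}$ (interpreted as $p_v$ for the unique $v\in W^{S\setminus\{\alpha_r\}}$ with $v(\omega_r)=s_\beta w(\omega_r)$) is a well-defined nonzero section on $X(w)$.

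With the normalization $\psi^{*}(p_w)=1$, the $T$-equivariance of $\psi$ (from $tu_\gamma(a)t^{-1}=u_\gamma(\gamma(t)a)$) makes $\psi^{*}(p_{s_\beta w})$ a $T$-weight function on $U_w$ of weight $w(\omega_r)-s_\beta w(\omega_r)=-\beta$. Each $X_\gamma$ has weight $-\gamma$ under the same action, so the $(-\beta)$-weight space of $\mathbb{C}[U_w]$ is spanned by monomials $\prod X_{\beta_j}^{a_j}$ with $\sum a_j\beta_j=\beta$. By Corollary~\ref{unique pick}, $w^{-1}\in W^{S\setminus\{\alpha_s\}}$, and since $\alpha_s$ is cominuscule, every element of $R^{+}(w^{-1})\cup\{\beta\}$ has $\alpha_s$-coefficient exactly~$1$. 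Matching $\alpha_s$-coefficients on both sides of $\sum a_j\beta_j=\beta$ forces $\sum a_j=1$, so only the monomial $X_\beta$ survives. Hence $\psi^{*}(p_{s_\beta w})\in\mathbb{C}\cdot X_\beta$, and the scalar is nonzero because $p_{s_\beta w}$ is a nonzero section on the irreducible $X(w)$ whose dense open cell is $BwP/P$.

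The main delicate point is the weight exclusion $\langle w(\omega_r),\beta\rangle\neq 0$, which is where the hypothesis $w\in W^{S\setminus\{\alpha_r\}}$ is essential: without it, $p_{s_\beta w}$ could collapse to a scalar multiple of $p_w$, contradicting the claimed form. The cominuscule hypothesis on $\alpha_s$ is then the essential ingredient that pins the weight-$(-\beta)$ subspace of $\mathbb{C}[U_w]$ down to the single line $\mathbb{C}\cdot X_\beta$; without it one could, in principle, have several $\beta_j$'s summing up to $\beta$ in the $\alpha_s$-coefficient.
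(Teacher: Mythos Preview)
Your proof is correct and follows essentially the same route as the paper. For part~(2) the weight computation and the use of the cominuscule hypothesis on $\alpha_s$ to force $\sum a_j=1$ are identical to the paper's argument; you are in fact more careful than the paper in explicitly verifying $\langle w(\omega_r),\beta^\vee\rangle=-1$ (hence $s_\beta w(\omega_r)=w(\omega_r)+\beta$), which the paper uses without justification. For part~(1) there is a minor difference in style: the paper observes that $\mathbb{C}\cdot p_w$ is $B$-stable (hence $U_w$-stable) so $\psi^*(p_w)\in\mathbb{C}[U_w]^{U_w}=\mathbb{C}$, whereas you compute directly that the $w\cdot v_\chi$-coefficient of $u\cdot w\cdot v_\chi$ is identically $1$ and conclude via the unit argument in a polynomial ring---both are short and standard.
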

	\begin{proof}
		Proof of $(1):$ Note that the $p_{w}$ is a $T$-weight vector of weight  $-w(\omega_{r})$ . Further, any weight $\mu$ of $H^{0}(X(w),\mathcal{L}(\omega_{r}))$ satisfies $\mu\leq -w(\omega_{r})$ (see  \cite[Theorem 2.2, p. 168]{Seshadri}). The one dimensional subspace $\mathbb{C}\cdot p_{w}$ is $B$ - stable and the associated character is $-w(\omega_{r})$. Hence, $\mathbb{C}\cdot p_{w}$ is $B_{w}$- stable as well. Since $\psi$ is $B_{w}$ - equivariant, $\psi^{*}(H^{0}(X(w),\mathcal{L}(\omega_{r})))$  is a $B_{w}$-submodule of $\mathbb{C}[U_{w}]$. In particular, $\psi^{*}(p_{w})$ is $U_{w}$-invariant. Therefore, $\psi^{*}(p_{w})\in \mathbb{C}[U_{w}]^{U_{w}}=\mathbb{C}\cdot 1=$ the vector subspace of constant functions on $U_{w}$. \\	Proof of $(2):$ Let $R^{+}(w^{-1})=\{\beta_{1},\ldots,\beta_{k}\}$. 
		Fix $1\leq l\leq k$. Let $f\in \mathbb{C}[U_{w}]\setminus\{0\}$ be a $T$-weight vector of weight $-\beta_{l}$. Write $f=\displaystyle\sum_{(i_{1},\ldots,i_{k})\in \mathbb{Z}^{k}_{\ge 0}}a_{\underline{i}}X_{\beta_{1}}^{i_{1}}\cdots X_{\beta_{k}}^{i_{k}}$, $a_{\underline{i}}\in \mathbb{C}$. Note that $T$-weight of the monomial $X_{\beta_{1}}^{i_{1}}\cdots X_{\beta_{k}}^{i_{k}}$ is $-\displaystyle\sum_{j=1}^{k}i_{j}\beta_{j}$. Hence, for all  $a_{\underline{i}}\neq 0$, we have \begin{equation}
			\displaystyle\sum_{j=1}^{k} i_{j}\beta_{j}=\beta_{l}
		\end{equation}
		Since, $\beta_{j}\geq \alpha_{s}$ and  $\alpha_{s}$ is cominuscule, coefficient of $\alpha_{s}$ in the expression of $\beta_{j}$ is 1 for all $1\leq j\leq k$. Thus, $\langle \beta_{j},\lambda_{s}\rangle=1$ for all $1\leq j\leq k$. Therefore, from equation (6.2) $\langle\displaystyle\sum_{j=1}^{k}i_{j}\beta_{j},\lambda_{s}\rangle=\langle\beta_{l},\lambda_{s}\rangle$. 	Hence, for any 
		$\underline{i}\in \mathbb{Z}^{k}_{\ge 0}$ such that $a_{\underline{i}}\neq 0$, $\displaystyle\sum_{j=1}^{k} i_{j}=1$. Thus, $i_{j_{0}}=1$ for exactly one $j_{0}$ and $i_{j}=0$ for $j\neq j_{0}$. Therefore, $f=\displaystyle\sum_{i=1}^{k}a_{i}X_{\beta_{i}}$, $a_{i}\in \mathbb{C}$.
		Since $X_{\beta_{i}}$'s are weight vectors of distinct weight spaces,
		$\{X_{\beta_{i}}: 1\leq i\leq k\}$ is a linearly independent subset of $\mathbb{C}[U_{w}]$. Therefore, $f=c\cdot X_{\beta_{l}}$ for some $c\in \mathbb{C}^{\times}$.\\ 
		Let $\beta\in R^{+}(w^{-1})$. Since $p_{s_{\beta}w}$ is a $T$-weight vector in  $H^{0}(X(w),\mathcal{L}(\omega_{r}))$ of weight $-\beta-w(\omega_{r})$, $\psi^{*}(p_{s_{\beta}w})$ is a weight vector in $\mathbb{C}[U_{w}]$ of weight $-\beta$. Thus, we have $\psi^{*}(p_{s_{\beta}w})=c\cdot X_{\beta}$ for some $c\in \mathbb{C}^{\times}$	  
	\end{proof}
	\begin{proposition}
		Let $1\leq r,s\leq n$ and assume that $\omega_{r}$ is minuscule and $\alpha_{s}$ is cominuscule. Let $m\in \mathbb{N}$ be the least positive integer such that $\langle m\omega_{r},\lambda_{s}\rangle\in \mathbb{N}$, and let $a=-m\langle w(\omega_{r}),\lambda_{s}\rangle$. Assume that $X(w)^{ss}_{\lambda_{s}}(\mathcal{L}(m\omega_{r}))=X(w)^{s}_{\lambda_{s}}(\mathcal{L}(m\omega_{r}))$. Then, we have $\GmodX{\lambda_{s}}{X(w_{s,r})}{\mathcal{L}}\simeq (\mathbb{P}^{k-1},\mathbb{O}(a))$.
		\label{quotient is projective space}
	\end{proposition}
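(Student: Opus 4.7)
The plan is to identify the GIT quotient with the quotient of an affine space by the standard $G_m$-action of weight one. Since $X(w_{s,r})$ is the minimal-dimensional Schubert variety admitting $\lambda_s$-semistable points, Lemma \ref{main lemma} combined with the Bruhat decomposition forces $X(w)^{ss}_{\lambda_{s}}(\mathcal{L}(m\omega_{r})) \subseteq BwP/P$, so via $\psi$ the semistable locus is an open subset of $U_w \cong \mathbb{A}^k$. Because $\alpha_s$ is cominuscule and every $\beta \in R^{+}(w^{-1})$ satisfies $\beta \geq \alpha_s$, each such $\beta$ pairs with $\lambda_s$ as $\langle \beta, \lambda_s\rangle = 1$. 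Consequently, conjugation by $\lambda_s(a)$ sends $(t_\beta) \in U_w$ to $(a t_\beta)$; that is, $\lambda_s$ acts on $U_w \cong \mathbb{A}^k$ as the standard scalar multiplication of weight one.

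Next, I would compute the graded ring of $\lambda_s$-invariant sections. By Lemma \ref{restriction of line bundle to cell}$(1)$, $p_w^{dm}$ is a nowhere-vanishing section of $\mathcal{L}(dm\omega_r)$ on $BwP/P$, giving a trivialization and an injective restriction
\[
\rho_d \colon H^{0}(X(w), \mathcal{L}(dm\omega_{r})) \hookrightarrow \mathbb{C}[U_w]\cdot p_w^{dm} \;=\; \mathbb{C}[X_\beta : \beta \in R^{+}(w^{-1})]\cdot p_w^{dm}.
\]
The $T$-weight of $p_w^{dm}$ is $-dm\cdot w(\omega_r)$, so its $\lambda_s$-weight is $-dm\langle w(\omega_r), \lambda_s\rangle = da$, while each $X_\beta$ carries $\lambda_s$-weight $-1$. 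Hence $\rho_d$ sends $\lambda_s$-invariants injectively into the subspace of homogeneous polynomials of degree $da$ in the $X_\beta$, times $p_w^{dm}$.

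The heart of the argument is surjectivity onto $\mathbb{C}[X_\beta]_{da}$. Given a monomial $\prod_{j} X_{\beta_j}^{a_j}$ with $\sum_j a_j = da$, I would invoke Lemma \ref{restriction of line bundle to cell}$(2)$ to write each $X_{\beta_j}$ (up to a nonzero scalar) as $\psi^{*}(p_{s_{\beta_j}w})/\psi^{*}(p_w)$. Then
\[
\Bigl(\prod_{j} X_{\beta_j}^{a_j}\Bigr)\cdot p_w^{dm} \;=\; c\cdot \Bigl(\prod_{j} p_{s_{\beta_j}w}^{a_j}\Bigr)\cdot p_w^{dm-da}
\]
as rational sections. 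By Lemma \ref{ss=s,min}$(i)$ we have $-1 < \langle w(\omega_r), \lambda_s\rangle \leq 0$, so $0 \leq a < m$ and $dm-da \geq 0$; the right-hand side is then an honest product of Plücker sections, i.e.\ a global section of $\mathcal{L}(dm\omega_r)$ on $G/P$, whose restriction to $X(w)$ realizes the desired invariant.

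Putting these together yields $\bigoplus_{d \geq 0} H^{0}(X(w), \mathcal{L}(dm\omega_{r}))^{\lambda_{s}} \cong \bigoplus_{d \geq 0} \mathbb{C}[X_1,\ldots,X_k]_{da}$, the $a$-th Veronese subring of $\mathbb{C}[X_1,\ldots,X_k]$, whose Proj is $(\mathbb{P}^{k-1}, \mathcal{O}(a))$. I expect the surjectivity step to be the main obstacle: producing a global section from a purely local monomial depends essentially on Lemma \ref{restriction of line bundle to cell}$(2)$ to express each coordinate as a ratio of Plücker sections, together with the bound $a < m$ from Lemma \ref{ss=s,min}$(i)$ to ensure that $p_w^{dm-da}$ appears with a nonnegative exponent so that the overall expression is a regular section rather than merely a rational one.
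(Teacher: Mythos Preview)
Your proposal is correct and follows essentially the same route as the paper's proof: both identify $\psi^{*}$ of the $\lambda_s$-invariant sections with the degree-$da$ homogeneous forms in the $X_\beta$ by exhibiting the explicit global sections $\prod_j p_{s_{\beta_j}w}^{a_j}\cdot p_w^{dm-da}$ (the paper writes $a_1=a$, $a_2=m-a$) and then take $\mathrm{Proj}$ of the resulting Veronese subring. One small sharpening: under the hypothesis $X(w)^{ss}=X(w)^{s}$, Lemma~\ref{ss=s,min}(ii) gives $\langle w(\omega_r),\lambda_s\rangle\neq 0$, so in fact $0<a<m$ (the paper uses this to get $a_1,a_2>0$), and by Lemma~\ref{restriction of line bundle to cell}(2) one has $\psi^{*}(p_{s_\beta w})\in\mathbb{C}^{\times}\cdot X_\beta$ directly, so the division by $\psi^{*}(p_w)$ is unnecessary (though harmless, since it is a nonzero constant).
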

	\begin{proof}
		Let $a_{1}=a$, $a_{2}=m+\langle w(m\omega_{r}),\lambda_{s}\rangle$ and $\mathcal{L}=\mathcal{L}(m\omega_{r})$. Since  $X(w)^{ss}_{\lambda_{s}}(\mathcal{L})=X(w)^{s}_{\lambda_{s}}(\mathcal{L})$,
		from Lemma \ref{ss=s,min} $(i)$, we have $-m<\langle w(m\omega_{r}),\lambda_{s}\rangle<0$. This implies $a_{1},a_{2}>0$. Therefore, $\lambda_{s}$-weight of $p_{{s_{\beta}}w}^{a_{1}}p_{w}^{a_{2}}$ is  	
		\begin{equation}
			-\langle(a_{1}s_{\beta}w(\omega_{r})+a_{2}w(\omega_{r})),\lambda_{s}\rangle
			=-\langle a_{1}(w(\omega_{r})+\beta)+a_{2}w(\omega_{r}),\lambda_{s}\rangle
			=-(a_{1}+a_{2})\langle w(\omega_{r}),\lambda_{s}\rangle-a_{1},
		\end{equation} which is equal to zero.
		Let $d\in \mathbb{Z}_{\geq 0}$ and $(m_{1},\ldots,m_{k+1})\in \mathbb{Z}_{\geq 0}^{k+1}$ such that $\displaystyle\sum_{i=1}^{k+1}m_{i}=dm$. Then by equation (6.3), $\displaystyle\prod_{i=1}^{k}p_{s_{\beta_{i}}w}^{m_{i}}p_{w}^{m_{k+1}}\in H^{0}(X(w),\mathcal{L}(dm\omega_{r}))^{\lambda_{s}}$ if and only if $\displaystyle\sum_{i=1}^{k}m_{i}=da_{1}$ and $m_{k+1}=da_{2}$.	\\
		Hence, we have $\psi^{*}(H^{0}(X(w),\mathcal{L}(dm\omega_{r}))^{\lambda_{s}})=\mathbb{C}[U_{w}]_{da_{1}}$= space of all homogeneous forms of degree $da_{1}$ in $X_{\beta_{1}},\ldots,X_{\beta_{k}}$. Further the following diagram is commutative :\\
		\begin{tikzcd}
			H^{0}(X(w),\mathcal{L}(d_{1}m\omega_{r}))^{\lambda_{s}}\otimes H^{0}(X(w),\mathcal{L}(d_{2}m\omega_{r}))^{\lambda_{s}} \arrow[r] \arrow[d, "\psi^{*}\otimes\psi^{*}"]
			& H^{0}(X(w),\mathcal{L}((d_{1}+d_{2})m\omega_{r}))^{\lambda_{s}} \arrow[d, "\psi^{*}"] \\
			\mathbb{C}[U_{w}] \otimes \mathbb{C}[U_{w}]\arrow[r]
			&  \mathbb{C}[U_{w}]
		\end{tikzcd}
		
		where the horizontal arrows are multiplication maps and $d_{1},d_{2}\in \mathbb{Z}_{\geq 0}$.

		Let $V=\displaystyle\sum_{i=1}^{k}\mathbb{C}X_{\beta_{i}}\subseteq \mathbb{C}[U_{w}]$. Thus,
		
		\begin{tikzcd}
			\displaystyle\bigoplus_{d\in \mathbb{Z}_{\geq 0}}H^{0}(X(w),\mathcal{L}(dm\omega_{r}))^{\lambda_{s}}\arrow[r,"\psi^{*}"] &  \displaystyle\bigoplus_{d\in \mathbb{Z}_{\geq 0}}\mathbb{C}[U_{w}]_{da_{1}}=\bigoplus_{d\in \mathbb{Z}_{\geq 0}}Sym^{da_{1}}(V)
			& 
		\end{tikzcd} 
		
		is an isomorphism of graded $\mathbb{C}$-algebras.
		Now, the proof of the proposition follows from the fact that $Proj(\bigoplus_{d\in \mathbb{Z}_{\geq 0}}Sym^{da_{1}}(V))=(\mathbb{P}^{k-1},\mathbb{O}(a_{1}))$.
	\end{proof}
	\section{GIT quotient of $X(w_{{s,r}})$ in $G_{r,n}$}
	In this section, we prove Theorem \ref{main result} and give a decomposition of $H^{0}(X(w_{s,r}),\mathcal{L}(km\omega_{r}))^{n\cdot\lambda_{s}}$ into irreducible $SL(s-p)\times SL(r-p)$-modules for all $k$. 
	\\
	Let $\hat{G}=SL(n,\mathbb{C})$ and $\hat{T}$ denotes maximal torus consisting of diagonal matrices in $\hat{G}$. For $1\leq s\leq n-1$ and $t\in G_{m}$, we have  \[(n\cdot\lambda_{s})(t)=diag(t^{n-s},\ldots, t^{n-s},t^{-s},t^{-s},\ldots,t^{-s})\] 
	(first $s$ many are $t^{n-s}$).\\
	Let $w_{s,r}$ and $p$ as in Lemma \ref{criterion An}. In one line notation, we have \[w_{s,r}=(1,2,\ldots,p-1,p,s+1,s+2,\ldots,s+r-p)\]
	Let $w=w_{s,r}$. Recall that the stabilizer of $X(w)$ in $\hat{G}$ is the parabolic subgroup $P_{w}=P_{S\setminus\{\alpha_{p},\alpha_{s+r-p}\}}$.
	The semisimple part of the Levi subgroup of $C_{P_{w}}(n\cdot \lambda_{s})$ is isomorphic to $SL(s-p,\mathbb{C})\times SL(r-p,\mathbb{C})$.
	Let $G^{\prime}=SL(s-p)\times SL(r-p)$.  
	Let $B_{1}$ (respectively, $B_{2}$) be the Borel subgroup consisting of upper triangular matrices in $SL(s-p,\mathbb{C})$ (respectively, $SL(r-p,\mathbb{C})$). Let $T_{1}$ (respectively, $T_{2}$) be the maximal torus consisting of diagonal matrices in $SL(s-p,\mathbb{C})$ (respectively, $SL(r-p,\mathbb{C})$). Then, $B_{1}\times B_{2}$ is a Borel subgroup of $SL(s-p,\mathbb{C})\times SL(r-p,\mathbb{C})$ containing $T_{1}\times T_{2}$.
	Let $c=\frac{rs}{(rs,n)}$,  $m=\frac{n}{(rs,n)}$, where $(rs,n)$ denotes the GCD of $rs$ and $n$. Also let $a=c-mp$.
	\begin{theorem} Assume that $1\leq r,s\leq n-1$, and $n\nmid rs$. Then we have\\
		$\GmodX{n\cdot\lambda_{s}}{X(w_{s,r})}{\mathcal{L}(m\omega_{r})}=(\mathbb{P}(M(s-p,r-p)) ,\mathbb{O}(a))$.
		\label{git quotient type An}
	\end{theorem}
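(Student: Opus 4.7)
The theorem will follow by combining Proposition~\ref{quotient is projective space} with an equivariant identification of the resulting projective space with $\mathbb{P}(M(s-p,r-p))$ under the natural $G':=SL(s-p,\mathbb{C})\times SL(r-p,\mathbb{C})$-action.

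First, I will verify the hypotheses of Proposition~\ref{quotient is projective space}. In type $A_{n-1}$ every fundamental weight is minuscule and every simple root is cominuscule (the highest root is $\alpha_1+\cdots+\alpha_{n-1}$), so both minuscule/cominuscule hypotheses hold. Using the expression of $\omega_r$ in terms of simple roots one computes $\langle \omega_r,\lambda_s\rangle\in\{r(n-s)/n,\,s(n-r)/n\}$, and in either case the least positive integer $m$ making $m\langle\omega_r,\lambda_s\rangle\in\mathbb{N}$ is exactly $m=n/(rs,n)$. By Lemma~\ref{criterion An} together with the computation at the end of its proof, $\langle w_{s,r}(\omega_r),\lambda_s\rangle=-rs/n+p$, which is nonzero precisely because $n\nmid rs$; hence Lemma~\ref{ss=s,min}$(ii)$ gives $X(w)^{ss}_{\lambda_s}(\mathcal{L}(m\omega_r))=X(w)^{s}_{\lambda_s}(\mathcal{L}(m\omega_r))$. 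Also $n\cdot\lambda_s$ and $\lambda_s$ have the same image in $\hat G$, so the GIT quotients for the two one-parameter subgroups coincide. Proposition~\ref{quotient is projective space} then yields $\GmodX{\lambda_s}{X(w)}{\mathcal{L}(m\omega_r)}\simeq(\mathbb{P}^{k-1},\mathcal{O}(a))$ with $k=\ell(w_{s,r})=(s-p)(r-p)$ and $a=-m\langle w(\omega_r),\lambda_s\rangle=c-mp$, matching the $a$ in the statement.

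Second, I will unpack the geometry of $U_w$ to exhibit the claimed matrix structure. Writing $w_{s,r}$ in one-line notation as the minimal coset representative in $W^{S\setminus\{\alpha_r\}}$ with image $\{1,\ldots,p,s+1,\ldots,s+r-p\}$, a direct check shows
\[
R^+(w_{s,r}^{-1})=\{\beta_{i,j}:=e_{p+i}-e_{s+j}:1\le i\le s-p,\ 1\le j\le r-p\},
\]
and the coordinate functions $X_{i,j}:=X_{\beta_{i,j}}$ arrange naturally into an $(s-p)\times(r-p)$ matrix, inducing an isomorphism $U_w\simeq M(s-p,r-p)$ via $u\mapsto(X_{i,j}(u))$. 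Since $\alpha_s$ is cominuscule, no $\beta_{i,j}+\beta_{i',j'}$ is a root, so the root subgroups $U_{\beta_{i,j}}$ pairwise commute (as already used in Section 6), which is what legitimizes this matrix identification independent of ordering.

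Third, and this is the crux, I will track the $G'$-action. The standard Levi factor $SL(s-p,\mathbb{C})$ acts by permuting (more generally, acting linearly on) the indices $\{p+1,\ldots,s\}$ while fixing $\{s+1,\ldots,s+r-p\}$, and $SL(r-p,\mathbb{C})$ does the opposite; since $\hat T$ acts on $X_{i,j}$ with weight $-(e_{p+i}-e_{s+j})$, the conjugation action of $G'$ on $U_w$ turns the space $V=\mathrm{span}\{X_{i,j}\}$ into the standard $G'$-module $M(s-p,r-p)^{*}$ with $(g_1,g_2)\cdot M=g_1Mg_2^{-1}$. Moreover $\psi:U_w\to BwP/P$ is $G'$-equivariant for this action by construction. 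From Proposition~\ref{quotient is projective space}'s proof, $\bigoplus_d H^0(X(w),\mathcal{L}(dm\omega_r))^{\lambda_s}\simeq\bigoplus_d\mathrm{Sym}^{da}(V)$ as graded $\mathbb{C}$-algebras, and the above makes this an isomorphism of graded $G'$-algebras; taking Proj delivers $(\mathbb{P}(M(s-p,r-p)),\mathcal{O}(a))$ as a $G'$-variety, proving the theorem.

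The main obstacle is the last step: Proposition~\ref{quotient is projective space} already identifies the underlying polarized scheme, so the genuinely new content is matching the $G'$-action on the coordinate algebra with the standard matrix-multiplication action. The argument must use the commutativity of the $U_{\beta_{i,j}}$'s (hence the unambiguous matrix indexing), together with the observation that the roots $\beta_{i,j}=e_{p+i}-e_{s+j}$ are precisely those that transform under the standard representations of the two Levi factors, so the linear span $V$ is canonically $\mathbb{C}^{s-p}\otimes(\mathbb{C}^{r-p})^{*}$ with its natural $G'$-structure.
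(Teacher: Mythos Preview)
Your proof is correct and starts from the same place as the paper---both invoke Proposition~\ref{quotient is projective space} to obtain $(\mathbb{P}^{k-1},\mathcal{O}(a))$ with $k=(s-p)(r-p)$---but you then pin down the $G'$-structure by a different, more explicit route. You compute $R^{+}(w_{s,r}^{-1})=\{e_{p+i}-e_{s+j}:1\le i\le s-p,\ 1\le j\le r-p\}$ directly, observe that $G'$ normalizes $U_w$ and fixes $wP/P$ (so its action on $BwP/P$ is conjugation on $U_w$), and recognize the abelian $U_w\cong\mathfrak{u}_w$ as the standard bimodule $M(s-p,r-p)$; this makes the graded-algebra isomorphism from the proof of Proposition~\ref{quotient is projective space} visibly $G'$-equivariant. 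The paper instead argues indirectly: it shows neither factor of $G'$ fixes any semistable point, deduces that every irreducible $G'$-summand $V_1\otimes V_2\subseteq H^{0}(\mathbb{P}^{k-1},\mathcal{O}(1))$ has both tensorands nontrivial, uses the dimension inequality $\dim(V_1\otimes V_2)\ge (s-p)(r-p)=k$ to force $V_1\otimes V_2=H^{0}(\mathbb{P}^{k-1},\mathcal{O}(1))$, and then computes one explicit $T_1\times T_2$-weight to decide between the standard representation and its dual on each factor. Your argument is more elementary and self-contained; the paper's is more representation-theoretic and would transplant to situations where the inversion set is less transparent. One small point worth making explicit: the claim ``$\psi$ is $G'$-equivariant by construction'' needs the two-line check that $G'$ normalizes $U_w$ and that $w^{-1}G'w\subset P$ (both immediate from your index description, since $w^{-1}$ sends $\{p+1,\dots,s\}$ into $\{r+1,\dots,n\}$ and $\{s+1,\dots,s+r-p\}$ into $\{1,\dots,r\}$), as $G'\not\subset B$ and so $G'$-stability of $BwP/P$ is not automatic.
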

	
	\begin{proof}
		\textbf{Step 1: }$SL(s-p)\times 1$ and $1\times SL(r-p)$ does not fix any point in $X(w_{s,r})^{ss}_{n\cdot\lambda_{s}}(\mathcal{L}(m\omega_{r}))$.\\ Let $w=w_{s,r}$. Let $u\in U_{w}\setminus\{1\}$. Let $R_{1}^{+}=\{\beta\in R^{+}(w^{-1}): \beta\geq \alpha_{s-1}\}$. If $u\in \prod_{\beta\in R^{+}(w^{-1})\setminus R_{1}^{+}}U_{\beta}$, then $u_{\alpha_{s-1}}(1)$ does not fix $uwP/P$. If  $u\notin \prod_{\beta\in R^{+}(w^{-1})\setminus R_{1}^{+}}U_{\beta}$, then there exists $t\in T_{1}$ such that  $tuwP/P\neq uwP/P$. Therefore, $SL(s-p)\times 1$ does not fix any point in $X(w_{s,r})^{ss}_{n\cdot\lambda_{s}}(\mathcal{L}(m\omega_{r}))$. Similarly, $1\times SL(r-p)$ does not fix any point in $X(w_{s,r})^{ss}_{n\cdot\lambda_{s}}(\mathcal{L}(m\omega_{r}))$.\\ Let  $V_{1}\otimes V_{2}\subseteq H^{0}(\mathbb{P}^{k-1},\mathbb{O}(1))$ be an irreducible $SL(s-p)\times SL(r-p)$-module.
		
		\textbf{Step 2:} Both $V_{1}$ and $V_{2}$ are non trivial. If $V_{1}$ is trivial, then since
		$\GmodX{n\cdot\lambda_{s}}{X(w_{s,r})}{\mathcal{L}(m\omega_{r})}=(\mathbb{P}^{k-1},\mathbb{O}(a))$, $SL(s-p)\times 1$ fixes a point $y$ in $\GmodX{n\cdot\lambda_{s}}{X(w_{s,r})}{\mathcal{L}(m\omega_{r})}$. \\Let $\pi:X(w_{s,r})^{ss}_{n\cdot\lambda_{s}}(\mathcal{L}(m\omega_{r}))\longrightarrow \GmodX{n\cdot\lambda_{s}}{X(w_{s,r})}{\mathcal{L}(m\omega_{r})}$ be the GIT morphism. Let $x\in X(w_{s,r})^{ss}_{n\cdot\lambda_{s}}(\mathcal{L}(m\omega_{r}))$ be such that $\pi(x)=y$. Then \begin{equation}
			(SL(s-p)\times \{1\})\cdot x\subseteq \pi^{-1}(\{y\})= (n\cdot\lambda_{s}(G_{m}))\cdot x
		\end{equation}
		Let $H:=n\cdot\lambda_{s}(G_{m})/(n\cdot\lambda_{s}(G_{m}))\cap Z(SL(n,\mathbb{C}))$. Since  $n\cdot\lambda_{s}(G_{m})$ commutes with $SL(s-p)\times 1$, using (7.1), we get a homomorphism $f:SL(s-p)\times 1\longrightarrow H$.
		Since the character group of $SL(s-p)\times 1$ is trivial, $f$ is trivial. Therefore, $SL(s-p)\times 1$ fixes $x$. This is a contradiction to Step 1. Similarly, we can  prove that $V_{2}$ is non trivial.\\ Since both $V_{1}$ and $V_{2}$ are non trivial, $dim(V_{1}\otimes V_{2})\geq dim (V(\omega_{1})\otimes V(\omega_{1}))$. Further, equality holds only when $V_{1}=V(\omega_{1})$ or $V_{1}=V(\omega_{1})^{*}$ and  $V_{2}=V(\omega_{1})$ or $V_{2}=V(\omega_{1})^{*}$.
		
		On the other hand, since $dim(V_{1}\otimes V_{2}))=(s-p)(r-p)=k$, we have $V_{1}\otimes V_{2}=H^{0}(\mathbb{P}^{k-1},\mathbb{O}(1))$.	
		Further,	$p_{(1,2,\ldots,p,s,s+2,s+3,\ldots,s+r-p)}^{a}p_{(1,2,\ldots,p,s+1,s+2,\ldots,s+r-p)}^{m-a}$ is an $n\cdot \lambda_{s}$-invariant section of $\mathcal{L}(m\omega_{r})$. Further $T_{1}\times T_{2}$-weight of the above section is $(-w_{0,s}(a\omega_{1}), a\omega_{1})$, where $w_{0,s}$ is the longest element of Weyl group of $SL(s-p,\mathbb{C})$ relative to $T_{1}$.\\
		Therefore, $(-w_{0,s}(\omega_{1}),\omega_{1})$ is a weight of $H^{0}(\mathbb{P}^{k-1},\mathbb{O}(1))$. Therefore, $(\omega_{1},-w_{0,r}(\omega_{1}))$ is a weight of $H^{0}(\mathbb{P}^{k-1},\mathbb{O}(1))^{*}$. Hence, $V_{1}=V(\omega_{1})$ and $V_{2}=V(-w_{0,r}(\omega_{1}))$, where $w_{0,r}$ is the longest element of Weyl group of $SL(r-p)$ relative to $T_{2}$ .	\\
		Therefore, $H^{0}(\mathbb{P}^{k-1},\mathbb{O}(1))^{*}=V(\omega_{1})\otimes V(-w_{0,r}(\omega_{1}))$ as $SL(s-p)\times SL(r-p)$-module. This completes the proof.
	\end{proof}
	\begin{lemma}	
		Let $d\in \mathbb{N}$,  and $(m_1,\ldots,m_{n-1})\in \mathbb{Z}_{\geq0}^{n-1}$. Let 	$\mu=\displaystyle\sum_{i=1}^{n-1} m_{i}\omega_{i}$. Then $\mu\leq d\omega_{1}$ if and only if  there is a non negative integer $m_{n}$ such that $\displaystyle\sum_{i=1}^{n} m_{i} i=d$, where $\omega_{1}$ is the first fundamental weight of $SL(n,\mathbb{C})$.
		\label{weight An}
	\end{lemma}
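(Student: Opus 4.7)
The plan is to perform a direct computation in the standard $\varepsilon$-coordinates for the weight lattice of $SL(n,\mathbb{C})$. In these coordinates one has $\omega_{i}=\varepsilon_{1}+\cdots+\varepsilon_{i}$ and $\alpha_{j}=\varepsilon_{j}-\varepsilon_{j+1}$, subject to the relation $\varepsilon_{1}+\cdots+\varepsilon_{n}=0$. Setting $b_{j}=\sum_{i=j}^{n-1}m_{i}$, one can write $\mu=(b_{1},b_{2},\ldots,b_{n-1},0)$, and note that $\sum_{j=1}^{n-1}b_{j}=\sum_{i=1}^{n-1}i\,m_{i}$.

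First I would observe that $d\omega_{1}-\mu$ lies in the root lattice if and only if $n$ divides $d-\sum_{i=1}^{n-1}i\,m_{i}$, which is equivalent to the existence of an integer $m_{n}$ satisfying $\sum_{i=1}^{n}i\,m_{i}=d$. Granting this, I would lift $d\omega_{1}-\mu$ to its unique zero-sum representative $(d-b_{1}-m_{n},\,-b_{2}-m_{n},\,\ldots,\,-b_{n-1}-m_{n},\,-m_{n})\in \mathbb{Z}^{n}$.

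Next I would invoke the elementary fact that any $x=(x_{1},\ldots,x_{n})\in\mathbb{Z}^{n}$ with $\sum x_{i}=0$ has the expansion $x=\sum_{j=1}^{n-1}c_{j}\alpha_{j}$ with $c_{j}=x_{1}+\cdots+x_{j}$. Applied to the lift above, together with the bookkeeping identity $\sum_{i=1}^{j}b_{i}=\sum_{k=1}^{j}k\,m_{k}+j\sum_{k=j+1}^{n-1}m_{k}$, a short calculation yields the clean closed form
\[
c_{j}=\sum_{k=j+1}^{n}(k-j)\,m_{k}, \qquad 1\le j\le n-1.
\]

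Once this formula is in hand, both directions of the equivalence fall out immediately. If $m_{n}\ge 0$, then since all $m_{k}\ge 0$ and $k-j\ge 0$ for $k\ge j+1$, every $c_{j}$ is non-negative, so $\mu\le d\omega_{1}$. Conversely, if $\mu\le d\omega_{1}$ then $d\omega_{1}-\mu$ lies in the root lattice (forcing $m_{n}\in\mathbb{Z}$) and all $c_{j}\ge 0$; specializing to $j=n-1$ gives $c_{n-1}=m_{n}\ge 0$. The only step that requires any care is the derivation of the closed-form expression for $c_{j}$, but this is routine bookkeeping; the argument has no serious obstacle once the right coordinates are chosen.
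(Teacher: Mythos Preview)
Your proof is correct and follows a genuinely different route from the paper's. The paper proves the forward direction $(\Rightarrow)$ by induction on $ht(d\omega_{1}-\mu)$, invoking a covering result from \cite{Stem 2} to produce a positive root $\beta$ with $\mu+\beta$ dominant and $\mu+\beta\le d\omega_{1}$, and then checks that the invariant $\sum_{i} i\,m_{i}$ is preserved along the way; for the backward direction $(\Leftarrow)$ it uses the inequality $\omega_{r}\le r\omega_{1}$ (proved via $\varepsilon$-coordinates) and sums. By contrast, you compute the simple-root coefficients of $d\omega_{1}-\mu$ explicitly once and for all, obtaining $c_{j}=\sum_{k=j+1}^{n}(k-j)m_{k}$, from which both directions are immediate. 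Your argument is more elementary and entirely self-contained (no appeal to Stembridge's lemma), and the closed form for the $c_{j}$ is a mild bonus; the paper's argument is more structural but relies on an external reference for the key inductive step.
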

	\begin{proof}(:$\Rightarrow$)
		Assume that $\mu\leq\ d\omega_{1}$. If $\mu=d\omega_{1}$, then take $m_{n}=0$, and the result holds. Now let $\mu < d\omega_{1}$, then there exists some positive root $\beta$ such that $\mu < \mu+\beta \leq d\omega_{1}$, and $\mu+\beta$ is dominant (see \cite[Corollary 2.7, p.349]{Stem 2}). We have $\beta= \alpha_{i}+\alpha_{i+1}+\cdots+\alpha_{j}$, for some $1\leq i\leq j < n$. Note that $\beta=-(\omega_{i-1}+\omega_{j+1})+(\omega_{i}+\omega_{j})$. We will prove the result by induction on height of $d \omega_{1}-\mu$.\\ $\mu+\beta=m_{1}\omega_{1}+..+(m_{i-1}-1)\omega_{i-1}+(m_{i}+1)\omega_{i}+\cdots+(m_{j}+1)\omega_{j}+(m_{j+1}-1)\omega_{j+1}+..+m_{n-1}\omega_{n-1}=\displaystyle\sum_{i=1}^{n-1} m^\prime_{i}\omega_{i}$ (say). Then, we have $\displaystyle\sum_{i=1}^{n-1} m^\prime_{i}i=\displaystyle\sum_{i=1}^{n-1} m_{i}i$.\\
		But $ht( d \omega_{1}-(\mu+\beta))< ht(d\omega_{1}-\mu)$ and by induction there exists $m^\prime_{n}\geq 0$ such that $\displaystyle\sum_{i=1}^{n} m^\prime_{i}i=d$. Hence, we have $\displaystyle\sum_{i=1}^{n-1} m^\prime_{i}i$+$m^\prime_{n}n=d$. Thus, we have $\displaystyle\sum_{i=1}^{n-1} m_{i}i+m^\prime_{n}n=d$.\\
		($\Leftarrow:$)	To prove the converse, we first claim that $\omega_{r}\leq r \omega_{1}$ for all $r=1,2,\ldots,n-1.$ Indeed, $r \omega_{1}-\omega_{r}=r\epsilon_{1}-(\epsilon_{1}+\cdots+\epsilon_{r})$=$(\epsilon_{1}-\epsilon_{2})+\cdots+(\epsilon_{1}-\epsilon_{r})\geq 0$.\\
		Now $\displaystyle\sum_{i=1}^{n-1} m_{i} \omega_{i}\leq \displaystyle\sum_{i=1}^{n-1} m_{i}(i\omega_{1})+m_{n}(n\omega_{1})$=$(\displaystyle\sum_{i=1}^{n} m_{i}i)\omega_{1}$=$d \omega_{1}$, and hence $\mu\leq d \omega_{1}$.\\
	\end{proof}	
	Let $n\leq m$. Consider $M(n,m)$ as $GL(n,\mathbb{C}) \times GL(m,\mathbb{C})$-module, where the action is $(A,B)\phi=A\phi B^{-1}$, for $\phi\in M(n,m)$, $A\in GL(n,\mathbb{C})$ and $B\in GL(m,\mathbb{C})$. 	
	We recall definition of diagram from  (see \cite[section A, p.132]{Eis}). For a tableau $\Gamma$, we  denote by $|\Gamma|$  the Young diagram associated to $\Gamma$. We think of $\Gamma$ as a way of filling in the 	\noindent ``boxes of $|\Gamma|$" with numbers from $1$ and $n$. We recall some  more notations and definition from \cite{Eis} (see \cite[p.138, 146]{Eis}).\\
	A tableaux $\Gamma$ is called standard if rows of $\Gamma$'s are strictly increasing sequences and columns are non-decreasing sequences.
	If $\sigma$ is a diagram of at most $n$ columns then there are two special standard tableaux of shape $\sigma$ defined as follows.
	\begin{definition}
		\begin{enumerate}
			\item (Canonical) The $i^{th}$ row of the canonical tableau $C_{\sigma}$ is defined  to be $(1,2,\ldots,\sigma_{i})$, where $\sigma_{i}$ denotes the length of the $i^{th}$ row of the diagram $\sigma$.
			\item (Anticanonical) The $i^{th}$ row of the anticanonical tableau $\overline{C_{\sigma}}$ is defined to be $(n-\sigma_{i}+1,\ldots,n-1,n)$.
		\end{enumerate}
	\end{definition}
	\begin{definition}(Double tableaux)
		A double (standard) tableaux is a pair $(\Gamma_{1}|\Gamma_{2})$ of (standard) tableaux, with $|\Gamma_{1}|=|\Gamma_{2}|$. \end{definition}
	We use $(\Gamma_{1}|\Gamma_{2})$ to indicate products of minors of a matrix : Let $X=(X_{ij})$ be  an $n\times m$ matrix with $n\leq m$ and $|\Gamma_{1}|=|\Gamma_{2}|$ has at most $n$ columns. Let $|\Gamma|_{i}$, denotes the length of the $i^{th}$ row of $|\Gamma|$. Then we associate to $(\Gamma_{1}|\Gamma_{2})$ the product of minors of X whose $i^{th}$ factor is the minor involving rows $\Gamma_{1}(i,1), \Gamma_{1}(i,2),\ldots,\Gamma_{1}(i,|\Gamma_{1}|_{i})$ and columns $\Gamma_{2}(i,1), \Gamma_{2}(i,2),\ldots,\Gamma_{2}(i,|\Gamma_{2}|_{i})$. Thus the $i^{th}$ factor is a minor of order $|\Gamma|_{i}$.\\
	Given tableaux $\Gamma_{1}$ and $\Gamma_{2}$ of the same shape, we write $(\Gamma_{1}|\Gamma_{2})$ for the double tableaux formed form $\Gamma_{1}$ and $\Gamma_{2}$. We say a double tableaux is right  (respectively, left) semi-canonical if it has the form $(\Gamma_{1}|C_{\sigma})$ (respectively, $(C_{\sigma}|\Gamma_{2})$. Let $L_{\sigma}$ and ${_\sigma}L$ be the spaces spanned respectively by all the right and left semi-canonical tableaux of shape $\sigma$.\\
	We recall the following results from \cite{Eis} (see \cite[Theorem 3.3, Corollary 3.4 and comment at the end of p.131]{Eis}).	
	\begin{theorem}
		\begin{enumerate}
			\item $L_{\sigma}$ is an  irreducible $GL(n,\mathbb{C})$-submodule of $\mathbb{C}[M(n,m)]$.
			\item $L_{\sigma}\otimes {_\sigma}L$ is an irreducible  $GL(n,\mathbb{C}) \times GL(m,\mathbb{C})$-module, and \[\mathbb{C}[M(n,m)]=\displaystyle\bigoplus_{\sigma\in\Gamma} (L_{\sigma}\otimes {_\sigma}L)\] where $\Gamma$ is the set of distinct diagrams $\sigma$ with at most $n$ columns.
		\end{enumerate}
	\end{theorem}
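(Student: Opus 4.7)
My plan is to prove the decomposition by establishing that standard bideterminants form a basis of $\mathbb{C}[M(n,m)]$ and then identifying each factor $L_{\sigma}$ with an irreducible $GL(n,\mathbb{C})$-module via classical highest weight theory.

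First I would observe that $\mathbb{C}[M(n,m)]$ is spanned by products of matrix entries $X_{ij}$, each of which is a $1\times 1$ minor, so bideterminants of all shapes span. The technical heart is the straightening algorithm (Doubilet--Rota--Stein, equivalent to Hodge's standard monomial theory for Grassmannians): any bideterminant can be rewritten as a $\mathbb{Z}$-linear combination of standard bideterminants, using the Pl\"ucker-type quadratic identities among minors. A leading-term argument under a suitable monomial order on products of matrix entries then shows linear independence. Together this produces a basis of $\mathbb{C}[M(n,m)]$ indexed by pairs $(\Gamma_{1},\Gamma_{2})$ of standard tableaux of common shape $\sigma$, with entries in $\{1,\ldots,n\}$ and $\{1,\ldots,m\}$ respectively.

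Second, since left multiplication by $A \in GL(n,\mathbb{C})$ acts only on the row indices of each minor factor, the span $L_{\sigma}$ of right semi-canonical tableaux $(\Gamma_{1}|C_{\sigma})$ is $GL(n,\mathbb{C})$-stable; symmetrically ${_\sigma}L$ is $GL(m,\mathbb{C})$-stable. To prove irreducibility of $L_{\sigma}$, I would exhibit $(\overline{C_{\sigma}}|C_{\sigma})$ as a highest weight vector: it is a weight vector for the diagonal torus of $GL(n,\mathbb{C})$ with weight determined by the shape $\sigma$, and via the straightening relations one checks that any upper-triangular root vector annihilates it (the resulting non-standard tableau straightens to zero). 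This identifies $L_{\sigma}$ with the Schur module $\mathbb{S}^{\sigma}(\mathbb{C}^{n})$, which is irreducible by classical theory. Then the modules $L_{\sigma}\otimes {_\sigma}L$ are pairwise non-isomorphic irreducible $GL(n,\mathbb{C})\times GL(m,\mathbb{C})$-modules, distinguished by their highest weights, and their sum embeds into $\mathbb{C}[M(n,m)]$.

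Equality follows by comparing graded dimensions via the Cauchy identity $\prod_{i,j}(1-x_{i}y_{j})^{-1}=\sum_{\sigma}s_{\sigma}(x)s_{\sigma}(y)$, whose left side is the bigraded Hilbert series of $\mathbb{C}[M(n,m)]$ and whose right side equals $\sum_{\sigma}\dim L_{\sigma}\cdot \dim {_\sigma}L$ at each degree. The principal obstacle is the rigorous verification of the straightening algorithm, which is a delicate combinatorial argument; once the basis of standard bitableaux is in hand the representation-theoretic part is formal. An alternative route avoiding straightening would be a Peter--Weyl style decomposition of $\mathbb{C}[GL(n)\times GL(m)/P]$ for a suitable parabolic, but that sacrifices the explicit tableau description needed for the applications in this paper.
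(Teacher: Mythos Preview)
Your outline is essentially the DeConcini--Eisenbud--Procesi argument, but note that the paper does not prove this theorem at all: it is \emph{quoted} as a known result from \cite{Eis} (see the sentence immediately preceding the statement, ``We recall the following results from \cite{Eis}\ldots''). There is therefore no ``paper's own proof'' to compare against; the authors are simply importing Theorem~3.3 and Corollary~3.4 of \cite{Eis} as black boxes for use in Section~7.

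That said, your sketch is an accurate summary of what actually happens in \cite{Eis}: the straightening law establishes the standard bitableau basis, and the identification of $L_{\sigma}$ with a Schur module via the highest weight vector $(\overline{C_{\sigma}}|C_{\sigma})$ gives irreducibility. One small caution: you invoke the Cauchy identity to conclude that the direct sum exhausts $\mathbb{C}[M(n,m)]$, but once the standard bitableau basis is in hand this is immediate from partitioning the basis by shape---no separate dimension count is needed. The Cauchy identity is rather a \emph{consequence} of the decomposition than an ingredient in its proof. Your alternative route via a Peter--Weyl decomposition is also viable but, as you note, loses the explicit tableau calculus that the paper relies on later (e.g.\ in identifying the highest weight of $L_{\sigma}\otimes{_\sigma}L$ in the proof of Lemma~\ref{type An 1}).
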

	\begin{proposition}
		For a fixed $d\in \mathbb{N}$, $Sym^{d}(M(n,m))^{*}=\displaystyle\bigoplus_{\sigma\in\Gamma_{d}} (L_{\sigma}\otimes {_\sigma}L)$, where $\Gamma_{d}$ is the set of all diagrams $\sigma$ with atmost $n$ columns such that $\displaystyle\sum_{i} \sigma_{i}=d$.\label{decom symm}
	\end{proposition}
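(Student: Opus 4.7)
The plan is to extract Proposition \ref{decom symm} from the preceding decomposition theorem by observing that the decomposition respects the natural polynomial grading on $\mathbb{C}[M(n,m)]$.

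First I would identify $\mathbb{C}[M(n,m)]$ with the symmetric algebra $Sym^{\bullet}(M(n,m)^{*}) = \displaystyle\bigoplus_{d\geq 0}Sym^{d}(M(n,m)^{*})$ via the standard coordinate functions $X_{ij}$, which give it the structure of a polynomial ring graded by total degree. This grading is preserved by the $GL(n,\mathbb{C})\times GL(m,\mathbb{C})$ action, since $A\phi B^{-1}$ is linear in the entries of $\phi$.

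Next, for each diagram $\sigma$, I would observe that every element of $L_{\sigma}\otimes {_\sigma}L$ is a linear combination of double standard tableaux $(\Gamma_{1}|\Gamma_{2})$ of shape $\sigma$, and by the interpretation recalled in the excerpt, such a double tableau is a product of minors whose $i^{th}$ factor is a minor of order $|\Gamma|_{i}=\sigma_{i}$. Since a minor of order $\sigma_{i}$ is homogeneous of degree $\sigma_{i}$ in the entries $X_{ij}$, the product has total degree $\sum_{i}\sigma_{i}$. Hence $L_{\sigma}\otimes {_\sigma}L$ is contained in the homogeneous component $Sym^{\sum_{i}\sigma_{i}}(M(n,m)^{*})$.

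Finally, intersecting the decomposition $\mathbb{C}[M(n,m)] = \displaystyle\bigoplus_{\sigma\in\Gamma}(L_{\sigma}\otimes {_\sigma}L)$ with the $d^{\text{th}}$ graded piece yields
\[
Sym^{d}(M(n,m)^{*}) = \displaystyle\bigoplus_{\sigma\in\Gamma_{d}}(L_{\sigma}\otimes {_\sigma}L),
\]
where the sum is restricted to those $\sigma\in\Gamma$ with $\sum_{i}\sigma_{i}=d$, which is precisely the definition of $\Gamma_{d}$. The only mild subtlety is making sure the graded pieces are consistent under the isomorphism $M(n,m) \cong M(n,m)^{*}$ (both are isomorphic as $GL(n)\times GL(m)$-modules up to duality, and the convention in the decomposition theorem identifies $\mathbb{C}[M(n,m)]$ with the symmetric algebra of the dual); no real obstacle arises, since the grading and the group action are both respected.
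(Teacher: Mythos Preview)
The paper does not actually supply its own proof of this proposition: it is listed, together with the preceding theorem, among the results ``recalled from \cite{Eis}'' (with a pointer to Theorem 3.3, Corollary 3.4, and the comment at the end of p.~131 there). Your derivation is correct and is exactly the standard way to extract the graded statement from the full decomposition $\mathbb{C}[M(n,m)]=\bigoplus_{\sigma\in\Gamma}(L_{\sigma}\otimes{_\sigma}L)$: each double tableau of shape $\sigma$ is a product of minors of orders $\sigma_{1},\sigma_{2},\ldots$, hence homogeneous of degree $\sum_{i}\sigma_{i}$ in the entries $X_{ij}$, so the decomposition respects the polynomial grading and restricts to the asserted identity on the $d^{\text{th}}$ graded piece. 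In effect you have written out the argument the paper leaves to the reference.
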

	
	\subsection{The decomposition of homogeneous coordinate ring}
	\ 
	
	Let $R_{k}:= H^{0}(X(w_{s,r}),\mathcal{L}(km\omega_r))^{n\cdot\lambda_{s}}$. 
	We  decompose $R_{k}$ into irreducible $G^{\prime}$-modules.  We give the description of $R_{k}$ into irreducible $G^{\prime}$-modules in terms of the dominant weights relative to $(B_{1}\times B_{2},T_{1}\times T_{2})$.
	
	We first treat the case $s\leq r$.
	
	Let $\mathcal{D}_{ka}$ denotes the set of all diagrams $\sigma=(\sigma_{1}\geq \sigma_{2}\geq \cdots\geq\sigma_{q}) (q=length(\sigma))$ such that $\sigma_{1}\leq s-p$ and $\sum_{i=1}^{q}\sigma_{i}=ka$.
	
	Let $\{\omega_{i}:1\leq i\leq s-p-1)\}$ (resp. $\{\omega_{i}^{\prime}: 1\leq i\leq r-p-1\})$ denotes the fundamental weights of $SL(s-p)$ (resp. of $SL(r-p)$).
	For each $\sigma\in\mathcal{D}_{ka}$,  let $\mu_{\sigma}$ denote the dominant weight corresponding to $\sigma$ considered as weight for $SL(s-p)$.
	\begin{lemma}
		Let $\mu$ be a dominant weight of $T_{1}$. Then $\mu\leq ka\omega_{1}$ if and only if there exists unique $\sigma\in \mathcal{D}_{ka}$ such that $\mu_{\sigma}=\mu$.
		\label{type An weight inequality}
	\end{lemma}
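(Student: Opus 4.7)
The plan is to reduce the statement directly to Lemma \ref{weight An} (applied with $n = s-p$ and $d = ka$) via the elementary bookkeeping bijection between partitions with parts bounded by $s-p$ and nonnegative tuples of multiplicities $(m_1,\ldots,m_{s-p})$.

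First I would set up the bijection. Given $\sigma = (\sigma_1 \geq \cdots \geq \sigma_q) \in \mathcal{D}_{ka}$, let $m_i$ be the number of parts of $\sigma$ equal to $i$ for $1 \leq i \leq s-p$. Then $\sum_{i=1}^{s-p} i\, m_i = |\sigma| = ka$. In $SL(s-p)$ the weight $\omega_{s-p}$ is trivial, so the dominant weight associated to $\sigma$ takes the form $\mu_{\sigma} = \sum_{i=1}^{s-p-1} m_i \omega_i$. Conversely, any dominant weight $\mu$ of $T_1$ has the shape $\mu = \sum_{i=1}^{s-p-1} m_i \omega_i$ for unique $m_i \in \mathbb{Z}_{\geq 0}$.

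For the forward direction, given $\mu \leq ka\omega_1$, Lemma \ref{weight An} applied to $SL(s-p)$ produces a nonnegative integer $m_{s-p}$ with $\sum_{i=1}^{s-p} i\, m_i = ka$. Take $\sigma$ to be the partition with $m_i$ parts of size $i$ for each $1 \leq i \leq s-p$; then $\sigma_1 \leq s-p$ and $|\sigma| = ka$, so $\sigma \in \mathcal{D}_{ka}$, and by construction $\mu_{\sigma} = \mu$. For the reverse direction, starting from $\sigma \in \mathcal{D}_{ka}$ the identity $\sum_{i=1}^{s-p} i\, m_i = ka$ is precisely the hypothesis of Lemma \ref{weight An}, yielding $\mu_{\sigma} \leq ka\omega_1$.

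Finally, uniqueness is automatic: the coefficients $m_1, \ldots, m_{s-p-1}$ are read off $\mu$ unambiguously, and then $m_{s-p}$ is forced by the linear equation $\sum_{i=1}^{s-p} i\, m_i = ka$. There is no real obstacle here beyond correctly identifying the convention by which a partition with parts of size at most $s-p$ encodes an $SL(s-p)$-dominant weight (the ``extra'' multiplicity $m_{s-p}$ being absorbed by $\omega_{s-p} = 0$); the combinatorial content has already been supplied by Lemma \ref{weight An}.
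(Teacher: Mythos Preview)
Your proposal is correct and follows essentially the same approach as the paper's own proof: both directions are reduced to Lemma~\ref{weight An} (with $n=s-p$, $d=ka$) via the multiplicity bijection $m_i=\#\{j:\sigma_j=i\}$, and uniqueness is obtained by observing that $m_1,\ldots,m_{s-p-1}$ are determined by $\mu$ while $m_{s-p}$ is forced by $\sum_{i=1}^{s-p} i\,m_i=ka$. The only cosmetic difference is that the paper writes out the partition explicitly as $((s-p)^{n_{s-p}},\ldots,1^{n_1})$ and spells out the uniqueness computation $(s-p)n_{s-p}=(s-p)n_{s-p}'$, whereas you summarize this in one line.
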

	\begin{proof}
		($\Leftarrow:$) Let $\sigma\in \mathcal{D}_{ka}$ be such that $\mu_{\sigma}=\mu$. Then we have, $\sum_{j} \sigma_{j}=ka$ and $\sigma_{1}\leq s-p$. For  $1\leq i\leq s-p$, let $m_{i}$=$|\{j: \sigma_{j}=i\}|$. Then 
		$\mu_{\sigma}$ = $\displaystyle\sum_{i=1}^{s-p-1}m_{i}\omega_{i}$ is a  dominant weight of $T_{1}$. Also we have,  $\displaystyle\sum_{i=1}^{s-p}m_{i}\cdot i=ka$. Therefore, from Lemma \ref{weight An}, we have $\mu_{\sigma}\leq ka\omega_{1}$.\\
		($:\Rightarrow$) Conversely, let $\mu\leq ka\omega_{1}$ be a dominant weight of $T_{1}$. Let $\mu= \displaystyle\sum_{i=1}^{s-p-1}n_{i}\omega_{i}$, $n_{i}\geq 0$. Then by Lemma \ref{weight An}, there exists $n_{s-p}\geq 0$ such that $\displaystyle\sum_{i=1}^{s-p}n_{i}\cdot i=ka$. Consider the diagram $\sigma$ corresponding to the partition $((s-p)^{n_{s-p}},(s-p-1)^{n_{s-p-1}},\ldots,2^{n_{2}},1^{n_{1}})$ of $ka$. Then $\sigma\in \mathcal{D}_{ka}$ and $\mu_{\sigma}=\mu$.\\
		If $\sigma^{\prime}\in \mathcal{D}_{ka}$ be such that $\mu_{{\sigma}^{\prime}} =\mu$. Let $n_{i}^{\prime}$=$|\{j: \sigma_{j}^{\prime}=i\}|$. Then $ \mu_{{\sigma}^{\prime}}=\displaystyle\sum_{i=1}^{s-p-1}n_{i}^{\prime}\omega_{i}$. Since $\mu_{{\sigma}^{\prime}} =\mu_{\sigma}$, we have $n_{i}=n_{i}^{\prime} $ for all $1\leq i\leq s-p-1$. Also $\sum\sigma_{i}=\sum{\sigma_{i}}^{\prime}=ka$. This implies $(s-p)
		\cdot n_{s-p}=(s-p)\cdot n_{s-p}^{\prime}$. Since $p<s$, $n_{s-p}=n_{s-p}^{\prime}$. Hence, $\sigma=\sigma^{\prime}$.
	\end{proof}
	\begin{lemma} For a dominant weight $\mu$ of $T_{1}$, let V($\mu$) be the irreducible $SL(s-p)$-module with highest weight $\mu$. Let $\Bar{\mu}$  be the dominant weight of $T_{2}$ determined by $\mu$. Then $V(\mu)^{*}\otimes V(\Bar{\mu})$ is an irreducible $G^{\prime}$-module with highest weight $(-w_{0,s}(\mu),\bar{\mu})$, where $w_{0,s}$ is the longest element of the Weyl group of $SL(s-p)$. Then we have \\
		$R_{k}
		=\displaystyle\bigoplus_{\mu\leq ka\omega_{1}}(V(\mu)^{*}\otimes V(\Bar{\mu}))$, where $\omega_{1}$ is the first fundamental weight of $SL(s-p)$.
		\label{type An 1}
	\end{lemma}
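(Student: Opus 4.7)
The plan is to use Theorem~\ref{git quotient type An} to reduce the computation of $R_{k}$ to the homogeneous coordinate ring of the polarised variety $(\mathbb{P}(M(s-p,r-p)),\mathcal{O}(a))$, and then to invoke the classical Cauchy decomposition recorded in Proposition~\ref{decom symm}.

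First I would observe that, under the hypothesis $X(w_{s,r})^{ss}_{n\lambda_{s}}(\mathcal{L}(m\omega_{r}))=X(w_{s,r})^{s}_{n\lambda_{s}}(\mathcal{L}(m\omega_{r}))$, the graded ring $\bigoplus_{k\geq 0} R_{k}$ is the homogeneous coordinate ring of $\opsgmodx{m}$ for its natural polarisation. The isomorphism supplied by Theorem~\ref{git quotient type An} is $C_{\hat{G}}(n\lambda_{s})$-equivariant, and in particular $G^{\prime}$-equivariant. Consequently, as $G^{\prime}$-modules,
\[
R_{k}\;\cong\; H^{0}\bigl(\mathbb{P}(M(s-p,r-p)),\mathcal{O}(ka)\bigr)\;=\;\mathrm{Sym}^{ka}\bigl(M(s-p,r-p)\bigr)^{*},
\]
where the $G^{\prime}$-action is induced from the natural rule $(A,B)\cdot\phi=A\phi B^{-1}$ on $M(s-p,r-p)$.

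Next I would apply Proposition~\ref{decom symm} with $(n,m)=(s-p,r-p)$ (valid since we are in the case $s\leq r$) and $d=ka$, obtaining
\[
\mathrm{Sym}^{ka}\bigl(M(s-p,r-p)\bigr)^{*}\;=\;\bigoplus_{\sigma\in\Gamma_{ka}}\bigl(L_{\sigma}\otimes {_{\sigma}L}\bigr),
\]
where $\Gamma_{ka}$ is the set of Young diagrams $\sigma$ with at most $s-p$ columns and $|\sigma|=ka$. This is exactly the set $\mathcal{D}_{ka}$ introduced earlier, and by Lemma~\ref{type An weight inequality} the assignment $\sigma\mapsto \mu_{\sigma}$ is a bijection of $\mathcal{D}_{ka}$ onto the set of dominant weights $\mu$ of $T_{1}$ with $\mu\leq ka\omega_{1}$.

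The last step is the identification of each summand $L_{\sigma}\otimes {_{\sigma}L}$ with $V(\mu_{\sigma})^{*}\otimes V(\bar{\mu}_{\sigma})$. Writing $M(s-p,r-p)^{*}\cong (\mathbb{C}^{s-p})^{*}\otimes\mathbb{C}^{r-p}$ and applying the Cauchy identity, the $\sigma$-summand becomes $S^{\sigma}(\mathbb{C}^{s-p})^{*}\otimes S^{\sigma}(\mathbb{C}^{r-p})$; restriction to $SL(s-p)\times SL(r-p)$ yields an irreducible with highest weight $(-w_{0,s}(\mu_{\sigma}),\bar{\mu}_{\sigma})$, matching the highest weight of $V(\mu_{\sigma})^{*}\otimes V(\bar{\mu}_{\sigma})$ exactly. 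I expect the principal obstacle to lie in this final bookkeeping: the action rule $(A,B)\cdot\phi=A\phi B^{-1}$ forces the $SL(s-p)$-factor to act on coordinate functions through its inverse, producing the dual $V(\mu_{\sigma})^{*}$ on the first factor, while the $SL(r-p)$-factor acts without inversion and produces $V(\bar{\mu}_{\sigma})$ on the second. Once this convention issue is sorted out, the direct sum indexed by $\sigma\in\mathcal{D}_{ka}$ can be rewritten as a direct sum indexed by dominant weights $\mu\leq ka\omega_{1}$ via Lemma~\ref{type An weight inequality}, giving the claimed decomposition of $R_{k}$.
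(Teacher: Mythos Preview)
Your proposal is correct and follows essentially the same route as the paper: reduce to $\mathrm{Sym}^{ka}(M(s-p,r-p))^{*}$ via Theorem~\ref{git quotient type An}, decompose via Proposition~\ref{decom symm}, and reindex by dominant weights $\mu\leq ka\omega_{1}$ using Lemma~\ref{type An weight inequality}. The only difference is in the identification of each summand $L_{\sigma}\otimes{_{\sigma}L}$ with $V(\mu_{\sigma})^{*}\otimes V(\bar{\mu}_{\sigma})$: the paper pins down the highest weight by exhibiting the explicit highest weight vectors $(\overline{C_{\sigma}}|C_{\sigma})$ and $(C_{\sigma}|C_{\sigma})$ from \cite[Corollary~3.4(2)]{Eis} and reading off their $T_{1}\times T_{2}$-weights, whereas you argue more abstractly through the identification $M(s-p,r-p)^{*}\cong(\mathbb{C}^{s-p})^{*}\otimes\mathbb{C}^{r-p}$ and the Cauchy formula. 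Both are sound; the paper's explicit computation has the advantage of leaving no ambiguity about which factor carries the dual, which is precisely the bookkeeping issue you yourself flag as the main obstacle.
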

	\begin{proof} From Theorem \ref{git quotient type An} we have, $R_{k}=Sym^{ka}(M(s-p,r-p))^{*}$. Using Proposition \ref{decom symm} for $n=s-p$ and $m=r-p$ we have, $Sym^{ka}(M(s-p,r-p))^{*}=\displaystyle\bigoplus_{\sigma\in \mathcal{D}_{ka}} (L_{\sigma}\otimes{ _{\sigma}L})$.
		
		Now in view of Lemma \ref{type An weight inequality},  it suffices to prove that, for $\sigma\in \mathcal{D}_{ka}$,  $L_{\sigma}\otimes {_\sigma}L$ is isomorphic to $V(\mu_{\sigma})^{*}\otimes V(\Bar{{\mu}_{\sigma}})$ considered as $G^{\prime}$-module.	From \cite[Theorem 3.3, p.147]{Eis},  $L_{\sigma}$ is an irreducible $GL(s-p,\mathbb{C})$-module and hence irreducible considering as $SL(s-p,\mathbb{C})$-module. Similarly, ${ _{\sigma}L}$ is an irreducible $SL(r-p,\mathbb{C})$-module. Further, $(\overline{C_{\sigma}}|C_{\sigma})$ (resp. $(C_{\sigma}|C_{\sigma})$) is a highest weight vector of $L_{\sigma} $ (resp. ${ _{\sigma}L}$) (see \cite[Corollary 3.4 (2), p.148]{Eis}). Now $T_{1}$-weight of $(\overline{C_{\sigma}}|C_{\sigma})$ is $-w_{0,s}(\mu_{\sigma})$ and $T_{2}$-weight of $(C_{\sigma}|C_{\sigma})$ is $\bar{\mu_{\sigma}}$ .\\
		This proves that $T_{1}\times T_{2}$-weight of $(\overline{C_{\sigma}}|C_{\sigma})\otimes(C_{\sigma}|C_{\sigma})$ is $(-w_{0,s}(\mu_{\sigma}),\bar{\mu_{\sigma}})$. Since $V(\mu)^{*}=V(-w_{0,s}(\mu))$ for any dominant character $\mu$ of $T_{1}$ (see \cite[Corollary 2.5, p.200]{Jan}), we have the desired conclusion.
	\end{proof}	
	
	\begin{remark}
		Let  $s>r$. Let $\mathcal{D}^{\prime}_{ka}$ denotes the set of diagrams $\sigma=(\sigma_{1}\geq\sigma_{2}\geq\cdots\geq\sigma_{q})(q=length(\sigma))$ such that $\sigma_{1}\leq r-p$ and $\sum_{i=1}^{q}\sigma_{i}=ka$. Then by Proposition \ref{decom symm}, 
		$Sym^{ka}(M(s-p,r-p))^{*}=\displaystyle\bigoplus_{\sigma\in \mathcal{D}^{\prime}_{ka}} (L_{\sigma}\otimes {_{\sigma}L})$. A similar argument as in Lemma \ref{type An weight inequality} proves that, if  $\mu$ is a dominant weight of $T_{2}$, then $\mu\leq ka\omega_{1}^{\prime}$ if and only if there is a unique $\sigma\in \mathcal{D}^{\prime}_{ka}$ and $\mu_{\sigma}=\mu$. Then computation of highest weight proves that 
		$R_{k}=\displaystyle\bigoplus_{\mu\leq ka\omega_{1}^{\prime}}(V(\bar{\mu})^{*}\otimes V(\mu))$, where $\mu$ is dominant weight of $T_{2}$ and $\bar{\mu}$ is determined by $\mu$.
			\end{remark}
	\begin{corollary}
		For $s=r$, we have $H^{0}(X(w_{r,r}),\mathcal{L}(km\omega_r))^{n\cdot\lambda_{r}}=H^{0}(\overline{PSL(r-p)},\mathcal{L}(ka\omega_{1}))$.
	\end{corollary}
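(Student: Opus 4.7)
The plan is to derive the corollary as a direct specialization of Lemma \ref{type An 1}. Setting $s=r$ makes the two factors $SL(s-p,\mathbb{C})$ and $SL(r-p,\mathbb{C})$ of the group $G^{\prime}$ coincide, so the tori $T_{1}$ and $T_{2}$ are canonically identified. Under this identification, the recipe used in the proof of Lemma \ref{type An 1} to pass from a diagram $\sigma\in\mathcal{D}_{ka}$ to the dominant weights $\mu_{\sigma}$ of $T_{1}$ and $\bar{\mu}_{\sigma}$ of $T_{2}$ yields the same dominant weight in both cases. Consequently the decomposition in Lemma \ref{type An 1} simplifies to
\[
R_{k}\;=\;\bigoplus_{\mu\leq ka\omega_{1}}V(\mu)^{*}\otimes V(\mu),
\]
where $\mu$ runs over dominant weights of a single copy of $SL(r-p,\mathbb{C})$ and $\omega_{1}$ is its first fundamental weight.

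I would then identify the right-hand side with $H^{0}(\overline{PSL(r-p)},\mathcal{L}(ka\omega_{1}))$ by invoking the De~Concini--Procesi theorem on sections of line bundles on the wonderful compactification of an adjoint semisimple group: for a dominant weight $\lambda$ the space $H^{0}(\overline{G},\mathcal{L}(\lambda))$ decomposes as a $G\times G$-module as $\bigoplus_{\mu\leq\lambda}V(\mu)^{*}\otimes V(\mu)$, summed over dominant weights $\mu$. Applied with $G=PSL(r-p,\mathbb{C})$ and $\lambda=ka\omega_{1}$, this is precisely the sum above. Equivalently, one may argue directly via the Cauchy formula: $\bigoplus_{d\ge 0}\mathrm{Sym}^{d}(M(r-p,r-p)^{*})$ coincides with $\bigoplus_{\lambda}V(\mu_{\lambda})^{*}\otimes V(\mu_{\lambda})$ summed over partitions $\lambda$ with at most $r-p$ parts, and under the $a$-uple Veronese polarization this recovers the displayed decomposition.

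The step I expect to require the most care is the matching of gradings and polarizations: one must check that Theorem \ref{git quotient type An} gives the GIT quotient as $(\mathbb{P}(M(r-p,r-p)),\mathcal{O}(a))$, that the section ring of $(\overline{PSL(r-p)},\mathcal{L}(a\omega_{1}))$ coincides with $\bigoplus_{k}\mathrm{Sym}^{ka}(M(r-p,r-p)^{*})$ via the $SL(r-p,\mathbb{C})\times SL(r-p,\mathbb{C})$-equivariant natural map $\overline{PSL(r-p)}\to\mathbb{P}(M(r-p,r-p))$, and that the identification $\bar{\mu}_{\sigma}=\mu_{\sigma}$ is compatible with both sides as $G'$-modules. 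Everything else is a formal consequence of Lemma \ref{type An 1} and the classical Peter--Weyl-type decomposition.
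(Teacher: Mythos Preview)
Your proposal is correct and follows essentially the same route as the paper: specialize Lemma \ref{type An 1} to $s=r$ so that $\bar{\mu}=\mu$ and $R_{k}=\bigoplus_{\mu\leq ka\omega_{1}}V(\mu)^{*}\otimes V(\mu)$, then invoke the De~Concini--Procesi description of $H^{0}(\overline{PSL(r-p)},\mathcal{L}(ka\omega_{1}))$. Your caution about matching polarizations is unnecessary here, since the corollary only asserts an isomorphism of $G^{\prime}$-modules for each $k$, not an identification of polarized varieties.
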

	\begin{proof}
		From Lemma \ref{type An 1}, $R_{k}=\displaystyle\bigoplus_{\mu\leq ka\omega_{1}}(V(\mu)^{*}\otimes V(\mu))=\displaystyle\bigoplus_{\mu\leq ka\omega_{1}}End(V(\mu)^{*})$. By part 1 of  \cite[Theorem 8.3, p.30]{De processi} for the special case of the wonderful compactification $\overline{PSL(r-p)}$ of $PSL(r-p)$, we have  $H^{0}(X(w_{r,r}),\mathcal{L}(km\omega_r))^{n\cdot\lambda_{r}}=H^{0}(\overline{PSL(r-p)},\mathcal{L}(ka\omega_{1}))$.
	\end{proof}
	\subsection{Acknowledgement}
	We thank Parameswaran Sankaran for useful discussions. We thank Infosys foundation for partial financial support. The first named author thank the NBHM for financial support.
		
\end{document}